%%%%%%%%%%%%%%%%%%%%%%%%%%%%%%%%%%%%%%%%%%%%%%%%%%%%%%%%%%%%%%
%
%  This is a LaTeX file containing the paper
%
%
%
%  by Richard Ehrenborg and JiYoon Jung
%
%
%  Last edited: May 23, 2012
%
%%%%%%%%%%%%%%%%%%%%%%%%%%%%%%%%%%%%%%%%%%%%%%%%%%%%%%%%%%%%%%

%%%%%%%%%%%%%%%%%%%%%%%%%%%%%%%%%%%%%%%%%%%%%%%%%%%%%%%%%%%%%%
%
%  pdf settings
%
%
\pdfpagewidth=8.5truein
\pdfpageheight=11truein
%
%%%%%%%%%%%%%%%%%%%%%%%%%%%%%%%%%%%%%%%%%%%%%%%%%%%%%%%%%%%%%%

\documentclass[11pt]{article}
\usepackage{latexsym}
\usepackage{amsmath}
\usepackage{amsthm}
\usepackage{amssymb}
\usepackage[pdftex]{color}

%%%%% Here is the command to make the graphics package
%%%%% to work with pdfLaTeX
\ifx\pdftexversion\undefined
  \usepackage[dvips]{graphics}
\else
  \usepackage[pdftex]{graphics}
\fi
%%%%%

\setlength{\topmargin}{ -1.5cm}
\setlength{\oddsidemargin}{ -0.5cm}
\textwidth 17cm
\textheight 22.4cm

\parskip6pt

\newtheorem{theorem}{Theorem}[section]
\newtheorem{corollary}[theorem]{Corollary}
\newtheorem{lemma}[theorem]{Lemma}
\newtheorem{proposition}[theorem]{Proposition}

\newtheorem{definition}[theorem]{Definition}
\newtheorem{example}[theorem]{Example}

% Equation numbers by section:

% The next three lines make a new section reset the equation counter
\makeatletter
\@addtoreset{equation}{section}
\makeatother

\newcommand{\hz}{\hat{0}}
\newcommand{\ho}{\hat{1}}
\newcommand{\cv}{\vec{c}}
\newcommand{\cvp}{\vec{c}\:^{\prime}}
\newcommand{\cvc}{\vec{c}\:^{c}}
\newcommand{\dv}{\vec{d}}
\newcommand{\dvp}{\vec{d}\:^{\prime}}
\newcommand{\ev}{{\mathbf e}}
\newcommand{\SSSS}{{\mathfrak S}}
\newcommand{\HH}{\widetilde{H}}

\newcommand{\conv}{\operatorname{conv}}
\newcommand{\Des}{\operatorname{Des}}
\newcommand{\type}{\operatorname{type}}
\newcommand{\sd}{\operatorname{sd}}

\newcommand{\cupdots}{\cup \cdots \cup}
\newcommand{\timesdots}{\times \cdots \times}

\newcommand{\covered}{\prec}
\newcommand{\covering}{\succ}
\newcommand{\meet}{\wedge}
\newcommand{\join}{\vee}

\begin{document}

\title{The topology of restricted partition posets}
\author{Richard Ehrenborg and JiYoon Jung}

\date{}

\maketitle

\begin{abstract}
For each composition $\cv$ we show that
the order complex of the poset of pointed set
partitions~$\Pi^{\bullet}_{\cv}$
is a wedge of spheres
of the same dimension
with the multiplicity
given by the
number of permutations with descent composition $\cv$.
Furthermore, the action of the symmetric group on the
top homology is isomorphic to the Specht module~$S^{B}$
where $B$ is a border strip associated to the composition.
We also study the filter of pointed set partitions
generated by a knapsack integer partition and show
the analogous results on homotopy type and action
on the top homology.
\end{abstract}

%%\noindent
%%{\bf Keywords:} First, second, and third.

\section{Introduction}
\label{section_introduction}

The study of partitions with restrictions on their block sizes began
in the dissertation by Sylvester~\cite{Sylvester}, who studied the poset
$\Pi_{n}^{2}$
of partitions of $\{1,2, \ldots, n\}$
where every block has even size.
He proved that the M\"obius function of this poset
is given by
$\mu(\Pi_{n}^{2} \cup \{\hz\}) = (-1)^{n/2} \cdot E_{n-1}$,
where $E_{n}$ denotes the $n$th Euler number.
Recall that the $n$th Euler number enumerates
alternating permutations, that is, permutations
$\alpha=\alpha_{1} \cdots \alpha_{n}$
in the symmetric group $\SSSS_{n}$
such that $\alpha_{1} < \alpha_{2} > \alpha_{3} < \alpha_{4} > \cdots$.
Stanley~\cite{Stanley_e_s}
generalized this result to the $d$-divisible partition lattice $\Pi_{n}^{d}$,
that is, the collection of partitions
of $\{1,2, \ldots, n\}$
where each block size is divisible by $d$.
He found that the M\"obius function
$\mu( \Pi_{n}^{d} \cup \{\hz\})$ is,
up to the sign~$(-1)^{n/d}$, the number of
permutations in~$\SSSS_{n-1}$ with descent set
$\{d,2d, \ldots,n-d\}$,
in other words, the number of permutations
with descent composition
$(d, \ldots, d, d-1)$.

The enumerative results for the $d$-divisible partition lattice
were extended homologically.
Calderbank, Hanlon and Robinson~\cite{Calderbank_Hanlon_Robinson}
considered the action of the
symmetric group~$\SSSS_{n-1}$ on the
top homology group of the
order complex of $\Pi_{n}^{d} - \{\ho\}$.
They showed this action is the Specht module
on the border strip corresponding the composition
$(d, \ldots, d, d-1)$.
Wachs~\cite{Wachs} showed that
the other reduced homology groups vanish.
She presented an $EL$-labeling
for the $d$-divisible partition lattice
and hence as a corollary obtained that the homotopy type
is a wedge of spheres of dimension $n/d - 2$.
She then gave a more constructive proof of the representation
of the top homology of $\Delta(\Pi^{d}_{n}-\{\hat{1}\})$
by exhibiting an explicit isomorphism.
She identified cycles in
the complex $\Delta(\Pi^{d}_{n}-\{\hat{1}\})$
which are the barycentric subdivision of cubes
and associated with them polytabloids in
the Specht module.

So far we see that the $d$-divisible partition lattice
is closely connected
with permutations having the descent composition
$(d, \ldots, d, d-1)$. We explain this phenomenon in this paper
by introducing pointed partitions.
They are partitions where one block is considered special,
called the pointed block.
We obtain such a partition by removing the element $n$
from its block and making this block the pointed block.
We now extend the family of posets under consideration.
For each composition $\cv = (c_{1}, \ldots, c_{k})$ of $n$
we define a poset $\Pi^{\bullet}_{\cv}$ such that
the M\"obius function $\mu(\Pi^{\bullet}_{\cv} \cup \{\hz\})$ is
the sign~$(-1)^{k}$ times the number
of permutations with descent composition $\cv$.
Furthermore, we show the order complex of $\Pi^{\bullet}_{\cv}-\{\hat{1}\}$
is homotopy equivalent to a wedge of spheres of dimension $k-2$.
Finally, we show the action of the symmetric group
on the top homology group
$\HH_{k-2}(\Delta(\Pi^{\bullet}_{\cv} - \{\ho\}))$
is the Specht module corresponding to
the composition $\cv$.

Our techniques differ from Wachs' method for studying
the $d$-divisible partition lattice~\cite{Wachs}
as we do not obtain an $EL$-labeling of
$\Pi^{\bullet}_{\cv} \cup \{\hz\}$.
Instead we apply
Quillen's Fiber Lemma
and transform the question
into studying a subcomplex $\Delta_{\cv}$
of the complex of ordered
partitions. This subcomplex is in fact
the order complex of a rank-selected Boolean algebra.
Hence $\Delta_{\cv}$ is shellable and its homotopy type
is a wedge of spheres.
Furthermore, we use an equivariant version of
Quillen's Fiber Lemma
to conclude that
the reduced homology groups
$\HH_{k-2}(\Delta(\Pi^{\bullet}_{\cv} - \{\ho\}))$
and
$\HH_{k-2}(\Delta_{\cv})$
are isomorphic as $\SSSS_{n}$-modules.
Finally, to show that
the top homology group
$\HH_{k-2}(\Delta_{\cv})$
is isomorphic to the Specht module~$S^{B}$,
we follow Wachs' footsteps
by giving an explicit isomorphism
between these two $\SSSS_{n}$-modules,
that is, given a polytabloid ${\bf e}_{t}$
in the Specht module~$S^{B}$
we give an explicit cycle $g_{\alpha}$
spanning the homology group $\HH_{k-2}(\Delta_{\cv})$.

Ehrenborg and Readdy~\cite{Ehrenborg_Readdy_I}
introduced the notion of a knapsack partition,
which is an integer partition such that no integer can be written
as a sum of parts of the partition in two different ways.
They considered the filter in the pointed partition lattice
where the generators of the filter have their type
given by a knapsack partition.
They obtained the M\"obius function of this filter
is given by a sum of descent set statistics.
We extend their results topologically by showing that
the associated order complex is a wedge of spheres.
The proof follows the same outline as the previous study
except that we use
discrete Morse theory
to determine the homotopy type
of the associated complexes of
ordered set partitions.
Furthermore we obtain that
the action of the symmetric group on
the top homology is a direct sum of Specht modules.

We end the paper with open questions for future research.

\section{Preliminaries}
\label{section_preliminaries}

For basic notions concerning partially ordered sets (posets),
see Stanley's book~\cite{Stanley_EC_I}.
For topological background,
see Bj\"orner's article~\cite{Bjorner_handbook}
and Kozlov's book~\cite{Kozlov}.
For representation theory
of the symmetric group, see
Sagan~\cite{Sagan}.
Finally, a good reference for all three areas is
Wachs' article~\cite{Wachs_III}.

Let $[n]$ denote the set $\{1,2, \ldots, n\}$
and for $i \leq j$ let~$[i,j]$ denote the interval
$\{i,i+1, \ldots, j\}$.
A {\em pointed set partition} $\pi$
of the set $[n]$
is a pair $(\sigma,Z)$,
where $Z$ is a subset of $[n]$ and
$\sigma = \{B_{1}, B_{2}, \ldots, B_{k}\}$
is a partition of the set difference $[n] - Z$.
We will write the pointed partition $\pi$ as
$$  \pi  =  \{B_{1}, B_{2}, \ldots, B_{k}, \underline{Z}\}  , $$
where we underline the set $Z$ and we write
$1358|4|\underline{267}$ as shorthand
for \{\{1,3,5,8\},\{4\},\underline{\{2,6,7\}}\}.
Let~$\Pi^{\bullet}_{n}$ denote the set of
all pointed set partitions on the set~$[n]$.
The set $\Pi^{\bullet}_{n}$ has a natural poset structure.
The cover relation is given by two relations
\begin{eqnarray*}
  \{B_{1}, B_{2}, \ldots, B_{k}, \underline{Z}\}
 & < &
  \{B_{1} \cup B_{2}, \ldots, B_{k}, \underline{Z}\}  , \\
  \{B_{1}, B_{2}, \ldots, B_{k}, \underline{Z}\}
 & < &
  \{B_{2}, \ldots, B_{k}, \underline{B_{1} \cup Z}\}  .
\end{eqnarray*}

\begin{lemma}
The poset $\Pi^{\bullet}_{n}$ is
the intersection lattice of the hyperplane arrangement
$$    \left\{ \begin{array}{l l}
          x_{i} = x_{j} & 1 \leq i < j \leq n , \\
          x_{i} = 0     & 1 \leq i \leq n .
             \end{array} \right.   $$
\end{lemma}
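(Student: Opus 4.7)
The plan is to construct an explicit order isomorphism $\phi: \Pi^{\bullet}_{n} \to L(\mathcal{A})$ between the pointed partition poset and the intersection lattice of the arrangement $\mathcal{A}$ (ordered by reverse inclusion, so that the ambient space is the minimum element and the origin the maximum). Given $\pi = \{B_{1}, \ldots, B_{k}, \underline{Z}\}$, set
$$ \phi(\pi) = \{x \in \mathbb{R}^{n} : x_{i} = x_{j} \text{ whenever } i,j \text{ lie in a common block } B_{l}, \text{ and } x_{i} = 0 \text{ for all } i \in Z\}, $$
which is manifestly a finite intersection of hyperplanes from $\mathcal{A}$, of dimension exactly~$k$.

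To see that $\phi$ is a bijection, I would put an arbitrary flat $V \in L(\mathcal{A})$ into a normal form. The flat $V$ is cut out by some list of equations drawn from $\mathcal{A}$; generate the smallest equivalence relation on $[n]$ containing the imposed equalities $x_{i} = x_{j}$, then absorb into a single distinguished class~$Z$ every equivalence class meeting an index that is set to zero. The resulting data is a pointed set partition $\pi$ with $\phi(\pi) = V$, giving surjectivity. Injectivity follows by reading $\pi$ back off the subspace: $Z$ is recovered as the set of coordinates that vanish identically on $V$, and the remaining blocks are the equivalence classes of the relation ``$x_{i} = x_{j}$ throughout $V$'' on $[n] \setminus Z$.

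The order-preserving property is then verified by matching the two cover relations in~$\Pi^{\bullet}_{n}$ with the two ways of adjoining a single equation to the defining equations of a flat. Merging blocks $B_{p}$ and $B_{q}$ corresponds to adjoining one equality $x_{a} = x_{b}$ with $a \in B_{p}$, $b \in B_{q}$; absorbing a block $B_{p}$ into the pointed block corresponds to adjoining $x_{a} = 0$ for some $a \in B_{p}$, which by the absorption step drops all of $B_{p}$ into $Z$. In either case the dimension of the resulting flat is exactly one less than before, so these are covers in $L(\mathcal{A})$.

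I expect the only delicate point to be justifying the absorption rule used in the normal form: one must check that forcing a single coordinate $x_{a}$ in a previously merged equivalence class to vanish really makes every coordinate in that class vanish on the corresponding flat, so that the extraction of the underlined block $Z$ is canonical and the bijection is well defined. Once this is settled, the remainder of the argument is routine translation between the combinatorial data of a pointed partition and the linear data of a flat.
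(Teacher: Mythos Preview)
Your proposal is correct and follows essentially the same approach as the paper: both construct the map sending a pointed partition $\pi = \{B_{1}, \ldots, B_{k}, \underline{Z}\}$ to the subspace defined by $x_{i} = x_{j}$ for $i,j$ in the same block and $x_{i} = 0$ for $i \in Z$, and argue that this is an order isomorphism. The paper simply asserts that the bijection is ``straightforward,'' whereas you spell out the inverse, the cover-relation check, and the absorption subtlety, but the underlying idea is identical.
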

\begin{proof}
For the pointed partition
$\pi = \{B_{1}, B_{2}, \ldots, B_{k}, \underline{Z}\}$
construct a subspace satisfying
the equalities $x_{j_{1}} = x_{j_{2}}$
if $j_{1}$ and $j_{2}$ belong to the same block
of $\pi$, and let $x_{j} = 0$ if $j$ belongs to the
block $Z$. It is straightforward to see that
this is a bijection and thus proving the lemma.
\end{proof}

As a corollary to this claim we have that
the poset $\Pi^{\bullet}_{n}$ is a lattice.
Moreover, we call the set $Z$
the {\em zero set} or the {\em pointed block.}
The first name is motivated by the fact
that the set $Z$ corresponds to
the variables set to be zero
in an element in the intersection lattice.

The lattice $\Pi^{\bullet}_{n}$ is isomorphic to
the partition lattice $\Pi_{n+1}$ by the bijection
$$ \{B_{1}, \ldots, B_{k}, \underline{Z}\}
     \longmapsto
   \{B_{1}, \ldots, B_{k}, Z \cup \{n+1\}\}  .  $$
However it is to our advantage
to work with pointed set partitions.

For a permutation $\alpha = \alpha_{1} \cdots \alpha_{n}$
in the symmetric group~$\SSSS_{n}$
define its {\em descent set} to be the set
$$   \{i \in [n-1] \:\: : \:\: \alpha_{i} > \alpha_{i+1}\}. $$
Subsets of $[n-1]$ are in a natural bijective correspondence
with compositions of $n$.
Hence we define the
{\em descent composition} of the permutation $\alpha$
to be the composition
$$    \Des(\alpha)
   =
     (s_{1}, s_{2}-s_{1}, s_{3}-s_{2}, \ldots,
       s_{k-1}-s_{k-2}, n-s_{k-1})   ,  $$
where the descent set of $\alpha$ is the set
$\{s_{1} < s_{2} < \cdots < s_{k-1}\}$.
We define a {\em pointed integer composition}
$\cv = (c_{1}, \ldots, c_{k})$ to be
a list of positive integers $c_{1}, \ldots, c_{k-1}$ and
a non-negative integer $c_{k}$ with $c_1 + \cdots + c_k = n$.
Note that the only part allowed to be $0$ is the last part.
When the last part is positive we refer to $\cv$ as
a {\em composition}.
Let $\beta(\cv)$ denote the number of permutations
$\alpha$ in $\SSSS_{n}$
with descent composition $\cv$ for $c_{k} > 0$.
If $c_{k} = 0$,
let $\beta(\cv) = 0$ for $k \geq 2$ and
$\beta(\cv) = 1$ for $k = 1$.

Define the {\rm (right) weak Bruhat order}
on the symmetric group~$\SSSS_{n}$ by
the cover relation
$$    \alpha \covered \alpha \circ (i,i+1)  $$
if $\alpha_{i} < \alpha_{i+1}$.
Observe that the smallest element is the identity
element $12 \cdots n$ and the largest is
$n \cdots 21$.

On the set of compositions of $n$ we define an order relation by
letting the cover relation be adding adjacent entries,
that is,
$$    (c_{1}, \ldots, c_{i},  c_{i+1}, \ldots, c_{k})
          \covered
      (c_{1}, \ldots, c_{i} + c_{i+1}, \ldots, c_{k}) .  $$
Observe that this poset is isomorphic to the Boolean algebra $B_{n}$
on $n$ elements and the maximal and minimal elements
are the two compositions
$(n)$ and $(1, \ldots, 1 , 0)$.

An {\em integer partition} $\lambda$ of a non-negative integer $n$ is
a multiset of positive integers whose sum is $n$.
We will indicate
multiplicities with a superscript.
Thus
$\{5,3,3,2,1,1,1\} = \{5,3^{2},2,1^{3}\}$
is a partition of $16$.
A {\em pointed integer partition} $(\lambda,m)$ of $n$
is a pair where $m$ is a non-negative integer and
$\lambda$ is a partition of $n-m$.
We write this as
$\{\lambda_{1}, \ldots, \lambda_{p},\underline{m}\}$
where $\lambda = \{\lambda_{1}, \ldots, \lambda_{p}\}$
is the partition and $m$ is the pointed part.
This notion of pointed integer partition
is related to pointed set partitions by
defining the type of
a pointed set partition
$\pi = \{B_{1}, B_{2}, \ldots, B_{k}, \underline{Z}\}$
to be the pointed integer partition
$$  \type(\pi)
   =
    \{|B_{1}|, |B_{2}|, \ldots, |B_{k}|, \underline{|Z|}\} . $$
Similarly, the type of a composition
$\cv = (c_{1}, \ldots, c_{k})$
is the pointed integer partition
$$  \type(\cv)
   =
    \{c_{1}, \ldots, c_{k-1}, \underline{c_{k}}\} . $$

Recall that the {\em M\"obius function} of a poset $P$
is defined by
the initial condition
$\mu(x,x) = 1$
and the recursion
$\mu(x,z) = - \sum_{x \leq y < z} \mu(x,y)$.
When the poset $P$ has a minimal element $\hz$
and a maximal element $\ho$, we call
the quantity $\mu(P) = \mu(\hz,\ho)$
the M\"obius function of the poset $P$.

For a poset $P$ define its {\em order complex}
to be the simplicial complex $\Delta(P)$
where the vertices of the complex $\Delta(P)$ are
the elements of the poset $P$ and
the faces are the chains in the poset.
In other words, the order complex of $P$ is given by
$$ \Delta(P)
      =
   \{\{x_{1}, x_{2}, \ldots, x_{k}\} \:\: : \:\:
       x_{1} < x_{2} < \cdots < x_{k}, \:
       x_{1}, \ldots, x_{k} \in P
        \}  . $$
As a consequence of Hall's Theorem
\cite[Proposition~3.8.5]{Stanley_EC_I}
we have that the reduced Euler characteristic
of the order complex~$\Delta(P)$, that is,
$\widetilde{\chi}(\Delta(P))$, is given by
the M\"obius function $\mu(P \cup \{\hz,\ho\})$,
where $P \cup \{\hz,\ho\}$ denotes the poset~$P$
with new minimal and maximal elements adjoined.

Shelling and discrete Morse theory are two powerful tools for
determining the homotopy type of simplicial complexes.
See~\cite{Forman,Forman_2,Kozlov} for more details.
We begin by a review of shellings.
A simplicial complex is {\em pure} if all its facets
(maximal faces) have the same dimension.
A pure simplicial complex is {\em shellable}
if either it is a collection of disjoint vertices
or there is an ordering on the facets $F_{1}, \ldots, F_{m}$
such that
for $2 \leq i \leq m$ the intersection
$F_{i} \cap (F_{1} \cupdots F_{i-1})$ is a pure subcomplex
of $F_{i}$ of dimension $\dim(F_{i})-1$.
A facet $F_{i}$ is called {\em spanning}
if the intersection
$F_{i} \cap (F_{1} \cupdots F_{i-1})$
is the boundary of~$F_{i}$.
\begin{theorem}
A shellable simplicial complex of dimension $d$
is homotopy equivalent to a wedge of $b$ $d$-dimensional
spheres where $b$ is the number of spanning facets
in the shelling.
\end{theorem}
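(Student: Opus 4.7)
My plan is to prove the theorem by induction on the number of facets $m$ in the shelling order $F_{1}, F_{2}, \ldots, F_{m}$. For each $i$, let $\Delta_{i} = F_{1} \cup \cdots \cup F_{i}$, so $\Delta_{m}$ is the whole complex. I would show inductively that $\Delta_{i}$ is homotopy equivalent to a wedge of $b_{i}$ $d$-spheres, where $b_{i}$ is the number of spanning facets among $F_{1}, \ldots, F_{i}$. In the base case $i=1$, the complex $\Delta_{1}$ is a single $d$-simplex, hence contractible, i.e., a wedge of $b_{1}=0$ spheres (by convention $F_{1}$ is not spanning).

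For the inductive step, set $A_{i} = F_{i} \cap \Delta_{i-1}$, which by the shelling hypothesis is a pure $(d-1)$-dimensional subcomplex of the boundary $\partial F_{i}$. Thus $\Delta_{i}$ is the pushout of $\Delta_{i-1} \hookleftarrow A_{i} \hookrightarrow F_{i}$. I would invoke the standard fact from homotopy theory that if one attaches a contractible space along a cofibration, the homotopy type does not change, and that if one attaches a $d$-disk along its full boundary $(d-1)$-sphere, the effect is to wedge on a $d$-sphere. Splitting into two cases according to whether $F_{i}$ is spanning:
\begin{itemize}
\item If $A_{i} = \partial F_{i}$, the inclusion $A_{i} \hookrightarrow F_{i}$ is the inclusion $S^{d-1} \hookrightarrow D^{d}$, and the pushout gives $\Delta_{i} \simeq \Delta_{i-1} \vee S^{d}$, increasing the sphere count by exactly one.
\item Otherwise $A_{i}$ is a proper pure $(d-1)$-dimensional subcomplex of $\partial F_{i}$; since $F_{i}$ is contractible and $A_{i}$ will be shown to be contractible, the pushout collapses to $\Delta_{i-1}$, so the sphere count is unchanged.
\end{itemize}

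The key geometric input, and the step I expect to require the most care, is verifying that any proper pure $(d-1)$-dimensional subcomplex $A$ of the boundary of a $d$-simplex is contractible (in fact collapsible). I would argue this by noting that $\partial F_{i}$ is isomorphic to the boundary of a simplex whose top-dimensional faces are indexed by the vertices of $F_{i}$; a proper pure subcomplex omits at least one such face, say the face $G$ opposite vertex $v$. Then every facet of $A$ contains $v$, so $A$ is a cone with apex $v$ over the link of $v$ in $A$, and is therefore contractible. This is the main technical point; once it is in place, the inductive step follows from the pushout argument and summing the contributions of the spanning facets gives the total count $b = b_{m}$.

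One small sanity check I would include is the degenerate case where the complex consists of disjoint vertices (the $d=0$ case made explicit in the definition of shellability), verifying that $m$ isolated points form a wedge of $m-1$ zero-spheres and that the shelling convention counts exactly $m-1$ spanning facets, matching the inductive formula and so grounding the base of the argument.
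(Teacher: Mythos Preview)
The paper does not prove this theorem: it is stated in Section~\ref{section_preliminaries} as background, with references to the literature on shellability and discrete Morse theory, and no argument is supplied. So there is no ``paper's own proof'' to compare against; your proposal stands on its own.

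Your inductive pushout argument is the standard one and is essentially correct. Two remarks. First, in the spanning case you assert that attaching a $d$-disk along its boundary $(d-1)$-sphere yields $\Delta_{i-1}\vee S^{d}$; this is not automatic for arbitrary attaching maps, but it holds here because by induction $\Delta_{i-1}\simeq\bigvee S^{d}$ and $\pi_{d-1}\bigl(\bigvee S^{d}\bigr)=0$, so the attaching map is null-homotopic. You should make that step explicit. Second, your cone argument for the non-spanning case is clean and correct: if $A_{i}$ omits the facet of $\partial F_{i}$ opposite $v$, then every remaining $(d-1)$-face of $\partial F_{i}$ contains $v$, so $A_{i}$ is a cone with apex $v$; since $A_{i}\hookrightarrow F_{i}$ is then a cofibration between contractible complexes, the pushout leaves the homotopy type of $\Delta_{i-1}$ unchanged. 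With the null-homotopy point added, the proof is complete.
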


Next we review discrete Morse theory.
\begin{definition}
A {\em partial matching} in a poset $P$ is a partial matching
in the underlying graph of the Hasse diagram of $P$, that is, a
subset $M \subseteq P\times P$ such that $(x,y) \in M$ implies
$x \covered y$ and each $x\in P$ belongs to at most one element
of $M$. For a pair $(x,y)$ in the matching $M$ we write
$x=d(y)$ and $y=u(x)$,
where $d$ and $u$ stand for down and up, respectively.
\end{definition}

\begin{definition}
A partial matching $M$ on $P$ is {\em acyclic} if there does not exist a
cycle
$$    z_{1} \covered u(z_{1}) \covering
      z_{2} \covered u(z_{2}) \covering
            \cdots            \covering
      z_{n} \covered u(z_{n}) \covering z_{1} , $$
of elements
in $P$ with $n\geq 2$ and all $z_{i} \in P$ distinct.
Given an acyclic matching, the unmatched
elements are called {\em critical}.
\end{definition}

We need the following version of the main theorem
of discrete Morse theory.
\begin{theorem}
Let $\Gamma$ be a simplicial complex with an acyclic matching
on its face poset, where the empty face (set) is included.
Assume that there are $b$ critical cells and that they all have
the same dimension $k$.
Then the simplicial complex $\Gamma$ is homotopy equivalent
to a wedge of $b$ spheres of dimension~$k$.
\end{theorem}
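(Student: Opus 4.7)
The approach is to deduce this as a direct corollary of the fundamental theorem of discrete Morse theory, which asserts that whenever $\Gamma$ carries an acyclic matching $M$ on its face poset (with the empty face adjoined), $\Gamma$ is homotopy equivalent to a CW complex $\widehat{\Gamma}$ possessing exactly one $d$-cell for each critical cell of dimension $d$ in $M$. Granting this, the theorem follows quickly: since the critical cells all have the common dimension $k$, the empty face is necessarily matched (otherwise it would itself be critical in dimension $-1$), and its partner must be a vertex. This vertex plays the role of a basepoint in $\widehat{\Gamma}$, so $\widehat{\Gamma}$ is a CW complex built from a single $0$-cell and $b$ cells of dimension $k$. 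Any attaching map $S^{k-1} \to \{\mathrm{pt}\}$ is constant, so $\widehat{\Gamma} \simeq \bigvee^{b} S^{k}$ as desired.

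To prove the fundamental theorem itself, my plan is to reduce $\Gamma$ by a sequence of elementary collapses, one for each matched pair. If $(\sigma,\tau) \in M$ with $\sigma \covered \tau$ and $\sigma$ is a free face of the current subcomplex, meaning $\tau$ is the only face strictly containing $\sigma$, then deleting both $\sigma$ and $\tau$ is a simplicial collapse and hence a homotopy equivalence. The acyclicity of $M$ is precisely the combinatorial condition needed to guarantee that at every stage of the reduction one can find such a free matched pair to collapse: reversing the matched edges in the Hasse diagram produces a directed acyclic graph, which admits a linear extension along which the collapses can be executed in order. Processing the pairs in this order and keeping track only of the unmatched faces recovers the desired CW model $\widehat{\Gamma}$.

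The main obstacle is this last combinatorial step, namely proving that the acyclicity hypothesis gives a valid ordering of the matched pairs for which each successive collapse has a free face. Once that is established, iterating the elementary collapses reduces $\Gamma$ to a CW complex whose cells are precisely the critical cells of $M$; the uniform-dimension hypothesis on the critical cells, together with the matched empty face supplying a basepoint, then forces this complex to be a wedge of $b$ spheres of dimension $k$.
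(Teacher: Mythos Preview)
The paper does not prove this theorem. It is quoted as background---``the main theorem of discrete Morse theory''---with references to Forman and Kozlov, and is used as a black box later on. So there is nothing in the paper to compare your argument against; you are supplying a proof where the authors chose to cite one.

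Your sketch is the standard route and is essentially correct, but there is one internal inconsistency worth fixing. You state the fundamental theorem as: $\Gamma$ is homotopy equivalent to a CW complex $\widehat{\Gamma}$ with exactly one $d$-cell for each critical cell of dimension $d$. You then observe that the empty face is matched with some vertex $v$ and say ``this vertex plays the role of a basepoint in $\widehat{\Gamma}$.'' But $v$ is matched, hence not critical, so by your own statement of the fundamental theorem it contributes no cell to $\widehat{\Gamma}$; taken literally, $\widehat{\Gamma}$ would have $b$ cells of dimension $k$ and none of dimension $0$, which is not a CW complex. The clean fix is to restrict the matching to the face poset \emph{without} the empty face: then $v$ becomes unmatched and hence critical, yielding exactly one critical $0$-cell together with the $b$ critical $k$-cells, and now the fundamental theorem gives a CW complex with one $0$-cell and $b$ $k$-cells, which for $k\geq 1$ is $\bigvee^{b} S^{k}$. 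Alternatively, state the version of the fundamental theorem in which matching the empty face is explicitly what produces the basepoint.

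Your second paragraph, outlining a proof of the fundamental theorem itself via a linear extension of the modified Hasse diagram and successive collapses, is the right idea but is more delicate than you indicate: after the first few collapses the object is generally no longer a simplicial complex, so the ``free face'' collapses must be understood as elementary collapses of CW pairs (or one uses internal collapses/gradient flow as in Forman's or Kozlov's treatments). Since the paper only cites the result, this level of sketch is adequate, but if you intend it as a self-contained proof you should either restrict to the case where the critical cells lie in the top dimension (where a purely simplicial collapsing argument can be pushed through) or point explicitly to the CW-collapse machinery.
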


For the remainder of this section,
we restrict ourselves to considering compositions of $n$
where the last part is positive.
This restriction will also hold in
Sections~\ref{section_cycles},
\ref{section_group},
\ref{section_knapsack_cycles}
and~\ref{section_knapsack_group}.
Such a composition lies in the interval
from $(1, \ldots, 1)$ to $(n)$.
This interval is isomorphic to the Boolean algebra $B_{n-1}$
which is a complemented lattice. Hence for such
a composition $\cv$ there exists a complementary composition~$\cvc$
such that
$\cv \meet \cvc = (1, \ldots, 1)$
and
$\cv \join \cvc = (n)$.
As an example, the complement of
the composition
$(1,3,1,1,4) = (1,1+1+1,1,1,1+1+1+1)$
is obtained by exchanging
commas and plus signs, that is,
$(1+1,1,1+1+1+1,1,1,1) = (2,1,4,1,1,1)$.
Note that the complementary composition has
$n-k+1$ parts.

For a composition $\cv = (c_{1}, \ldots, c_{k})$
define the intervals $R_{1}, \ldots, R_{k}$ by
$R_{i} = [c_{1} + \cdots + c_{i-1} + 1,
          c_{1} + \cdots + c_{i}]$.
Define the subgroup~$\SSSS_{\cv}$ of the symmetric
group~$\SSSS_{n}$ by
$$   \SSSS_{\cv}
   =
     \SSSS_{R_{1}}
       \timesdots
     \SSSS_{R_{k}}   .  $$
Let $K_{1}, \ldots, K_{n-k+1}$
be the corresponding intervals
for the complementary composition
$\cvc$. Define the subgroup~$\SSSS_{\cv}^{c}$ by
$$   \SSSS_{\cv}^{c}
   =
     \SSSS_{\cvc}
   =
     \SSSS_{K_{1}}
       \timesdots
     \SSSS_{K_{n-k+1}}   .  $$

A {\em border strip} is a connected skew shape which does not
contain a $2$ by $2$ square~\cite[Section~7.17]{Stanley_EC_II}.
For each composition $\cv$
there is a unique border strip $B$
that has $k$ rows and
the $i$th row from below consists of $c_{i}$ boxes.
If we label the $n$ boxes of the border strip
from southwest to northeast, then the intervals
$R_{1}, \ldots, R_{k}$ correspond to the rows
and the intervals
$K_{1}, \ldots, K_{n-k+1}$ correspond to the columns.
Furthermore, the group
$\SSSS_{\cv}$ is the row stabilizer
and the group~$\SSSS_{\cv}^{c}$
is the column stabilizer
of the border strip $B$.

\section{Two subposets of the pointed partition lattice}
\label{section_subposets}

We now define the first poset central to this paper.
\begin{definition}
For $\cv$ a composition of $n$,
let $\Pi^{\bullet}_{\cv}$ be the subposet
of the pointed partition lattice $\Pi^{\bullet}_{n}$
described by
$$   \Pi^{\bullet}_{\cv}
   =
     \left\{\pi \in \Pi^{\bullet}_{n}
             \:\: : \:\:
       \exists \dv \geq \cv, \:\: \type(\pi) = \type(\dv) \right\} . $$
\end{definition}
In other words,
the poset
$\Pi^{\bullet}_{\cv}$ consists of all pointed set
partitions such that their type
is the type of some composition $\dv$ which is greater than
or equal to the composition $\cv$ in the composition order.

\begin{example}
{\rm
Consider the composition $\cv = (d, \ldots,d, d-1)$
of the integer $n = d \cdot k - 1$.
For a composition to be greater than or equal to $\cv$,
all of its parts must be divisible by~$d$
except the last part which is congruent to $d-1$ modulo $d$.
Hence $\Pi^{\bullet}_{\cv}$ consists of all
pointed set partitions where the block sizes
are divisible by $d$ except the zero block
whose size is congruent to $d-1$ modulo~$d$.
Hence the poset $\Pi^{\bullet}_{\cv}$ is isomorphic
to the $d$-divisible partition lattice $\Pi_{n+1}^{d}$.
}
\label{example_divisible_partition_lattice}
\end{example}

\begin{example}
{\rm
We note that $\Pi^{\bullet}_{\cv} \cup \{\hz\}$
is in general not a lattice.
Consider the composition $\cv = (1,1,2,1)$
and the four pointed set partitions
$$  \pi_{1} = 1|2|34|\underline{5}, \:\:
    \pi_{2} = 2|5|34|\underline{1}, \:\:
    \pi_{3} = 34|\underline{125}    \:\: \text{ and } \:\:
    \pi_{4} = 2|\underline{1345}  $$
in $\Pi^{\bullet}_{(1,1,2,1)}$.
In the pointed partition lattice $\Pi^{\bullet}_{5}$
we have
$\pi_{1}, \pi_{2} < 2|34|\underline{15} < \pi_{3}, \pi_{4}$.
Since the pointed set partition $2|34|\underline{15}$
does not belong to $\Pi^{\bullet}_{(1,1,2,1)}$, we conclude
that
$\Pi^{\bullet}_{(1,1,2,1)} \cup \{\hz\}$ is not a lattice.
}
\end{example}

We now turn our attention to filters in the pointed
partition lattice $\Pi^{\bullet}_{n}$
that are generated by a pointed knapsack partition.
These filters were introduced in~\cite{Ehrenborg_Readdy_I}.

Recall that we view an integer partition $\lambda$ as a multiset
of positive integers.
Let
$\lambda = \{\lambda_{1}^{e_{1}}, \ldots, \lambda_{q}^{e_{q}}\}$
be an integer partition, where we assume
that the $\lambda_{i}$'s are distinct.
If all the $(e_{1}+1) \cdots (e_{q}+1)$ integer linear combinations
$$   \left\{ \sum_{i=1}^{q} f_{i} \cdot \lambda_{i}
               \:\: : \:\:
             0 \leq f_{i} \leq e_{i} \right\} $$
are distinct, we call $\lambda$ a {\em knapsack partition}.
A pointed integer partition $\{\lambda, \underline{m}\}$ is called
a {\em pointed knapsack partition}
if the partition $\lambda$ is a knapsack partition.

This definition was introduced by
Ehrenborg--Readdy~\cite{Ehrenborg_Readdy_I}.
Their motivation was to compute the M\"obius function
of filters generated by knapsack partitions
in the pointed partition lattice;
see Corollary~\ref{corollary_Ehrenborg_Readdy}.
However, earlier Kozlov~\cite{Kozlov_paper} introduced the same
notion under the name no equal-subsets sums (NES).
His motivation was the same, except he studied the
topology of filters in the partition lattice.

\begin{definition}
For a pointed knapsack partition $\{\lambda, \underline{m}\}
=\{\lambda_{1}, \lambda_{2}, \ldots, \lambda_{k}, \underline{m}\}$
of $n$
define the subposet~$\Pi^{\bullet}_{\{\lambda, \underline{m}\}}$
to be the filter
of $\Pi^{\bullet}_{n}$
generated by all pointed set partitions of type
$\{\lambda, \underline{m}\}$.
\end{definition}

\begin{example}
{\rm Observe that
$\lambda = \{d,d, \ldots, d\}$ is a knapsack
partition. Hence all the block sizes
in the filter
$\Pi^{\bullet}_{\{\lambda, \underline{m}\}}$
are divisible by $d$, except the pointed block.
Hence for the pointed knapsack partition
$\{d,d, \ldots, d, \underline{d-1}\}$
we obtain the $d$-divisible partition lattice again,
as in
Example~\ref{example_divisible_partition_lattice}.
}
\end{example}

Note that $\Pi^{\bullet}_{\{\lambda, \underline{m}\}} \cup \{\hz\}$
is indeed a lattice since it inherits the join operation from
$\Pi^{\bullet}_{n}$. The fact the meet exists is due to
Proposition~3.3.1 in~\cite{Stanley_EC_I}.

\begin{example}
{\rm
To see the difference between the two
subposets defined in this section,
consider the pointed knapsack partition
$\{3,1,1,\underline{0}\}$ and
the composition $(3,1,1,0)$.
Observe that the pointed set partition
$\pi = 1/2/\underline{345}$ belongs to
the filter $\Pi^{\bullet}_{\{3,1,1, \underline{0}\}}$.
However, $\pi$ does not belong to
the subposet $\Pi^{\bullet}_{(3,1,1, 0)}$.
To observe this fact, note that the cardinality of
the pointed block is $3$
and there is no composition
$\dv$ greater than or equal to $(3,1,1,0)$
whose last part is $3$.
}
\end{example}

\section{The simplicial complex of ordered set partitions}
\label{section_Delta}

An {\em ordered set partition} $\tau$ of a set $S$ is
a list of blocks $(C_{1}, C_{2}, \ldots, C_{m})$
where the blocks are subsets of the set $S$
satisfying:
\begin{itemize}
\item[(i)]
All blocks except possibly the last block are non-empty,
that is, $C_{i} \neq \emptyset$ for $1 \leq i \leq m-1$.
\item[(ii)]
The blocks are pairwise disjoint,
that is, $C_{i} \cap C_{j} = \emptyset$
for $1 \leq i < j \leq m$.
\item[(iii)]
The union of the blocks is $S$,
that is, $C_{1} \cupdots C_{m} = S$.
\end{itemize}
To distinguish from pointed partitions we write
$36$-$127$-$8$-$45$ for $(\{3,6\},\{1,2,7\},\{8\},\{4,5\})$.
The {\em type} of an ordered set partition, $\type(\tau)$,
is the composition
$(|C_{1}|, |C_{2}|, \ldots, |C_{m}|)$.

Let $\Delta_{n}$ denote the collection of all ordered set partitions
of the set $[n]$.
We view $\Delta_{n}$ as a simplicial complex.
The ordered set partition
$\tau = (C_{1}, C_{2}, \ldots, C_{m})$
forms an $(m-2)$-dimensional face.
It has $m-1$ facets, which are
$(C_{1}, \ldots, C_{i} \cup C_{i+1}, \ldots, C_{m})$
for $1 \leq i \leq m-1$.
The empty face corresponds to
the ordered partition $([n])$.
The complex $\Delta_{n}$ has $2^{n} - 1$ vertices
that are of the form $(C_{1}, C_{2})$ where $C_{1} \neq \emptyset$.
Moreover there are $n!$ facets corresponding
to permutations in the symmetric group~$\SSSS_{n}$,
that is, for a permutation
$\alpha=\alpha_1 \cdots \alpha_n$, the associated facet is
$(\{\alpha_{1}\}, \{\alpha_{2}\}, \ldots, \{\alpha_{n}\}, \emptyset)$.

The permutahedron is the $(n-1)$-dimensional polytope
obtained by taking the convex hull of the~$n!$ points
$(\alpha_{1}, \ldots, \alpha_{n})$ where
$\alpha=\alpha_1 \cdots \alpha_n$ ranges over all permutations
in the symmetric
group~$\SSSS_{n}$.
Let~$P_{n}$ denote the boundary complex
of the dual of the $(n-1)$-dimensional permutahedron.
Since the permutahedron is a simple polytope
the complex $P_{n}$ is a simplicial complex
homeomorphic to an $(n-2)$-dimensional sphere.
Another view is that $P_{n}$ is the barycentric
subdivision of the boundary of the $n$-dimensional simplex.
Note that
the link of the vertex $([n],\emptyset)$
in the complex $\Delta_{n}$ is
the complex~$P_{n}$.
In fact, the complex $\Delta_{n}$ is the cone of $P_{n}$.

For a permutation $\alpha = \alpha_{1} \cdots \alpha_{n}$
in the symmetric group~$\SSSS_{n}$
and a composition $\cv = (c_{1}, \ldots, c_{k})$
of~$n$, define the ordered partition
\begin{eqnarray*}
     \sigma(\alpha,\cv)
  & = &
     \left(
       \{\alpha_{j} \:\: : \:\: j \in R_{i}\}
     \right)_{1 \leq i \leq k} \\
  & = &
     (\{\alpha_{1}, \ldots, \alpha_{c_{1}}\},
      \{\alpha_{c_{1}+1}, \ldots, \alpha_{c_{1}+c_{2}}\},
         \ldots,
      \{\alpha_{c_{1}+\cdots+c_{k-1}+1}, \ldots, \alpha_{n}\}) .
\end{eqnarray*}
We write $\sigma(\alpha)$ when it is clear from
the context what the composition $\cv$ is.

For a composition $\cv$ define the subcomplex $\Delta_{\cv}$
to be
$$   \Delta_{\cv}
  =
   \{\tau \in \Delta_{n} \:\: : \:\: \cv \leq \type(\tau)\} .  $$
This complex has all of its facets of type $\cv$.
Especially, each facet has the form $\sigma(\alpha,\cv)$
for some permutation $\alpha$.
As an example, note that
$\Delta_{(1,1, \ldots, 1)}$ is the complex $P_{n}$.
Two more examples are shown in Figures~\ref{figure_121}
and~\ref{figure_211}.

\begin{lemma}
If the pointed composition $\cv = (c_{1}, \ldots, c_{k})$ ends with $0$,
then the simplicial complex $\Delta_{\cv}$ is
a cone over the complex $\Delta_{(c_{1}, \ldots, c_{k-1})}$
with apex $([n],\emptyset)$
and hence contractible.
\label{lemma_last_part_zero}
\end{lemma}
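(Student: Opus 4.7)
My plan is to exhibit $\Delta_{\cv}$ directly as the simplicial join $\Delta_{(c_{1},\ldots,c_{k-1})}\ast\{([n],\emptyset)\}$, from which both the cone structure and contractibility follow immediately. The first step is to check that $v:=([n],\emptyset)$ is actually a vertex of $\Delta_{\cv}$. Because $c_{k}=0$ we have $c_{1}+\cdots+c_{k-1}=n$, so merging the initial $k-1$ parts of $\cv$ produces the composition $(n,0)$, which is precisely $\type(v)$; hence $\type(v)\geq\cv$ and $v\in\Delta_{\cv}$.

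Next I would set up a dictionary between faces of $\Delta_{n}$ and chains in the inclusion poset of nonempty subsets of $[n]$: a face $\tau=(D_{1},\ldots,D_{p})$ corresponds to the chain of prefix unions $D_{1}\subsetneq D_{1}\cup D_{2}\subsetneq\cdots\subsetneq D_{1}\cup\cdots\cup D_{p-1}$, and the vertex $v$ appears in this chain exactly when $D_{p}=\emptyset$, so that the final subset equals $[n]$. Under this dictionary the condition $\type(\tau)\geq\cv$ translates to the inclusion of the cumulative-sum set $\{|D_{1}|,\,|D_{1}|+|D_{2}|,\,\ldots,\,|D_{1}|+\cdots+|D_{p-1}|\}$ inside the divider set $S=\{c_{1},\,c_{1}+c_{2},\,\ldots,\,c_{1}+\cdots+c_{k-1}\}$ of $\cv$. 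The crucial observation is that the largest element of $S$ equals $c_{1}+\cdots+c_{k-1}=n$, and $S\setminus\{n\}$ is precisely the divider set of the composition $(c_{1},\ldots,c_{k-1})$.

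Given this dictionary, the cone decomposition is essentially bookkeeping. If $\tau\in\Delta_{\cv}$ satisfies $D_{p}=\emptyset$ then $v\in\tau$, and deleting $v$ removes $n$ from the cumulative-sum set, identifying $\tau\setminus\{v\}$ as a face of $\Delta_{(c_{1},\ldots,c_{k-1})}$. If instead $D_{p}\neq\emptyset$ then $v\notin\tau$ and the cumulative-sum set already lies in $S\setminus\{n\}$, so $\tau$ is itself a face of $\Delta_{(c_{1},\ldots,c_{k-1})}$; adjoining $v$ (that is, appending an empty block) simply re-introduces $n$ into the cumulative-sum set, yielding a face of $\Delta_{\cv}$. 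This establishes $\Delta_{\cv}=\Delta_{(c_{1},\ldots,c_{k-1})}\ast\{v\}$, whence contractibility is automatic. The main obstacle, such as it is, is making precise the translation between the composition order and divider-set inclusion, in particular tracking the trailing zero of $\cv$; once that dictionary is in place the lemma follows mechanically.
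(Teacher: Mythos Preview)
The paper states this lemma without proof, evidently regarding it as immediate from the definitions. Your argument is correct and supplies exactly the details the paper suppresses: the apex $v=([n],\emptyset)$ lies in a face $\tau=(D_{1},\ldots,D_{p})$ precisely when $D_{p}=\emptyset$, and appending or deleting this empty final block toggles between faces of $\Delta_{\cv}$ containing $v$ and faces of $\Delta_{(c_{1},\ldots,c_{k-1})}$, while faces with $D_{p}\neq\emptyset$ already lie in $\Delta_{(c_{1},\ldots,c_{k-1})}$. Your divider-set dictionary is a clean way to verify the type inequalities, and the observation that $n\in S$ exactly because $c_{k}=0$ (so that $S\setminus\{n\}$ is the divider set of $(c_{1},\ldots,c_{k-1})$) is the heart of the matter. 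There is nothing to compare against in the paper beyond the bare assertion, and your write-up is a faithful unpacking of it.
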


However for a facet $F$ in $\Delta_{\cv}$ there are
$\cv\,! = c_{1}! \cdots c_{k}!$
permutations that map to it by the function~$\sigma$. Hence let
$\sigma^{-1}(F)$ denote the smallest permutation $\alpha$
with respect to the weak Bruhat order
that gets mapped to the facet $F$.
This permutation satisfies the inequalities
$$  \alpha_{c_{1}+\cdots+c_{i}+1}
    < \cdots <
    \alpha_{c_{1}+\cdots+c_{i+1}}  $$
for $0 \leq i \leq k-1$.
Furthermore, the descent composition of
the permutation $\sigma^{-1}(F)$
is greater than or equal to the composition $\cv$,
that is,
$\Des(\sigma^{-1}(F)) \geq \cv$.

\begin{figure}[t]
\setlength{\unitlength}{0.7mm}
\begin{center}
\begin{picture}(120,120)(0,0)

\put(0,0){\circle*{3}}
\put(120,0){\circle*{3}}
\put(0,120){\circle*{3}}
\put(120,120){\circle*{3}}
\put(40,40){\circle*{3}}
\put(80,40){\circle*{3}}
\put(40,80){\circle*{3}}
\put(80,80){\circle*{3}}

\thicklines

\put(0,0){\line(1,0){120}}
\put(0,0){\line(0,1){120}}
\put(120,120){\line(-1,0){120}}
\put(120,120){\line(0,-1){120}}

\put(40,40){\line(1,0){40}}
\put(40,40){\line(0,1){40}}
\put(80,80){\line(-1,0){40}}
\put(80,80){\line(0,-1){40}}

\put(0,0){\line(1,1){40}}
\put(120,0){\line(-1,1){40}}
\put(120,120){\line(-1,-1){40}}
\put(0,120){\line(1,-1){40}}

%% Vertices:
\put(-5,-6){\small 4-123}
\put(115,-6){\small 234-1}
\put(115,123){\small 3-124}
\put(-5,123){\small 134-2}

\put(26,40){\small 124-3}
\put(82,40){\small 2-134}
\put(82,78){\small 123-4}
\put(26,78){\small 1-234}

%% Edges:
\put(55,-5){\small 4-23-1}
\put(55,35){\small 2-14-3}
\put(55,82){\small 1-23-4}
\put(55,122){\small 3-14-2}

\put(13,17){\rotatebox{45}{\small 4-12-3}}
\put(96,26){\rotatebox{-45}{\small 2-34-1}}
\put(92,96){\rotatebox{45}{\small 3-12-4}}
\put(16,106){\rotatebox{-45}{\small 1-34-2}}

\put(-4,53){\rotatebox{90}{\small 4-13-2}}
\put(36,53){\rotatebox{90}{\small 1-24-3}}
\put(82,66){\rotatebox{-90}{\small 2-13-4}}
\put(122,66){\rotatebox{-90}{\small 3-24-1}}

\end{picture}
\end{center}
\caption{The simplicial complex $\Delta_{(1,2,1)}$
of ordered partitions. Note that the ordered
partition $1234$ corresponds to the empty face.}
\label{figure_121}
\end{figure}
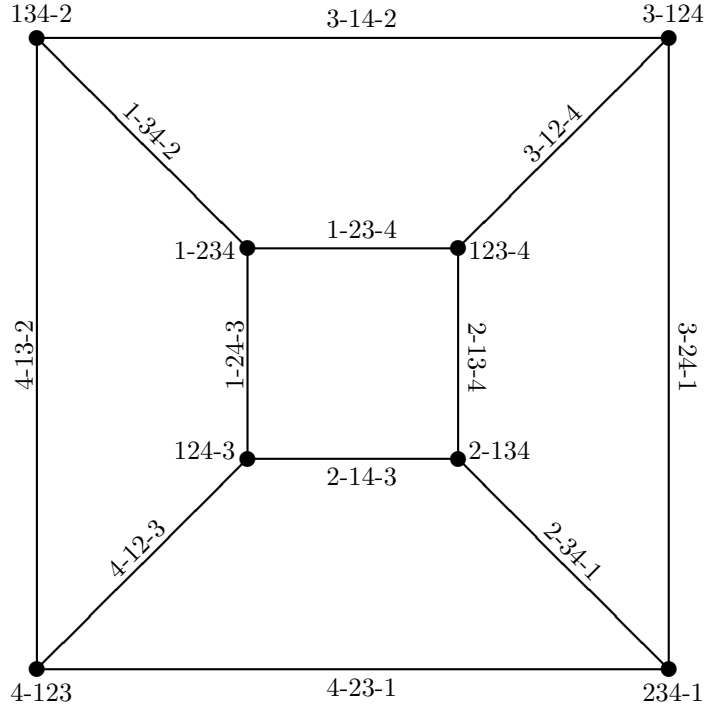

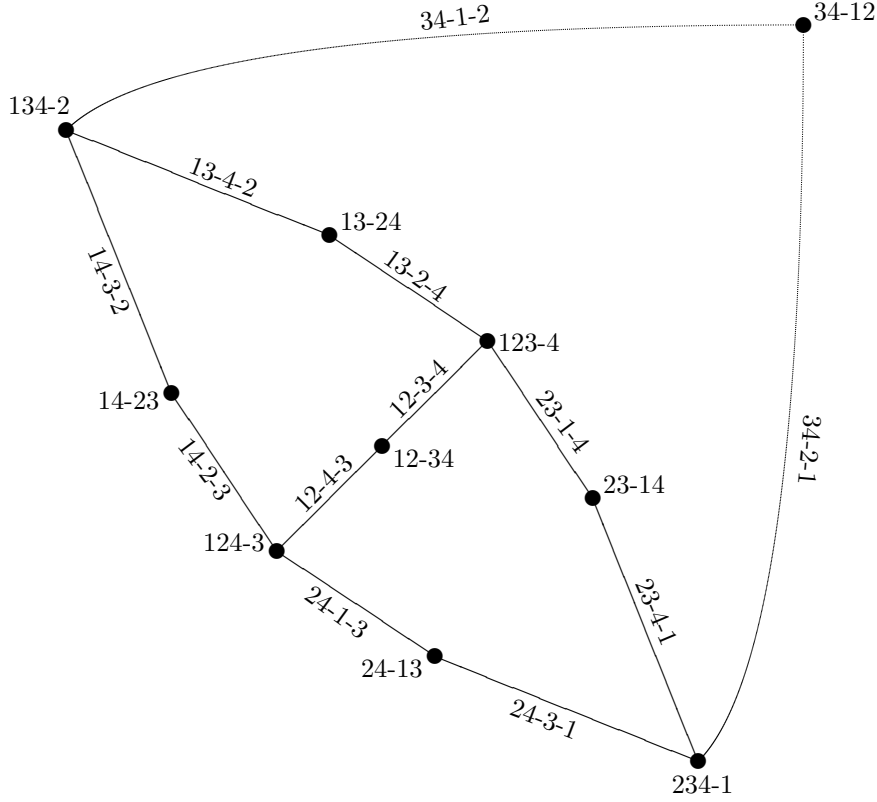
\begin{figure}[t]
\setlength{\unitlength}{0.7mm}
\begin{center}
\begin{picture}(140,140)(0,0)

%% Vertices:
\put(120,0){\circle*{3}}
\put(0,120){\circle*{3}}
\put(40,40){\circle*{3}}
\put(80,80){\circle*{3}}
\put(70,20){\circle*{3}}
\put(100,50){\circle*{3}}
\put(50,100){\circle*{3}}
\put(20,70){\circle*{3}}
\put(60,60){\circle*{3}}
\put(140,140){\circle*{3}}

\put(115,-6){\small 234-1}
\put(-11,123){\small 134-2}
\put(26,40){\small 124-3}
\put(82,78){\small 123-4}

\put(56,16){\small 24-13}
\put(6,67){\small 14-23}
\put(52,101){\small 13-24}
\put(102,51){\small 23-14}
\put(62,56){\small 12-34}
\put(142,141){\small 34-12}

%% Edges:
\put(40,40){\line(1,1){40}}
\put(40,40){\line(3,-2){30}}
\put(40,40){\line(-2,3){20}}
\put(80,80){\line(-3,2){30}}
\put(80,80){\line(2,-3){20}}
\put(120,0){\line(-5,2){50}}
\put(120,0){\line(-2,5){20}}
\put(0,120){\line(5,-2){50}}
\put(0,120){\line(2,-5){20}}

\qbezier(120,0)(140,20)(140,140)
\qbezier(0,120)(20,140)(140,140)

\put(84,9){\rotatebox{-21}{\small 24-3-1}}
\put(45,31){\rotatebox{-36}{\small 24-1-3}}
\put(108,34){\rotatebox{-67}{\small 23-4-1}}
\put(89,69){\rotatebox{-53}{\small 23-1-4}}

\put(21,60){\rotatebox{-53}{\small 14-2-3}}
\put(4,97){\rotatebox{-67}{\small 14-3-2}}
\put(60,95){\rotatebox{-36}{\small 13-2-4}}
\put(23,112){\rotatebox{-21}{\small 13-4-2}}

\put(43,47){\rotatebox{45}{\small 12-4-3}}
\put(61,65){\rotatebox{45}{\small 12-3-4}}
\put(67,139){\rotatebox{7}{\small 34-1-2}}
\put(139,66){\rotatebox{-95}{\small 34-2-1}}

\end{picture}
\end{center}
\caption{The simplicial complex $\Delta_{(2,1,1)}$
of ordered partitions.}
\label{figure_211}
\end{figure}

\begin{theorem}
Let $\cv$ be a composition not ending with a zero.
Then the simplicial complex $\Delta_{\cv}$ is shellable.
The spanning facets are of the form
$\sigma(\alpha)$ where $\alpha$ ranges over all
permutations in the symmetric group~$\SSSS_{n}$
with descent composition $\cv$, that is,
$\Des(\alpha) = \cv$.
Hence the complex $\Delta_{\cv}$ is homotopy equivalent to a wedge
of $\beta(\cv)$ spheres of dimension $k-2$.
\label{theorem_shelling}
\end{theorem}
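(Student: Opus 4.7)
The plan is to identify $\Delta_\cv$ with the order complex of a rank-selected Boolean algebra and then exhibit an explicit lexicographic shelling. Set $s_i = c_1 + c_2 + \cdots + c_i$ for $1 \leq i \leq k-1$ and let $S = \{s_1, \ldots, s_{k-1}\} \subseteq [n-1]$. Define the rank-selected subposet $B_n(S) = \{A \subseteq [n] : |A| \in S\}$, ordered by inclusion. The map sending an ordered partition $(C_1, \ldots, C_m)$ of type $\geq \cv$ to the chain of partial unions $C_1 \subset C_1 \cup C_2 \subset \cdots \subset C_1 \cup \cdots \cup C_{m-1}$, retaining only those unions whose cardinality lies in $S$, extends to a simplicial isomorphism $\Delta_\cv \cong \Delta(B_n(S))$. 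Under this isomorphism, facets of $\Delta_\cv$ correspond bijectively to the minimum coset representatives $\alpha \in \SSSS_n$ of $\SSSS_\cv \backslash \SSSS_n$, namely those permutations increasing within each block $[s_{i-1}+1, s_i]$, via $\alpha \leftrightarrow \sigma(\alpha)$.

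I would then shell $\Delta_\cv$ by listing its facets in the lexicographic order on their minimum coset representatives. For the facet $\sigma(\alpha) = (C_1, \ldots, C_k)$, each codimension-$1$ face is obtained by merging two adjacent blocks $C_i$ and $C_{i+1}$. Such a codimension-$1$ face is contained in some lex-earlier facet if and only if $(C_i, C_{i+1})$ is not the lex-minimal way to split $C_i \cup C_{i+1}$ into parts of sizes $c_i$ and $c_{i+1}$, which happens exactly when $\max(C_i) > \min(C_{i+1})$, equivalently $\alpha_{s_i} > \alpha_{s_i+1}$. This yields both the shelling axiom and the spanning-facet characterization: $\sigma(\alpha)$ is spanning precisely when $\alpha_{s_i} > \alpha_{s_i+1}$ for every $i$, which combined with the within-block increasing property of $\alpha$ is exactly $\Des(\alpha) = \cv$.

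Since the number of permutations $\alpha$ with $\Des(\alpha) = \cv$ is $\beta(\cv)$ and the dimension of $\Delta_\cv$ is $|S| - 1 = k - 2$, the shelling theorem stated in Section~\ref{section_preliminaries} yields the homotopy equivalence to a wedge of $\beta(\cv)$ spheres of dimension $k - 2$. The main technical obstacle I foresee is rigorously verifying the shelling axiom, namely that $F_j \cap (F_1 \cup \cdots \cup F_{j-1})$ is precisely the pure codimension-$1$ subcomplex of $F_j$ generated by the \emph{old} merging faces identified above; this is essentially a reformulation of Bj\"orner's classical EL-shelling of rank-selected Boolean algebras in the language of ordered partitions, and I expect no genuinely new difficulties.
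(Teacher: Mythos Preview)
Your proposal is correct and follows essentially the same approach as the paper: both identify $\Delta_{\cv}$ with the order complex of the rank-selected Boolean algebra $B_n(S)$ and use the lexicographic shelling coming from Bj\"orner's EL-labeling. The only difference is that the paper simply cites Bj\"orner's results for the shellability and the description of spanning facets, whereas you propose to unpack the shelling explicitly in the language of ordered partitions; your anticipated ``technical obstacle'' is precisely the content of those cited theorems, so no new work is actually needed.
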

\begin{proof}
Let $S$ be the subset of $[n-1]$ associated with the
composition $\cv$, that is,
$S = \{c_{1}, c_{1} + c_{2}, \ldots, c_{1} + \cdots + c_{k-1}\}$.
A facet $(C_{1}, \ldots, C_{k})$ of the complex
$\Delta_{\cv}$ corresponds to the maximal chain
$$    \emptyset \subseteq
      C_{1} \subseteq
      C_{1} \cup C_{2} \subseteq \cdots \subseteq
      C_{1} \cupdots C_{k-1} \subseteq
      [n]  $$
of the $S$-rank-selected Boolean algebra
$B_{n}(S) = \{x \in B_{n} : \rho(x) \in S\} \cup \{\hz,\ho\}$.
Hence $\Delta_{\cv}$ is the order complex
$\Delta(B_{n}(S) - \{\hz,\ho\})$.
The Boolean algebra has an $EL$-labeling,
which implies that the rank-selected
poset $B_{n}(S)$ has a shellable order complex;
see~\cite[Theorem~4.1]{Bjorner_I}.
In fact, the rank-selected poset $B_{n}(S)$ is $CL$-shellable;
see~\cite[Theorem~8.1]{Bjorner_Wachs}
or~\cite[Theorem~3.4.1]{Wachs_III}.
Furthermore, it follows from Bj\"orner's
construction that the spanning facets are
exactly of the above form.
\end{proof}

Finally, we note the following consequence.
\begin{corollary}
If the pointed composition $\cv = (c_{1}, \ldots, c_{k})$ ends with $0$,
then the simplicial complex~$\Delta_{\cv}$ is shellable.
\label{corollary_last_part_zero}
\end{corollary}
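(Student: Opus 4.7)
The plan is to reduce this directly to the previous theorem via Lemma~\ref{lemma_last_part_zero}. Since $\cv = (c_1, \ldots, c_{k-1}, 0)$ is a pointed composition ending in $0$, the entries $c_1, \ldots, c_{k-1}$ are all positive, so $\cv' := (c_1, \ldots, c_{k-1})$ is a genuine composition of $n$ (not ending in zero). By Theorem~\ref{theorem_shelling}, the complex $\Delta_{\cv'}$ is shellable; fix such a shelling order $F_1, F_2, \ldots, F_m$ of its facets.

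By Lemma~\ref{lemma_last_part_zero}, the complex $\Delta_{\cv}$ is the cone over $\Delta_{\cv'}$ with apex $v = ([n],\emptyset)$. Concretely, a facet of $\Delta_{\cv}$ has the form $(C_1, \ldots, C_{k-1}, \emptyset)$ where $(C_1, \ldots, C_{k-1})$ is a facet of $\Delta_{\cv'}$; equivalently, the facets of $\Delta_{\cv}$ are exactly $F_i \cup \{v\}$ for $i = 1, \ldots, m$.

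I would then invoke the well-known fact that a cone over a shellable complex is shellable, using the same shelling order on the apex-enlarged facets. To see that $F_1 \cup \{v\}, F_2 \cup \{v\}, \ldots, F_m \cup \{v\}$ is a shelling, note that for $2 \leq i \leq m$ we have
$$
  (F_i \cup \{v\}) \cap \bigl((F_1 \cup \{v\}) \cupdots (F_{i-1} \cup \{v\})\bigr)
  =
  \bigl(F_i \cap (F_1 \cupdots F_{i-1})\bigr) \cup \{v\},
$$
which is the cone (with apex $v$) over the intersection $F_i \cap (F_1 \cupdots F_{i-1})$. Since the latter is a pure subcomplex of $F_i$ of dimension $\dim(F_i) - 1$ by shellability of $\Delta_{\cv'}$, coning with $v$ yields a pure subcomplex of $F_i \cup \{v\}$ of dimension $\dim(F_i \cup \{v\}) - 1$, as required.

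There is no real obstacle here; the statement is a formal consequence of Lemma~\ref{lemma_last_part_zero} together with Theorem~\ref{theorem_shelling} and the cone-preserves-shellability observation. The only thing worth being slightly careful about is confirming the facet description of $\Delta_{\cv}$ in the degenerate case $c_k = 0$, so that the coning picture and the shelling order transfer cleanly.
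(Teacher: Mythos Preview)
Your proposal is correct and follows exactly the paper's approach: the paper's proof is the one-line remark that the complex is a cone over a shellable complex, and you have simply unpacked this by citing Lemma~\ref{lemma_last_part_zero}, Theorem~\ref{theorem_shelling}, and verifying the standard fact that coning preserves shellability.
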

\begin{proof}
The statement follows from the fact that the complex is the cone over
a shellable complex.
\end{proof}

\section{The homotopy type of the poset $\Pi^{\bullet}_{\cv}$}
\label{section_Quillen}

We now will use Quillen's Fiber Lemma to show that
the chain complex $\Delta(\Pi^{\bullet}_{\cv}-\{\ho\})$
is homotopy equivalent to
the simplicial complex $\Delta_{\cv}$.
Recall that a {\em simplicial map} $f$
from a simplicial complex $\Gamma$ to a poset $P$
sends vertices of $\Gamma$ to elements of $P$
and faces of the simplicial complex to chains of $P$.
We have the following result due to
Quillen~\cite{Quillen}.
See also~\cite[Theorem~10.5]{Bjorner_handbook}
and~\cite[Theorem~5.2.1]{Wachs_III}.

\begin{theorem}[Quillen's Fiber Lemma]
Let $f$ be a simplicial map from the
simplicial complex $\Gamma$ to the poset $P$
such that for all $x$ in $P$, the complex
$\Delta(f^{-1}(P_{\geq x}))$,
that is, the subcomplex of $\Gamma$ induced
by $f^{-1}(P_{\geq x})$, is contractible.
Then the order complex $\Delta(P)$ and
the simplicial complex $\Gamma$ are homotopy equivalent.
\end{theorem}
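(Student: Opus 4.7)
The plan is to prove Quillen's Fiber Lemma via the standard Grothendieck-construction / mapping-cylinder strategy: build an intermediate simplicial complex $M$ equipped with natural simplicial projections to both $\Gamma$ and $\Delta(P)$, and then show both projections are homotopy equivalences by a fiber analysis.

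Concretely, I would define $M$ to be the order complex of the poset whose elements are pairs $(v,x)$ with $v$ a vertex of $\Gamma$, $x$ an element of $P$, and $f(v) \geq x$, ordered by $(v,x) \leq (v',x')$ iff $v$ and $v'$ lie in a common face of $\Gamma$ with $f(v) \leq f(v')$ and $x \leq x'$. The two forgetful maps $p \colon M \to \Gamma$ and $q \colon M \to \Delta(P)$ are simplicial. The two fiber analyses would be:
(a) For each face $\sigma$ of $\Gamma$, the preimage $p^{-1}(\sigma)$ is the order complex of $\{x \in P : x \leq f(v) \text{ for all vertices } v \text{ of } \sigma\} = P_{\leq \min f(\sigma)}$, a poset with a maximum, hence a cone and contractible.
(b) For each chain $\gamma = (x_0 < x_1 < \cdots < x_\ell)$ in $P$, the preimage $q^{-1}(\gamma)$ retracts onto $\Delta(f^{-1}(P_{\geq x_\ell}))$, which is contractible by the hypothesis of the lemma.

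With both sets of fibers contractible, I would invoke a standard simplicial fiber theorem (the carrier lemma, or Bj\"orner's nerve-of-cover result, or the CW-level statement that a simplicial map with contractible preimages of closed stars is a homotopy equivalence) to conclude $\Gamma \simeq M \simeq \Delta(P)$. The main obstacle is making this last invocation fully rigorous: one has to move between contractibility of the closed simplicial preimages computed above and contractibility of preimages of open stars, and to verify the local retractions assemble coherently. The cleanest workaround, and the one most in the spirit of the present paper, would be to replace the topological invocation by an explicit acyclic matching on the face poset of $M$ that collapses $M$ separately onto each of $\Gamma$ and $\Delta(P)$; this is an entirely combinatorial construction compatible with the discrete Morse theory already developed in Section 2.
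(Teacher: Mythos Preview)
The paper does not give a proof of Quillen's Fiber Lemma. The theorem is quoted as a known result of Quillen, with pointers to \cite{Quillen}, \cite[Theorem~10.5]{Bjorner_handbook} and \cite[Theorem~5.2.1]{Wachs_III}, and is then used as a black box in Sections~\ref{section_Quillen} and later. So there is no argument in the paper for you to match; any comparison has to be against the literature rather than against this paper.

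As for your sketch itself: the strategy (build an intermediate object projecting to both $\Gamma$ and $\Delta(P)$ and analyze fibers) is indeed the standard one, and is essentially how Quillen proceeds. However, your specific poset $M$ is not well defined: the relation you put on pairs $(v,x)$ is not transitive. If $(v_{1},x_{1}) \leq (v_{2},x_{2}) \leq (v_{3},x_{3})$, you only know that $v_{1},v_{2}$ share a face and $v_{2},v_{3}$ share a face; nothing forces $v_{1},v_{3}$ to lie in a common face of $\Gamma$. (Take $\Gamma$ a path on three vertices mapping to a $3$-chain in $P$ for a concrete failure.) The usual repair is to work with the \emph{face poset} $\mathcal{F}(\Gamma)$ rather than the vertex set: one regards $f$ as a poset map $\mathcal{F}(\Gamma) \to P$, sending a face to the maximum of its $f$-image, and then applies Quillen's Theorem~A for poset maps, where the mapping-cylinder poset is honestly a poset and both fiber computations go through cleanly. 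Your fiber analyses (a) and (b) are morally the right ones once $M$ is set up correctly, and the discrete-Morse collapse you propose at the end is a legitimate alternative to the topological carrier lemma, but you should expect to have to pass to $\mathcal{F}(\Gamma)$ or to $\sd(\Gamma)$ to make the combinatorics honest.
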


Recall that the {\em barycentric subdivision}
of a simplicial complex $\Gamma$
is the simplicial complex $\sd(\Gamma)$
whose vertices are the non-empty faces of $\Gamma$
and whose faces are subsets of chains of
faces in $\Gamma$ ordered by inclusion.
It is well-known that
$\Gamma$ and $\sd(\Gamma)$ are homeomorphic
since they have the same geometric realization
and hence are homotopy equivalent.

Consider the map
$\phi : \Delta_{n} \longrightarrow \Pi^{\bullet}_{n}$
that sends
an ordered set partition $(C_{1}, C_{2}, \ldots, C_{k})$
to the pointed partition
$\{C_{1}, C_{2}, \ldots, C_{k-1}, \underline{C_{k}}\}$.
We call this map the {\em forgetful} map
since it forgets the order between the blocks
except it keeps the last part as the pointed block.
Observe that the inverse image of
the pointed partition
$\{C_{1}, C_{2}, \ldots, C_{k-1}, \underline{C_{k}}\}$
consists of $(k-1)!$ ordered set partitions.

\begin{lemma}
Let $\pi$ be the pointed partition
$\{B_{1}, \ldots, B_{m-1}, \underline{B_{m}}\}$
in $\Pi^{\bullet}_{\cv}$ where $m \geq 2$.
Let~$\Omega$ be the subcomplex of the complex $\Delta_{\cv}$
whose faces are given by the inverse image
$$
  \phi^{-1}\left(\left(\Pi^{\bullet}_{\cv}-\{\ho\}\right)_{\geq \pi}\right) .
$$
Then the complex $\Omega$ is a cone over
the apex
$([n] - B_{m}, B_{m})$
and hence is contractible.
\label{lemma_Omega}
\end{lemma}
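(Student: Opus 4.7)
The plan is to verify directly that $\Omega$ is a simplicial cone with apex $v = ([n]-B_{m}, B_{m})$, from which contractibility is immediate. This requires checking (i) that $v$ is a vertex of $\Omega$, and (ii) that for every face $\tau$ of $\Omega$ the simplicial join $v * \tau$ is again a face of $\Omega$.

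For (i), the set $[n]-B_{m}$ is non-empty because $m \geq 2$, so $v$ is indeed a vertex of $\Delta_{n}$. Its image $\phi(v) = \{[n]-B_{m}, \underline{B_{m}}\}$ is obtained from $\pi$ by amalgamating the non-pointed blocks $B_{1}, \ldots, B_{m-1}$ into a single block, so $\phi(v) \geq \pi$ in $\Pi^{\bullet}_{n}$. Since $\pi \in \Pi^{\bullet}_{\cv}$, the integer $|B_{m}|$ is a suffix-sum of $\cv$, say $|B_{m}| = c_{j} + \cdots + c_{k}$. The two-part composition $(c_{1}+\cdots+c_{j-1}, c_{j}+\cdots+c_{k})$ then lies above $\cv$ in composition order and has the same type as $\phi(v)$, yielding $\phi(v) \in \Pi^{\bullet}_{\cv} - \{\ho\}$.

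For (ii), fix a face $\tau = (C_{1}, \ldots, C_{\ell})$ of $\Omega$. The inequality $\phi(\tau) \geq \pi$ forces $B_{m} \subseteq C_{\ell}$, and the last part of $\type(\tau)$, which is obtained by merging a suffix of $\cv$, equals $c_{j'}+\cdots+c_{k}$ for some index $j' \leq j$. If $j' = j$, then $C_{\ell} = B_{m}$ and $v$ coincides with the last vertex of $\tau$, so $v * \tau = \tau \in \Omega$. Otherwise $j' < j$, and I would introduce the refinement
\[
  \tau^{*} = (C_{1}, \ldots, C_{\ell-1}, C_{\ell}-B_{m}, B_{m}),
\]
whose terminal vertex is $v$ and which collapses to $\tau$ upon merging its last two blocks. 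The splitting $|C_{\ell}| = (c_{j'}+\cdots+c_{j-1}) + (c_{j}+\cdots+c_{k})$ refines the $\cv$-merging recorded by $\type(\tau)$, so $\type(\tau^{*}) \geq \cv$ and $\tau^{*} \in \Delta_{\cv}$. The vertices inherited from $\tau$ already lie in $\phi^{-1}((\Pi^{\bullet}_{\cv}-\{\ho\})_{\geq \pi})$, while the single new vertex is $v$, handled in (i). Hence $\tau^{*} \in \Omega$ and $v * \tau = \tau^{*}$.

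The main subtlety is verifying the type condition $\type(\tau^{*}) \geq \cv$. This rests on $|B_{m}|$ being expressible as a suffix-sum of the parts of $\cv$, which is precisely the structural content of $\pi \in \Pi^{\bullet}_{\cv}$ rather than merely $\pi \in \Pi^{\bullet}_{n}$; without this input the proposed splitting of the last block of $\tau$ need not produce a face of $\Delta_{\cv}$.
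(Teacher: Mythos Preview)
Your proof is correct and takes the same approach as the paper: both show directly that $\Omega$ is a cone with apex $v = ([n]-B_m, B_m)$ by observing that every face either already has last block $B_m$ or can be refined by splitting off $B_m$ from its last block. Your write-up is in fact more careful than the paper's, which simply asserts the cone structure; you explicitly verify the key point that $\type(\tau^{*}) \geq \cv$, using that $|B_m|$ is a suffix-sum of $\cv$ because the pointed part of any $\dv \geq \cv$ must be such a sum.

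One small remark on the last step: concluding $\tau^{*} \in \Omega$ from the fact that all its vertices lie in $\phi^{-1}\bigl((\Pi^{\bullet}_{\cv}-\{\ho\})_{\geq \pi}\bigr)$ tacitly uses that $\Omega$ is a \emph{full} subcomplex of $\Delta_{\cv}$. This is true (since $\phi(\tau) \geq \pi$ holds precisely when no $B_i$ straddles any of the cuts determining the vertices of $\tau$), but it is quicker to check $\phi(\tau^{*}) \geq \pi$ directly: each $B_i$ with $i<m$ that was contained in $C_\ell$ is disjoint from $B_m$ and hence lands in $C_\ell \setminus B_m$.
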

\begin{proof}
Let $\varsigma = (C_{1}, C_{2}, \ldots, C_{r})$
be an ordered partition in the complex $\Omega$.
Observe that the pointed block $B_{m}$ is contained in
the last block $C_{r}$.
Let $\Omega^{\prime}$ be the subcomplex of
$\Omega$ consisting of all faces
$(C_{1}, C_{2}, \ldots, C_{r})$
such that the last block~$C_{r}$ strictly contains
the pointed block~$B_{m}$.
Then the complex~$\Omega$ is a cone
over the complex~$\Omega^{\prime}$
with the apex $([n] - B_{m}, B_{m})$.
\end{proof}

\begin{theorem}
The order complex $\Delta(\Pi^{\bullet}_{\cv}-\{\ho\})$
is homotopy equivalent to
the barycentric subdivision~$\sd(\Delta_{\cv})$
and hence $\Delta_{\cv}$.
\label{theorem_pointed_partitions}
\end{theorem}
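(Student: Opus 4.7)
The plan is to deduce this homotopy equivalence from Quillen's Fiber Lemma applied to a lift of the forgetful map $\phi$ to the barycentric subdivision of $\Delta_{\cv}$. I would first define a map
$$
\tilde{\phi} : \sd(\Delta_{\cv}) \longrightarrow \Pi^{\bullet}_{\cv} - \{\ho\}
$$
on vertices by $\tilde{\phi}(\tau) = \phi(\tau)$ for each non-empty face $\tau = (C_{1}, \ldots, C_{m})$ of $\Delta_{\cv}$. The type of $\tau$ is a composition $\dv \geq \cv$, so $\phi(\tau) = \{C_{1}, \ldots, C_{m-1}, \underline{C_{m}}\}$ lies in $\Pi^{\bullet}_{\cv}$, and for $k \geq 2$ it avoids the top element $\ho = \{\underline{[n]}\}$ since $\tau$ has at least two blocks; the degenerate case $k = 1$ is trivial.

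To see that $\tilde{\phi}$ is simplicial I would check that whenever $\tau^{(0)} \subsetneq \tau^{(1)} \subsetneq \cdots \subsetneq \tau^{(r)}$ is a chain of non-empty faces of $\Delta_{\cv}$ (i.e., a face of $\sd(\Delta_{\cv})$), each $\tau^{(i)}$ is a coarsening of $\tau^{(i+1)}$ obtained by merging consecutive blocks, so that $\phi(\tau^{(0)}) \geq \phi(\tau^{(1)}) \geq \cdots \geq \phi(\tau^{(r)})$ is a chain in $\Pi^{\bullet}_{\cv}$. Then, for any $\pi \in \Pi^{\bullet}_{\cv} - \{\ho\}$, the subcomplex of $\sd(\Delta_{\cv})$ induced on $\tilde{\phi}^{-1}((\Pi^{\bullet}_{\cv} - \{\ho\})_{\geq \pi})$ is by construction the barycentric subdivision $\sd(\Omega)$ of the complex $\Omega$ appearing in Lemma~\ref{lemma_Omega}, and hence is contractible since $\Omega$ is. Quillen's Fiber Lemma then yields $\sd(\Delta_{\cv}) \simeq \Delta(\Pi^{\bullet}_{\cv} - \{\ho\})$, and composing with the homeomorphism $\sd(\Delta_{\cv}) \cong \Delta_{\cv}$ proves the theorem.

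The main subtlety is the necessity of passing to the barycentric subdivision: the map $\phi$ viewed directly on $\Delta_{\cv}$ is \emph{not} simplicial into $\Pi^{\bullet}_{\cv}$, since the vertices of a single facet $\sigma(\alpha, \cv)$ have pairwise incomparable $\phi$-images. Barycentric subdivision repairs this because its simplices are chains of faces of $\Delta_{\cv}$, on which $\phi$ is order-reversing. Once that reinterpretation is in place, all the real work of contractibility has already been absorbed into Lemma~\ref{lemma_Omega}, so nothing further is required.
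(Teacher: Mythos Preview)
Your proposal is correct and follows essentially the same route as the paper: lift $\phi$ to a simplicial map $\sd(\Delta_{\cv}) \to \Pi^{\bullet}_{\cv} - \{\ho\}$, identify each fiber with $\sd(\Omega)$, invoke Lemma~\ref{lemma_Omega} for contractibility, and apply Quillen's Fiber Lemma. You have simply spelled out more carefully why the passage to the barycentric subdivision is needed and why the lifted map is simplicial, points the paper leaves implicit.
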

\begin{proof}
Note that
$$\phi^{-1}\left(\left(\Pi^{\bullet}_{\cv}-\{\ho\}\right)_{\geq \pi}\right)
    =
 \sd(\Omega)
    \simeq
 \Omega, $$
which is contractible by Lemma~\ref{lemma_Omega}.
Hence Quillen's Fiber Lemma applies
and we conclude that
$\Delta\left(\Pi^{\bullet}_{\cv}-\{\ho\}\right)$
is homotopy equivalent to
$\sd(\Delta_{\cv})$.
\end{proof}

By considering the reduced Euler characteristic of the complex
$\Delta\left(\Pi^{\bullet}_{\cv}-\{\ho\}\right)$,
we have the following corollary.
\begin{corollary}
The M\"obius function of the poset
$\Pi^{\bullet}_{\cv} \cup \{\hz\}$ is given by $(-1)^{k} \cdot \beta(\cv)$.
\end{corollary}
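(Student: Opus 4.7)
The plan is to read off the Möbius function from the homotopy type already established. By Hall's theorem (cited in the preliminaries via the reference \cite[Proposition~3.8.5]{Stanley_EC_I}), for any bounded poset the Möbius function between the bottom and top equals the reduced Euler characteristic of the order complex of the open interval. In our setting, the poset $\Pi^{\bullet}_{\cv}$ already has a maximum element $\ho = \underline{[n]}$ (the pointed partition whose pointed block is all of $[n]$, which is the type coming from the composition $(n) \geq \cv$), so adjoining $\hz$ alone yields a bounded poset, and the associated open interval has order complex equal to $\Delta(\Pi^{\bullet}_{\cv}-\{\ho\})$.

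First I would apply Hall's theorem to write
\[
   \mu(\Pi^{\bullet}_{\cv} \cup \{\hz\})
   =
   \widetilde{\chi}\bigl(\Delta(\Pi^{\bullet}_{\cv}-\{\ho\})\bigr).
\]
Next I would invoke Theorem~\ref{theorem_pointed_partitions} to replace $\Delta(\Pi^{\bullet}_{\cv}-\{\ho\})$ by the homotopy equivalent complex $\Delta_{\cv}$, so that the reduced Euler characteristic above equals $\widetilde{\chi}(\Delta_{\cv})$. Finally I would appeal to Theorem~\ref{theorem_shelling}: the complex $\Delta_{\cv}$ is homotopy equivalent to a wedge of $\beta(\cv)$ spheres of dimension $k-2$, whose reduced Euler characteristic is $(-1)^{k-2} \cdot \beta(\cv) = (-1)^{k} \cdot \beta(\cv)$.

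Putting the three identities together yields the claim. There is essentially no obstacle — every ingredient is already in place; the only point requiring care is the bookkeeping of the top element (to ensure that only $\hz$ needs to be adjoined, so that the open interval really is $\Pi^{\bullet}_{\cv}-\{\ho\}$) and the sign $(-1)^{k-2}=(-1)^{k}$ coming from the dimension of the spheres. Since the hypothesis of Theorem~\ref{theorem_shelling} requires $\cv$ not to end in zero, this should be noted; in the pointed case where $c_{k}=0$ the right-hand side would have to be interpreted separately, but this case is implicitly excluded here by the definition of $\beta$ and by our standing assumption.
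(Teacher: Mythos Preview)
Your argument is correct and is exactly the route the paper takes: the corollary is stated immediately after Theorem~\ref{theorem_pointed_partitions} with the one-line justification ``by considering the reduced Euler characteristic,'' which unpacks precisely into your chain of identities via Hall's theorem, the homotopy equivalence with $\Delta_{\cv}$, and Theorem~\ref{theorem_shelling}. One small correction: the case $c_{k}=0$ is not excluded by any standing assumption in this section, but it is still covered, since by Lemma~\ref{lemma_last_part_zero} the complex $\Delta_{\cv}$ is contractible and hence $\widetilde{\chi}(\Delta_{\cv})=0=(-1)^{k}\beta(\cv)$ for $k\geq 2$.
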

We note that a combinatorial proof can be given for this corollary,
which avoids the use of Quillen's Fiber Lemma.

\section{Cycles in the complex $\Delta_{\cv}$}
\label{section_cycles}

In this section and the next we assume that the last part of
the composition $\cv$ is non-zero, since in the case
$c_{k} = 0$ the top homology group is the trivial group;
see Lemma~\ref{lemma_last_part_zero}.

For $\alpha$ a permutation in the symmetric group
$\SSSS_{n}$, define the subcomplex
$\Sigma_{\alpha}$ of the complex $\Delta_{\cv}$
to be the simplicial complex
whose facets are given by
$$ \left\{\sigma(\alpha \circ \gamma)
               \:\: : \:\:
      \gamma \in \SSSS_{\cv}^{c} \right\} , $$
where $\sigma$ is defined in Section~\ref{section_Delta}.

\begin{lemma}
The subcomplex $\Sigma_{\alpha}$ is
isomorphic to the join
of the duals of the permutahedra
$$P_{|K_{1}|} *  \cdots * P_{|K_{n-k+1}|}$$
and hence it is a sphere of dimension $k-2$.
\label{lemma_sphere_I}
\end{lemma}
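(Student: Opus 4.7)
The plan is to construct an explicit simplicial isomorphism $\Psi$ from the iterated join $P_{|K_{1}|} \ast \cdots \ast P_{|K_{n-k+1}|}$ onto $\Sigma_{\alpha}$; the sphere claim will then follow immediately, since the join of spheres is a sphere of dimension $\sum_{j}(|K_{j}|-2) + (n-k+1) - 1 = n - (n-k+1) - 1 = k-2$. The key combinatorial ingredient will be the bijection between the $k-1$ ``cuts'' of $\cv$ (the adjacencies between $R_{i}$ and $R_{i+1}$) and the internal cuts inside the columns of the border strip $B$: each cut $i$ sits in the unique column $K_{j}$ meeting both rows $R_{i}$ and $R_{i+1}$, and since a column of height $|K_{j}|$ contributes $|K_{j}|-1$ such cuts we obtain the identity $\sum_{j}(|K_{j}|-1) = k-1$. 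Every subset $S\subseteq[k-1]$ therefore decomposes uniquely as $S = \bigsqcup_{j} S_{j}$ with $S_{j}\subseteq\{1,\ldots,|K_{j}|-1\}$.

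Next I would analyze a facet $\sigma(\alpha\circ\gamma,\cv)$ column by column. Writing $\gamma=(\gamma_{1},\ldots,\gamma_{n-k+1})\in\SSSS_{\cv}^{c}$ and using that $|R_{i}\cap K_{j}|\leq 1$ in the border strip, the $i$-th block of the facet equals $\bigcup_{j}\alpha(\gamma_{j}(R_{i}\cap K_{j}))$, so the facet is fully determined by the individual $\gamma_{j}$. A face of $\Sigma_{\alpha}$ is obtained from a facet by merging blocks across the cuts in $[k-1]\setminus S$ for some subset $S$, and this data is equivalent to specifying, for each column $K_{j}$, an ordered partition of $K_{j}$ into $|S_{j}|+1$ non-empty blocks whose block sizes encode the local pattern $S_{j}$. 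The map $\Psi$ then sends a face $(F_{1},\ldots,F_{n-k+1})$ of the join, with $F_{j}=(A_{1}^{(j)},\ldots,A_{b_{j}}^{(j)})$ an ordered partition of $K_{j}$ (the empty face of $P_{|K_{j}|}$ being read as the trivial one-block partition), to the ordered partition of $[n]$ whose blocks $E_{1},\ldots,E_{|S|+1}$ are formed by placing each piece $\alpha(A_{m}^{(j)})$ into the global slot dictated by $S=\bigsqcup_{j}S_{j}$. The inverse simply restricts a face of $\Sigma_{\alpha}$ to each $\alpha(K_{j})$ and pulls back through $\alpha$.

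What remains is to verify that $\Psi$ is a simplicial isomorphism. It will be a bijection on vertex sets, and the face relation is preserved because merging two consecutive blocks of $F_{j}$ corresponds exactly to deleting one element of $S_{j}$, which in turn merges the two corresponding consecutive blocks of the image in $\Sigma_{\alpha}$. I expect the main obstacle to be the proof that the across-columns interleaving is well defined on the entire simplicial complex, and this is precisely what the cut-to-column bijection of the first paragraph is designed to supply. Once $\Psi$ is in place, $\Sigma_{\alpha}$ is homeomorphic to the iterated join $P_{|K_{1}|} \ast \cdots \ast P_{|K_{n-k+1}|}$, and hence to a sphere of dimension $k-2$, as claimed.
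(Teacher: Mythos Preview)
Your approach is correct and is essentially the same as the paper's: both represent each $P_{|K_{j}|}$ by ordered partitions of $K_{j}$ and assemble these column-by-column into an ordered partition of $[n]$ lying in $\Sigma_{\alpha}$. The paper's formulation is simply more concise---it concatenates the tuples $F_{j}$ and merges the last block of $F_{j}$ with the first block of $F_{j+1}$, which produces exactly your map $\Psi$ without the intermediate bookkeeping of cut sets $S_{j}$ and global slots.
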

\begin{proof}
Represent the complex $P_{|K_{i}|}$
by ordered partitions on the set $K_{i}$.
A face of the join
$P_{|K_{1}|} *  \cdots * P_{|K_{n-k+1}|}$
is then an $(n-k+1)$-tuple
$(\tau_{1}, \ldots, \tau_{n-k+1})$
where $\tau_{i} = (D_{i,1}, \ldots, D_{i,j_{i}}) \in P_{|K_{i}|}$.
We obtain a face of
$\Sigma_{\alpha}$ by gluing
these ordered partitions together, that is,
$$ (D_{1,1}, \ldots, D_{1,j_{1}} \cup
    D_{2,1}, \ldots, D_{2,j_{2}} \cup
    D_{3,1}, \ldots, D_{n-k+1,j_{n-k+1}}) . $$
It is straightforward to see this is a bijective
correspondence proving the first claim.
Since the join of an $m$-sphere and an $n$-sphere
is an $(m+n+1)$-sphere, we obtain a
sphere of dimension
$(|K_{1}|-2) + \cdots + (|K_{n-k+1}|-2) + n-k = k-2$.
\end{proof}

Observe that the facets of $\Delta_{\cv}$ are in bijection
with permutations $\alpha$ such that $\Des(\alpha) \geq \cv$
in the composition order.

\begin{lemma}
Let $\alpha$ be a permutation in the symmetric group
$\SSSS_{n}$ with descent composition $\cv$ and let
$\gamma$ belong to
the column stabilizer~$\SSSS_{\cv}^{c}$.
Then we have the inequality
$\alpha \circ \gamma \leq \alpha$
in the weak Bruhat order.
\label{lemma_Bruhat}
\end{lemma}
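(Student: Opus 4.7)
The plan is to induct on the length $\ell(\gamma)$ of $\gamma$ in the Coxeter group $\SSSS_{\cv}^c = \SSSS_{K_{1}} \timesdots \SSSS_{K_{n-k+1}}$, where length is measured in the simple reflections of this group. The base case $\gamma = e$ gives $\alpha \circ \gamma = \alpha$ and is immediate.

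The key preliminary observation is that $\alpha$ is strictly decreasing on each column $K_{l}$. Since the columns partition $[n]$ into consecutive intervals whose sizes are the parts of $\cvc$, we may write $K_{l} = \{p, p+1, \ldots, p+s-1\}$. The construction of $\cvc$ from $\cv$ by exchanging commas and plus signs identifies the ``inter-column'' positions $\cvc_{1}, \cvc_{1}+\cvc_{2}, \ldots$ with the plus signs of $\cv$, that is, with the non-descent positions of any permutation having descent composition $\cv$. The remaining positions $p, p+1, \ldots, p+s-2$ inside $K_{l}$ are therefore all descent positions of $\alpha$, which forces
\[
   \alpha_{p} > \alpha_{p+1} > \cdots > \alpha_{p+s-1}.
\]

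For the inductive step, assume $\gamma \neq e$ and pick a simple reflection $s_{i} = (i,i+1)$ in $\SSSS_{\cv}^c$ (so that $i,i+1 \in K_{l}$ for some $l$) which is a right descent of $\gamma$, that is, $\gamma(i) > \gamma(i+1)$; such an $s_{i}$ exists by the standard theory of Coxeter groups. Write $\gamma = \gamma' \circ s_{i}$ with $\ell(\gamma') = \ell(\gamma)-1$, so $\gamma'(i) = \gamma(i+1) < \gamma(i) = \gamma'(i+1)$, and both values lie in $K_{l}$. Using that $\alpha$ is strictly decreasing on $K_{l}$,
\[
   (\alpha \circ \gamma')_{i} = \alpha_{\gamma'(i)} > \alpha_{\gamma'(i+1)} = (\alpha \circ \gamma')_{i+1},
\]
so $\alpha \circ \gamma = (\alpha \circ \gamma') \circ s_{i} \covered \alpha \circ \gamma'$ is a downward covering in the weak Bruhat order. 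Combined with the inductive hypothesis $\alpha \circ \gamma' \leq \alpha$, this gives $\alpha \circ \gamma \leq \alpha$.

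The main obstacle is the preliminary observation: translating the combinatorics of the complementary composition $\cvc$ and the border strip $B$ into the concrete statement that $\alpha$ is strictly decreasing on every column $K_{l}$. Once this structural fact is in hand, the induction is routine and driven entirely by the exchange condition in $\SSSS_{\cv}^c$.
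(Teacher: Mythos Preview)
Your proof is correct. Both your argument and the paper's rest on the same key observation---that a permutation with descent composition exactly $\cv$ is strictly decreasing on each column $K_{l}$---but the executions differ slightly. The paper argues block-wise: since $\alpha$ restricted to $K_{l}$ is the longest element of $\SSSS_{K_{l}}$, right-multiplying by any $\gamma_{l} \in \SSSS_{K_{l}}$ can only move it down in the weak order on that block, and then one ``concatenates'' these block inequalities into a global one. You instead run a global induction on the Coxeter length of $\gamma$ in $\SSSS_{\cv}^{c}$, peeling off one simple reflection at a time and exhibiting an explicit downward cover at each step. Your version has the virtue of making the chain of cover relations completely explicit (in particular, it makes transparent why the concatenation step in the paper is legitimate), while the paper's phrasing is more compact and highlights the role of the longest element in each parabolic factor.
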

\begin{proof}
Write $\gamma$ as $(\gamma_{1}, \ldots, \gamma_{n-k+1})$
where $\gamma_{i}$ belongs to $\SSSS_{K_{i}}$.
Then $\gamma_{i}$ acts on the interval $K_{i} = [u,v]$.
Note that
$v (v-1) \cdots u$ is the largest permutation in the Bruhat
order on $\SSSS_{K_{i}}$.
Hence $\alpha \circ \gamma$ restricted to $K_{i}$
is a smaller element in the Bruhat order.
The inequality follows by
concatenating
these partial permutations
into the permutation $\alpha \circ \gamma$.
\end{proof}

We note that this lemma has a dual version.
\begin{lemma}
Let $\alpha$ be a permutation in the symmetric group
$\SSSS_{n}$ with descent composition $\cv$ and let
$\gamma$ belong to
the row stabilizer~$\SSSS_{\cv}$.
Then we have the inequality
$\alpha \circ \gamma \geq \alpha$
in the weak Bruhat order.
\end{lemma}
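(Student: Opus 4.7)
The plan is to mirror the proof of Lemma~\ref{lemma_Bruhat}, interchanging the roles of rows and columns and replacing the maximum of each parabolic factor with the minimum.

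First I would write $\gamma = (\gamma_{1}, \ldots, \gamma_{k})$ where $\gamma_{i} \in \SSSS_{R_{i}}$, so that each $\gamma_{i}$ acts on the row interval $R_{i} = [u,v]$ with $u = c_{1} + \cdots + c_{i-1} + 1$ and $v = c_{1} + \cdots + c_{i}$. Next I would invoke the hypothesis $\Des(\alpha) = \cv$: the descent positions of $\alpha$ are precisely the breakpoints $c_{1}, c_{1}+c_{2}, \ldots, c_{1}+\cdots+c_{k-1}$, none of which lies strictly inside any $R_{i}$. Consequently $\alpha_{u} < \alpha_{u+1} < \cdots < \alpha_{v}$, so $\alpha$ restricted to $R_{i}$ is the identity of $\SSSS_{R_{i}}$, which is the unique minimum element of $\SSSS_{R_{i}}$ in the (right) weak Bruhat order. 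Applying $\gamma_{i}$ on the right therefore yields a larger or equal element of $\SSSS_{R_{i}}$.

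The inequality $\alpha \leq \alpha \circ \gamma$ on $\SSSS_{n}$ now follows by concatenating these partial permutations, exactly as in Lemma~\ref{lemma_Bruhat}. The only point of care is the concatenation itself: since $\gamma$ permutes positions only within rows, the multiset of values $\{\alpha_{j} : j \in R_{i}\}$ is unchanged by $\gamma_{i}$, so any inversion between a position in $R_{i}$ and a position in $R_{i'}$ with $i \neq i'$ is preserved, while new inversions may only be created inside each row. I do not foresee a genuine obstacle; the argument is essentially a verbatim dual of Lemma~\ref{lemma_Bruhat}, with the fact that rows are the ascending runs of $\alpha$ playing the role previously played by the descending runs along columns.
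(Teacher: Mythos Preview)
Your proposal is correct and matches the paper's approach exactly: the paper does not supply a separate proof but simply states this lemma as the dual of Lemma~\ref{lemma_Bruhat}, and your argument is precisely that dualization---factor $\gamma$ over the row intervals $R_{i}$, use $\Des(\alpha)=\cv$ to see that $\alpha$ is increasing on each $R_{i}$ (hence the minimum of each parabolic factor), and concatenate. Your extra remark that cross-row inversions are controlled because $\gamma$ preserves the value multisets $\{\alpha_{j}:j\in R_{i}\}$ is a helpful justification for the concatenation step, at the same level of rigor as the paper's own proof of Lemma~\ref{lemma_Bruhat}.
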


Directly we have the next lemma.
\begin{lemma}
Let $F$ be a facet of $\Sigma_{\alpha}$.
Then the inequality
$\sigma^{-1}(F) \leq \alpha$ holds in the weak Bruhat order.
\label{lemma_inequality}
\end{lemma}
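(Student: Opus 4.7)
The plan is to chain together the two previous lemmas. By the definition of $\Sigma_{\alpha}$, any facet $F$ has the form $F = \sigma(\alpha \circ \gamma)$ for some $\gamma$ in the column stabilizer $\SSSS_{\cv}^{c}$. In particular, $\alpha \circ \gamma$ is one of the $\cv\,!$ permutations in the preimage $\sigma^{-1}(\{F\})$.

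First I would recall that $\sigma^{-1}(F)$ was defined in Section~\ref{section_Delta} as the \emph{minimum} element of this preimage with respect to the weak Bruhat order. Thus directly from the definition we have
\[
    \sigma^{-1}(F) \leq \alpha \circ \gamma .
\]
Next I would invoke Lemma~\ref{lemma_Bruhat}, which, under the standing hypothesis of this section that $\Des(\alpha) = \cv$, asserts that right-multiplication by any $\gamma \in \SSSS_{\cv}^{c}$ can only weakly decrease $\alpha$ in the weak Bruhat order, that is,
\[
    \alpha \circ \gamma \leq \alpha .
\]
Combining these two inequalities yields $\sigma^{-1}(F) \leq \alpha$, as desired.

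Since both ingredients are already in place, there is essentially no obstacle to overcome; the lemma is genuinely a direct corollary, which is consistent with the paper's introductory phrase ``Directly we have the next lemma.'' The only point worth being careful about is confirming that the hypothesis on the descent composition of $\alpha$ is in force (which it is, as this section is working with permutations $\alpha$ satisfying $\Des(\alpha) = \cv$), so that Lemma~\ref{lemma_Bruhat} is legitimately applicable.
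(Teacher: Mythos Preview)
Your argument is correct and matches the paper's proof essentially line for line: write $F = \sigma(\alpha \circ \gamma)$ for some $\gamma \in \SSSS_{\cv}^{c}$, use the minimality of $\sigma^{-1}(F)$ to get $\sigma^{-1}(F) \leq \alpha \circ \gamma$, and then invoke Lemma~\ref{lemma_Bruhat} for $\alpha \circ \gamma \leq \alpha$. One small remark: the assumption $\Des(\alpha) = \cv$ is not literally a standing hypothesis of the section (and is absent from the lemma's statement), but you are right that it is needed for Lemma~\ref{lemma_Bruhat} and is in force wherever the lemma is applied.
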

\begin{proof}
There is an element $\gamma$
in
the column stabilizer~$\SSSS_{\cv}^{c}$
such that $F = \sigma(\alpha \circ \gamma)$.
Since $\sigma^{-1}(F)$ is the smallest permutation
in the Bruhat order that maps to $F$, we have that
$\sigma^{-1}(F) \leq \alpha \circ \gamma \leq \alpha$.
\end{proof}

Recall that the boundary map of the face
$\sigma = (C_{1}, \ldots, C_{r})$
in the chain complex of $\Delta_{\cv}$ is defined
by
$$     \partial((C_{1}, \ldots, C_{r}))
     =
       \sum_{i=1}^{r-1}
                 (-1)^{i-1}
              \cdot
                 (C_{1}, \ldots, C_{i} \cup C_{i+1}, \ldots, C_{r}) . $$

The next lemma follows from
Lemma~\ref{lemma_sphere_I}, apart from the signs.
\begin{lemma}
For $\alpha \in \SSSS_{n}$, the element
$$ g_\alpha = \sum_{\gamma \in \SSSS_{\cv}^{c}}
            (-1)^{\gamma}
               \cdot
            \sigma(\alpha \circ \gamma)     $$
in the chain group $C_{k-2}(\Delta_{\cv})$
belongs to the kernel of the boundary map
and hence to the homology group $\HH_{k-2}(\Delta_{\cv})$.
\label{lemma_boundary_map}
\end{lemma}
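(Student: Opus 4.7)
The plan is to verify directly that $\partial g_\alpha = 0$ by expanding the boundary and pairing off terms in an involution that reverses signs. Writing $\sigma(\alpha \circ \gamma) = (C_1, \ldots, C_k)$ and letting $F_i^\gamma$ denote the face obtained by merging $C_i$ and $C_{i+1}$, linearity gives
\[
  \partial g_\alpha
   = \sum_{\gamma \in \SSSS_\cv^c}(-1)^\gamma \sum_{i=1}^{k-1}(-1)^{i-1} F_i^\gamma.
\]
The goal is to define a sign-reversing involution on the index set of pairs $(\gamma,i) \in \SSSS_\cv^c \times [k-1]$ under which the associated face $F_i^\gamma$ is preserved.

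For each fixed $i$, set $s = c_1 + \cdots + c_i$, and pair $(\gamma,i)$ with $(\gamma \circ (s,s+1),i)$. This is clearly a fixed-point-free involution on the index set. Since $\sigma$ records only the unordered set of values in each $R_j$, the two partitions $\sigma(\alpha \circ \gamma)$ and $\sigma(\alpha \circ \gamma \circ (s,s+1))$ differ only by swapping one element between $C_i$ and $C_{i+1}$; after merging $C_i \cup C_{i+1}$, the resulting face is unchanged, so $F_i^\gamma = F_i^{\gamma \circ (s,s+1)}$. Since the two permutations differ by an adjacent transposition, their signs differ, and the corresponding contributions to $\partial g_\alpha$ cancel.

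The main obstacle is verifying that the pairing is well-defined, that is, $(s,s+1) \in \SSSS_\cv^c$, which requires $s$ and $s+1$ to lie in a common interval $K_j$. Observe that the breaks of $\cv$ and the breaks of $\cvc$ form complementary subsets of $[n-1]$, by the defining relations $\cv \meet \cvc = (1,\ldots,1)$ and $\cv \join \cvc = (n)$. Since $s = c_1 + \cdots + c_i$ is a break of $\cv$, it is not a break of $\cvc$, hence $s$ and $s+1$ lie in the same $K_j$ and $(s,s+1) \in \SSSS_{K_j} \subseteq \SSSS_\cv^c$. This matches the expectation from Lemma~\ref{lemma_sphere_I}: $\Sigma_\alpha$ is a $(k-2)$-sphere, so its top chain group carries a one-dimensional cycle space, and $g_\alpha$ realizes the fundamental cycle of the join $P_{|K_1|} * \cdots * P_{|K_{n-k+1}|}$, with the sign $(-1)^\gamma = \prod_j (-1)^{\gamma_j}$ for $\gamma = (\gamma_1,\ldots,\gamma_{n-k+1})$ reflecting the multiplicative behavior of fundamental cycles under joins.
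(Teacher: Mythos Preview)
Your proof is correct and follows essentially the same argument as the paper: expand $\partial g_\alpha$, and for each boundary index $i$ cancel the term for $\gamma$ against the term for $\gamma \circ (c_1+\cdots+c_i,\; c_1+\cdots+c_i+1)$, using that this transposition lies in the column stabilizer $\SSSS_\cv^c$. Your explicit justification that $s = c_1+\cdots+c_i$ is a break of $\cv$ and hence not a break of $\cvc$ is a detail the paper leaves implicit, and your closing remark tying $g_\alpha$ to the fundamental cycle of the join in Lemma~\ref{lemma_sphere_I} is a nice observation but not needed for the argument.
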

\begin{proof}
Apply the boundary map to the above element $g_{\alpha}$
in the chain
group and exchange the order of the two sums.
The inner sum is then
$$   \sum_{\gamma \in \SSSS_{\cv}^{c}}
            (-1)^{\gamma}
               \cdot
      (\ldots,
        \{ \ldots, \alpha_{\gamma_{c_{1}+\cdots+c_{i}}} ,
           \alpha_{\gamma_{c_{1}+\cdots+c_{i}+1}}, \ldots \},
       \ldots)   .   $$
Observe that the term corresponding to $\gamma$
cancels with the term corresponding to
$\gamma$ composed with
the transposition $(c_{1}+\cdots+c_{i}, c_{1}+\cdots+c_{i}+1)$
which belongs to the column stabilizer~$\SSSS_{\cv}^{c}$
and thus the sum vanishes.
\end{proof}

\begin{theorem}
The cycles $g_{\alpha}$,
where $\alpha$ ranges over all permutations
with descent composition $\cv$, form a basis
for the homology group $\HH_{k-2}(\Delta_{\cv})$.
\label{theorem_homology}
\end{theorem}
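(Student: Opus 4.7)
The plan is to use Theorem \ref{theorem_shelling}, which tells us that $\HH_{k-2}(\Delta_\cv)$ has rank $\beta(\cv)$, equal to the number of permutations with descent composition $\cv$. Since each $g_\alpha$ is a cycle by Lemma \ref{lemma_boundary_map}, and there are exactly $\beta(\cv)$ such cycles, it suffices to establish their linear independence in the chain group.

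To do so, I would consider the square matrix $M = (M_{\alpha,\beta})$ indexed by pairs of permutations with $\Des(\alpha) = \Des(\beta) = \cv$, where $M_{\alpha,\beta}$ is the coefficient of the facet $\sigma(\beta)$ in the expansion of $g_\alpha$. Since the facets $\sigma(\beta)$ with $\Des(\beta) = \cv$ form part of a basis of the chain group $C_{k-2}(\Delta_\cv)$, invertibility of $M$ will force linear independence of the $g_\alpha$. My goal is to prove that $M$ is unitriangular with respect to any linear extension of the weak Bruhat order on $\SSSS_n$.

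For the diagonal entry, $M_{\alpha,\alpha}$ is a signed count of $\gamma \in \SSSS_\cv^c$ satisfying $\sigma(\alpha\gamma) = \sigma(\alpha)$. This equality forces $\gamma$ to preserve each row block of $\alpha$, so $\gamma \in \SSSS_\cv \cap \SSSS_\cv^c = \{e\}$, since each row and column of the border strip meet in at most one box; hence $M_{\alpha,\alpha} = 1$. For the triangularity, suppose $M_{\alpha,\beta} \neq 0$, so there exists $\gamma \in \SSSS_\cv^c$ with $\sigma(\alpha\gamma) = \sigma(\beta)$. Because $\Des(\beta) = \cv$, the permutation $\beta$ is increasing on each row block, so $\beta = \sigma^{-1}(\sigma(\beta)) = \sigma^{-1}(\sigma(\alpha\gamma))$, which by definition is the minimum in the weak Bruhat order among preimages of this facet. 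Combined with Lemma \ref{lemma_Bruhat}, which gives $\alpha\gamma \leq \alpha$, this yields $\beta \leq \alpha\gamma \leq \alpha$, establishing the triangularity and hence the basis claim.

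The main obstacle is identifying an indexing order compatible both with the column-stabilizer bound of Lemma \ref{lemma_Bruhat} and with the minimality property characterizing $\sigma^{-1}$. The weak Bruhat order accommodates both simultaneously, which is the crux of the argument; once that is in place, the unitriangularity of $M$, and therefore the basis property of $\{g_\alpha\}$, follows immediately.
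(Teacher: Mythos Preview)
Your argument is correct and is essentially the paper's own proof: both show that the matrix recording the coefficient of the spanning facet $\sigma(\beta)$ in $g_\alpha$ (for $\Des(\alpha)=\Des(\beta)=\cv$) is unitriangular with respect to the weak Bruhat order, using Lemma~\ref{lemma_Bruhat} together with the minimality characterization of $\sigma^{-1}$ exactly as you do (this is the content of the paper's Lemma~\ref{lemma_inequality}). The only cosmetic difference is that the paper first contracts the non-spanning subcomplex to a point, obtaining an explicit isomorphism $f_*$ onto the homology of a wedge of spheres, and reads off the same unitriangular matrix there, whereas you work directly in the chain group. One small caution: over $\mathbb{Z}$, ``linearly independent of the correct rank'' does not by itself give a $\mathbb{Z}$-basis; the paper's route through $f_*$ handles this automatically, and your unitriangularity does too once you note that the projection onto spanning facets is injective on top cycles (any cycle supported on non-spanning facets lies in a contractible subcomplex of the same dimension, hence vanishes).
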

\begin{proof}
The complex $\Delta_{\cv} -
   \{\sigma(\alpha) : \Des(\alpha) = \cv\}$
is contractible by the shelling in
the proof of Theorem~\ref{theorem_shelling}.
Contract this complex to a point and then attach the cells
$\sigma(\alpha)$ to this point to obtain a wedge of spheres,
denoted by $X$.
Call this contraction map $f$, that is,
we have the continuous function
$f: \Delta_{\cv} \longrightarrow X$
and hence
the homomorphism
$f_{*}: \HH_{k-2}(\Delta_{\cv}) \longrightarrow \HH_{k-2}(X)$.
Since $f$ is a part of a homotopy equivalence, we know
that $f_{*}$ is an isomorphism.
Lastly, let $h_{\alpha}$ denote the cycle corresponding
to the sphere~$\sigma(\alpha)$ in the homology group
$\HH_{k-2}(X)$.

It is clear that the cycles $h_{\alpha}$,
where $\alpha$ ranges over all permutations with descent composition
$\cv$,
form a basis for the homology group
$\HH_{k-2}(X)$.
We now motivate that the images
$f_{*}(g_{\alpha})$ also form a basis for the homology group
$\HH_{k-2}(X)$.

Take two permutations $\alpha$ and $\alpha^{\prime}$
with descent composition $\cv$.
By Lemma~\ref{lemma_inequality}
we have that the coefficient of $h_{\alpha^{\prime}}$
in $f_{*}(g_{\alpha})$ is zero
unless $\alpha^{\prime} \leq \alpha$ in the weak Bruhat order.
Furthermore, the coefficient of $h_{\alpha}$
in $f_{*}(g_{\alpha})$ is $1$.
Hence the relationship between
the basis $\left\{h_{\alpha^{\prime}}\right\}_{\alpha^{\prime}}$
and the set $\left\{f_{*}(g_{\alpha})\right\}_{\alpha}$
is triangular and hence invertible.
Thus the set
$\left\{f_{*}(g_{\alpha})\right\}_{\alpha}$ forms a basis for
the homology group $\HH_{k-2}(X)$.
Finally, since~$f_{*}$ is an isomorphism,
the cycles $g_{\alpha}$ form a basis for the homology group
$\HH_{k-2}(\Delta_{\cv})$.
\end{proof}

\section{Representation of the symmetric group}
\label{section_group}

The symmetric group~$\SSSS_{n}$ acts naturally
on the poset $\Pi^{\bullet}_{\cv}$ by relabeling
the elements. Hence it also acts on the order complex
$\Delta(\Pi^{\bullet}_{\cv} - \{\ho\})$. Lastly,
the symmetric group acts on the top homology
group $\HH_{k-2}(\Delta(\Pi^{\bullet}_{\cv} - \{\ho\}))$.
We show in this section that this action is a Specht module
of the border strip $B$ corresponding to the composition $\cv$.
For an overview on the representation theory of the symmetric group,
we refer the reader to Sagan's book~\cite{Sagan}
and Wachs' article~\cite{Wachs_III}.

Let $G$ be a group which acts on
the simplicial complex $\Gamma$
and on the poset $P$.
We call the simplicial map $f$
a {\em $G$-simplicial map} if the map $f$ commutes
with this action.
The equivariant version of Quillen's Fiber Lemma
is as follows~\cite{Quillen}.
See also~\cite[Section~5.2]{Wachs_III}.
\begin{theorem}[Equivariant homology version of Quillen's Fiber Lemma]
Let $f$ be a $G$-simplicial map from the
simplicial complex $\Gamma$ to the poset $P$
such that for all $x$ in $P$, the subcomplex
$\Delta(f^{-1}(P_{\geq x}))$ is acyclic.
Then the two homology groups
$\HH_{r}(\Delta(P))$ and $\HH_{r}(\Gamma)$ are
isomorphic as $G$-modules.
\label{theorem_equivariant_Quillen}
\end{theorem}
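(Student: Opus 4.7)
The plan is to reduce the equivariant statement to the non-equivariant Quillen Fiber Lemma by building a $G$-equivariant intermediate poset that dominates both $\Gamma$ and $P$ via maps whose fibers are respectively contractible and acyclic. Since every step of the reduction is functorial in $G$, the resulting homology isomorphism will automatically intertwine the two $G$-actions.

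First I would pass to the barycentric subdivision $\sd(\Gamma) = \Delta(\mathcal{F}(\Gamma))$, where $\mathcal{F}(\Gamma)$ is the face poset of $\Gamma$ with the induced $G$-action; since $\Gamma$ and $\sd(\Gamma)$ are $G$-homeomorphic, their reduced homology groups agree as $G$-modules. The simplicial map $f$ then corresponds to the $G$-equivariant poset map $\widetilde f : \mathcal{F}(\Gamma) \to P$ defined by $\widetilde f(\sigma) = \min f(\sigma)$.

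Next I would introduce the $G$-poset of compatible pairs
$$ M = \{(\sigma, x) \in \mathcal{F}(\Gamma) \times P \: : \: x \leq \widetilde f(\sigma)\} , $$
endowed with the componentwise order and the diagonal $G$-action. The coordinate projections $p_1 : M \to \mathcal{F}(\Gamma)$ and $p_2 : M \to P$ are $G$-equivariant poset maps, and they admit convenient fibers. Using that $\sigma \leq \sigma'$ in $\mathcal{F}(\Gamma)$ forces $\widetilde f(\sigma) \geq \widetilde f(\sigma')$, the assignment $(\sigma', x) \mapsto (\sigma, x)$ is a $G$-equivariant poset retraction of $p_1^{-1}(\mathcal{F}(\Gamma)_{\geq \sigma})$ onto $\{\sigma\} \times P_{\leq \widetilde f(\sigma)}$, which has a maximum and whose order complex is therefore contractible. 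Similarly, $(\sigma, x) \mapsto (\sigma, x_0)$ is a $G$-equivariant poset retraction of $p_2^{-1}(P_{\geq x_0})$ onto $\widetilde f^{-1}(P_{\geq x_0}) \times \{x_0\}$, whose order complex coincides with $\sd(f^{-1}(P_{\geq x_0}))$ and is therefore acyclic by hypothesis. Applying the original Quillen Fiber Lemma to $p_1$ and its homological refinement to $p_2$ would produce a chain of $G$-equivariant isomorphisms $\HH_{r}(\Gamma) \cong \HH_{r}(\Delta(M)) \cong \HH_{r}(\Delta(P))$.

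The main obstacle is the homological refinement of Quillen's Fiber Lemma needed for $p_2$, since the version recalled earlier in the paper requires contractible rather than merely acyclic fibers. To justify this refinement, I would invoke the Leray spectral sequence of the poset map $p_2 : \Delta(M) \to \Delta(P)$: filtering $\Delta(M)$ by the preimages of the skeleta of $\Delta(P)$ yields a spectral sequence whose $E^{1}$-page vanishes in all rows but $q = 0$ by acyclicity of the fibers, and whose surviving row satisfies $E^{2}_{p,0} \cong H_{p}(\Delta(P))$. The spectral sequence collapses at $E^{2}$, giving the desired isomorphism on total homology, and because the filtration and spectral sequence are constructed functorially in $G$, the resulting isomorphism is $G$-equivariant, as required.
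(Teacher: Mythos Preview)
The paper does not prove this theorem; it is quoted from the literature with citations to Quillen's original paper and to Wachs' survey, so there is no in-paper argument to compare against.  Your argument is a correct reconstruction of the standard proof via an intermediate ``comma'' poset $M$ and the two equivariant projections.

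Two small wording points are worth tightening.  First, the map $\widetilde f(\sigma) = \min f(\sigma)$ is order-\emph{reversing}, not a poset map in the usual sense; you use this reversal correctly in the fiber computations, so it is only a terminological slip.  Second, the retractions you write down on each individual fiber are not themselves $G$-equivariant (they depend on the chosen basepoint $\sigma$ or $x_0$, which is not $G$-fixed), and they need not be: what carries the equivariance is simply that the projections $p_1$ and $p_2$ are $G$-maps, so the induced homomorphisms $(\Delta p_i)_*$ on homology are automatically $G$-module maps, and your fiber analysis shows they are isomorphisms.  Your spectral-sequence sketch for passing from contractible to merely acyclic fibers is the correct mechanism and is precisely how the homological refinement is established in the references the paper cites.
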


The forgetful map $\phi$ from
$\sd(\Delta_{\cv})$ to
the order complex of the poset $\Pi^{\bullet}_{\cv} - \{\ho\}$
commutes with the action of the symmetric group~$\SSSS_{n}$.
Hence we conclude the next result.

\begin{proposition}
The two homology groups
$\HH_{k-2}(\sd(\Delta_{\cv}))$ and
$\HH_{k-2}(\Delta(\Pi^{\bullet}_{\cv} - \{\ho\}))$
are isomorphic as $\SSSS_{n}$-modules.
\label{proposition_homology_modules}
\end{proposition}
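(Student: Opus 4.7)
The plan is to invoke the equivariant version of Quillen's Fiber Lemma (Theorem~\ref{theorem_equivariant_Quillen}) applied to the forgetful map~$\phi$, in direct parallel with the proof of Theorem~\ref{theorem_pointed_partitions}. Essentially no new topological work is required: the contractibility of the fibers is already supplied by Lemma~\ref{lemma_Omega}, and the one genuinely new ingredient to establish is equivariance of the map.

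First I would cast the forgetful map as a $G$-simplicial map
$$ \phi : \sd(\Delta_{\cv}) \longrightarrow \Pi^{\bullet}_{\cv} - \{\ho\} $$
with $G = \SSSS_{n}$. A vertex of $\sd(\Delta_{\cv})$ is a non-empty face of~$\Delta_{\cv}$, that is, an ordered set partition $\tau = (C_{1}, \ldots, C_{m})$ with $m \geq 2$ whose type refines to a composition that is $\geq \cv$; send~$\tau$ to $\phi(\tau) = \{C_{1}, \ldots, C_{m-1}, \underline{C_{m}}\}$. Since $m \geq 2$, the image avoids the maximum $\underline{[n]} = \ho$, and since $\type(\tau) \geq \cv$ the image lies in~$\Pi^{\bullet}_{\cv}$. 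If $F_{1} \subsetneq F_{2}$ are faces of $\Delta_{\cv}$, then $F_{1}$ is obtained from $F_{2}$ by merging adjacent blocks, hence $\phi(F_{1}) \geq \phi(F_{2})$ in $\Pi^{\bullet}_{n}$; thus chains of faces of $\Delta_{\cv}$ map to chains in $\Pi^{\bullet}_{\cv} - \{\ho\}$, as a simplicial-map-to-poset requires.

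Next I would check the two hypotheses of Theorem~\ref{theorem_equivariant_Quillen}. Equivariance is routine: $\SSSS_{n}$ acts on both complexes by relabeling the underlying set $[n]$, and~$\phi$ is defined purely in terms of blocks, so $\phi(g \cdot \tau) = g \cdot \phi(\tau)$ for every $g \in \SSSS_{n}$. For the fiber condition, the induced subcomplex of $\sd(\Delta_{\cv})$ on the vertex set $\phi^{-1}((\Pi^{\bullet}_{\cv} - \{\ho\})_{\geq \pi})$ is precisely the barycentric subdivision of the subcomplex $\Omega \subseteq \Delta_{\cv}$ of Lemma~\ref{lemma_Omega}, and is therefore homotopy equivalent to~$\Omega$. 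Lemma~\ref{lemma_Omega} exhibits $\Omega$ as a cone, hence contractible, hence acyclic.

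Applying Theorem~\ref{theorem_equivariant_Quillen} then yields the desired $\SSSS_{n}$-module isomorphism. The main subtlety, rather than an obstacle, is keeping straight that $\phi$ is \emph{order-reversing} from the face poset of $\Delta_{\cv}$ to the pointed partition lattice, so that the Quillen upset preimage $\phi^{-1}(P_{\geq \pi})$ is a downward-closed set of faces and thus an honest subcomplex; this is exactly the set-up under which Lemma~\ref{lemma_Omega} was stated, so nothing needs to be redone.
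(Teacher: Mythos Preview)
Your proposal is correct and takes essentially the same approach as the paper: the paper simply remarks that the forgetful map $\phi$ from $\sd(\Delta_{\cv})$ to $\Pi^{\bullet}_{\cv} - \{\ho\}$ commutes with the $\SSSS_{n}$-action and then invokes Theorem~\ref{theorem_equivariant_Quillen}, relying implicitly on the fiber contractibility already proved in Lemma~\ref{lemma_Omega}. You have spelled out the details (simpliciality, equivariance, the order-reversal subtlety) that the paper leaves to the reader.
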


It is clear that $\HH_{k-2}(\sd(\Delta_{\cv}))$ and
$\HH_{k-2}(\Delta_{\cv})$ are isomorphic as $\SSSS_{n}$-modules.
Hence in the remainder of this section
we will study the action of the symmetric group
$\SSSS_{n}$ on
$\Delta_{\cv}$ and its action on the homology group
$\HH_{k-2}(\Delta_{\cv})$.
This is in the spirit of Wachs' work~\cite{Wachs}.

Let $B$ be the border strip that has $k$ rows where
the $i$th row consists of $c_{i}$ boxes.
Recall that a tableau is a filling of the boxes of the shape
$B$ with the integers $1$ through $n$.
A standard Young tableau is a tableau where the rows
and columns are increasing.
A tabloid is an equivalence class of tableaux under the relation
of permuting the entries in each row.
To distinguish tabloids from tableaux, only
the horizontal lines are drawn in a tabloid.
See~\cite[Section~2.1]{Sagan} for details.

Observe that there is a natural bijection
between tabloids of shape $B$ and facets of the complex~$\Delta_{\cv}$
by letting the elements in each row form a block
and letting the order of the blocks
go from lowest to highest row.
See Figure~\ref{figure_border_strip} for an example.
Let $M^{B}$ be the permutation module
corresponding to shape $B$, that is,
the linear span of all tabloids of shape $B$.
Notice that the above bijection
induces a ${\mathfrak S}_{n}$-module isomorphism between
the permutation module $M^{B}$
and the chain group $C_{k-2}(\Delta_{\cv})$.

\begin{figure}[t]
\setlength{\unitlength}{0.7mm}
\begin{center}
\begin{picture}(60,50)(0,0)

\thicklines

\put(0,0){\line(1,0){20}}
\put(0,10){\line(1,0){40}}
\put(10,20){\line(1,0){30}}
\put(30,30){\line(1,0){10}}
\put(30,40){\line(1,0){30}}
\put(30,50){\line(1,0){30}}

\put(0,0){\line(0,1){10}}
\put(10,0){\line(0,1){20}}
\put(20,0){\line(0,1){20}}
\put(30,10){\line(0,1){40}}
\put(40,10){\line(0,1){40}}
\put(50,40){\line(0,1){10}}
\put(60,40){\line(0,1){10}}

\end{picture}
\hspace{10mm}
\begin{picture}(60,50)(0,0)

\put(0,0){\line(1,0){20}}
\put(0,10){\line(1,0){40}}
\put(10,20){\line(1,0){30}}
\put(30,30){\line(1,0){10}}
\put(30,40){\line(1,0){30}}
\put(30,50){\line(1,0){30}}

\put(0,0){\makebox(10,10)[c]{9}}
\put(10,0){\makebox(10,10)[c]{5}}
\put(10,10){\makebox(10,10)[c]{1}}
\put(20,10){\makebox(10,10)[c]{6}}
\put(30,10){\makebox(10,10)[c]{4}}
\put(30,20){\makebox(10,10)[c]{10}}
\put(30,30){\makebox(10,10)[c]{2}}
\put(30,40){\makebox(10,10)[c]{7}}
\put(40,40){\makebox(10,10)[c]{3}}
\put(50,40){\makebox(10,10)[c]{8}}

\end{picture}
\hspace{10mm}
\begin{picture}(60,50)(0,0)

\put(0,0){\line(1,0){20}}
\put(0,10){\line(1,0){40}}
\put(10,20){\line(1,0){30}}
\put(30,30){\line(1,0){10}}
\put(30,40){\line(1,0){30}}
\put(30,50){\line(1,0){30}}
\put(0,0){\line(0,1){10}}
\put(10,0){\line(0,1){20}}
\put(20,0){\line(0,1){20}}
\put(30,10){\line(0,1){40}}
\put(40,10){\line(0,1){40}}
\put(50,40){\line(0,1){10}}
\put(60,40){\line(0,1){10}}

\put(0,0){\makebox(10,10)[c]{5}}
\put(10,0){\makebox(10,10)[c]{7}}
\put(10,10){\makebox(10,10)[c]{4}}
\put(20,10){\makebox(10,10)[c]{10}}
\put(30,10){\makebox(10,10)[c]{8}}
\put(30,20){\makebox(10,10)[c]{1}}
\put(30,30){\makebox(10,10)[c]{3}}
\put(30,40){\makebox(10,10)[c]{6}}
\put(40,40){\makebox(10,10)[c]{2}}
\put(50,40){\makebox(10,10)[c]{9}}

\end{picture}
\end{center}
\caption{(a) Border strip corresponding to composition $(2,3,1,1,3)$.
(b) The tabloid corresponding to the face
$5\:9-1\:4\:6-10-2-3\:7\:8$.
(c) The tableau corresponding to the permutation
5\:7\:4\:10\:8\:1\:3\:6\:2\:9.}
\label{figure_border_strip}
\end{figure}
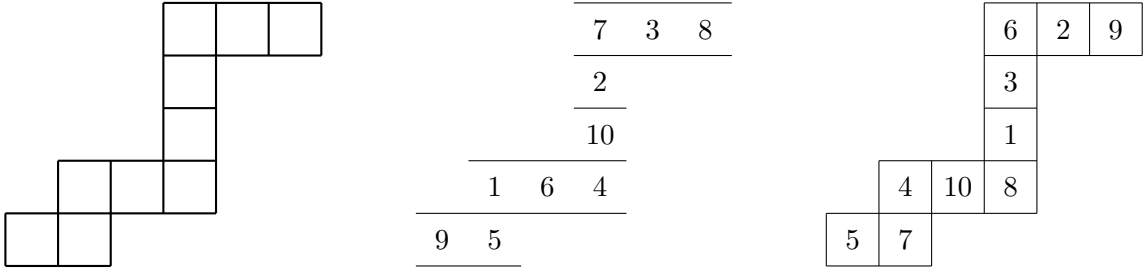

Furthermore, there is a bijection between tableaux
of shape $B$ and permutations by reading
the elements in the northeast direction from the border
strip.
Recall that the group
$\SSSS_{\cv}^{c} =
 \SSSS_{K_{1}} \timesdots \SSSS_{K_{n-k+1}}$
is the column stabilizer of the border strip $B$.
Let $t$ be a tableau and $\alpha$ its associated permutation.
Hence the polytabloid ${\bf e}_{t}$
corresponding to the tableau $t$
is the element $g_\alpha$ presented in
Lemma~\ref{lemma_boundary_map};
see~\cite[Definition~2.3.2]{Sagan}.
Since the Specht module~$S^{B}$ is the submodule of $M^{B}$ spanned
by all polytabloids,
Lemma~\ref{lemma_boundary_map}
proves that
the Specht module~$S^{B}$ is isomorphic to a submodule of
the kernel of the boundary map $\partial_{k-2}$.
Since the kernel is the top homology group
$\HH_{k-2}(\Delta_{\cv})$,
and
the Specht module~$S^{B}$
and the homology group
$\HH_{k-2}(\Delta_{\cv})$ have the same dimension $\beta(\cv)$,
we conclude that they are isomorphic.
To summarize we have:
\begin{proposition}
The top homology group $\HH_{k-2}(\Delta_{\cv})$
is isomorphic to the Specht module~$S^{B}$
as $\SSSS_{n}$-modules.
\label{proposition_Specht}
\end{proposition}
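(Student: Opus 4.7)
The plan is to work entirely at the chain level and identify $C_{k-2}(\Delta_{\cv})$ with the permutation module $M^{B}$ in a way that carries polytabloids to the cycles $g_{\alpha}$ already constructed in Section~\ref{section_cycles}. Concretely, the natural bijection between tabloids of shape $B$ and facets of $\Delta_{\cv}$ (reading row $i$ of the tabloid as the $i$th block $C_{i}$ of the ordered partition, from bottom to top) induces an $\SSSS_{n}$-equivariant linear isomorphism $\Phi : M^{B} \longrightarrow C_{k-2}(\Delta_{\cv})$. I would begin by checking that $\Phi$ is indeed $\SSSS_{n}$-equivariant, which is immediate once one observes that both actions are simply given by relabeling entries.

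Next I would translate the polytabloid. Reading a tableau $t$ northeastward along the border strip produces a permutation $\alpha = \alpha(t)$, and the column stabilizer of $B$ is precisely the group $\SSSS_{\cv}^{c}$ from Section~\ref{section_Delta}. Under these identifications, the polytabloid
\[
    {\bf e}_{t} \;=\; \sum_{\gamma \in \SSSS_{\cv}^{c}} (-1)^{\gamma} \cdot \gamma \cdot \{t\}
\]
is carried by $\Phi$ to exactly the element $g_{\alpha}$ defined in Lemma~\ref{lemma_boundary_map}. Therefore $\Phi(S^{B}) \subseteq \ker(\partial_{k-2}) = \HH_{k-2}(\Delta_{\cv})$, since Lemma~\ref{lemma_boundary_map} already guarantees that each $g_{\alpha}$ is a cycle. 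This gives an injective $\SSSS_{n}$-equivariant map $S^{B} \hookrightarrow \HH_{k-2}(\Delta_{\cv})$.

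To conclude, I would compare dimensions. The dimension of $S^{B}$ equals the number of standard Young tableaux of shape $B$, which in turn equals the number of permutations in $\SSSS_{n}$ with descent composition $\cv$, namely $\beta(\cv)$. By Theorem~\ref{theorem_homology}, this is also the dimension of $\HH_{k-2}(\Delta_{\cv})$. Hence the injection is an isomorphism of $\SSSS_{n}$-modules. Combined with Proposition~\ref{proposition_homology_modules}, this yields the desired statement for $\HH_{k-2}(\Delta(\Pi^{\bullet}_{\cv} - \{\ho\}))$ as well.

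The only subtle point, and the step I would be most careful about, is verifying that the sign conventions in the polytabloid $(-1)^{\gamma}$ match precisely the signs in $g_{\alpha}$ under the tabloid-to-facet bijection; this requires fixing a consistent orientation of the facet $\sigma(\alpha)$ so that permuting entries within columns of $B$ acts by the sign of the permutation on the corresponding oriented simplex. Once that bookkeeping is in place, the rest of the argument is an equivariant embedding followed by a dimension count, and no further topological input is needed beyond Lemma~\ref{lemma_boundary_map} and Theorem~\ref{theorem_homology}.
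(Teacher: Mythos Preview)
Your proposal is correct and follows essentially the same route as the paper: identify $M^{B}$ with $C_{k-2}(\Delta_{\cv})$ via the tabloid--facet bijection, observe that under this identification polytabloids go to the cycles $g_{\alpha}$ of Lemma~\ref{lemma_boundary_map}, and then conclude by a dimension count. The paper's argument is slightly terser (it simply asserts the dimensions agree rather than invoking Theorem~\ref{theorem_homology}), and it does not pause over the sign bookkeeping you flag, but the structure is identical.
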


By combining Propositions~\ref{proposition_homology_modules}
and~\ref{proposition_Specht}, the main result of this section
follows.
\begin{theorem}
The top homology group $\HH_{k-2}(\Delta(\Pi^{\bullet}_{\cv} - \{\ho\}))$
is isomorphic to the Specht module~$S^{B}$
as $\SSSS_{n}$-modules.
\label{theorem_Specht}
\end{theorem}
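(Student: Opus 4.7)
The plan is to assemble the proof by chaining together the two propositions that have just been established, together with the homeomorphism between a complex and its barycentric subdivision. Concretely, I would exhibit a chain of $\SSSS_{n}$-equivariant isomorphisms
\begin{equation*}
  \HH_{k-2}\bigl(\Delta(\Pi^{\bullet}_{\cv}-\{\ho\})\bigr)
   \;\cong\;
   \HH_{k-2}(\sd(\Delta_{\cv}))
   \;\cong\;
   \HH_{k-2}(\Delta_{\cv})
   \;\cong\;
   S^{B},
\end{equation*}
and argue that each arrow commutes with the natural $\SSSS_{n}$-action on the relevant complex.

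The first isomorphism is precisely Proposition~\ref{proposition_homology_modules}, which in turn comes from the equivariant Quillen Fiber Lemma (Theorem~\ref{theorem_equivariant_Quillen}) applied to the forgetful map $\phi:\Delta_{n}\longrightarrow\Pi^{\bullet}_{n}$. The hypothesis of that lemma is verified by Lemma~\ref{lemma_Omega}, since the fiber over any $\pi\in\Pi^{\bullet}_{\cv}-\{\ho\}$ is a cone with apex $([n]-B_{m},B_{m})$, which is contractible and \emph{a fortiori} acyclic; equivariance of $\phi$ with respect to the relabeling action is immediate from the definition. For the middle isomorphism I would invoke the standard fact that the barycentric subdivision $\sd(\Delta_{\cv})$ and $\Delta_{\cv}$ have the same geometric realization, so their homology groups coincide, and the identification is compatible with the $\SSSS_{n}$-action because that action is induced by the same symmetric group relabeling on both sides. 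The final isomorphism is Proposition~\ref{proposition_Specht}, obtained by identifying the chain group $C_{k-2}(\Delta_{\cv})$ with the permutation module $M^{B}$ via the tabloid bijection described in Section~\ref{section_group}, and then observing via Lemma~\ref{lemma_boundary_map} that the cycles $g_{\alpha}$ are exactly the polytabloids $\mathbf{e}_{t}$, so that the submodule they span is at once $S^{B}$ and, by Theorem~\ref{theorem_homology}, all of $\HH_{k-2}(\Delta_{\cv})$.

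The potentially subtle step, and the one I would write most carefully, is making sure that the composite map is genuinely $\SSSS_{n}$-equivariant. The two nontrivial ingredients above were proved separately and rely on different identifications (the first on Quillen's Fiber Lemma, the last on the tabloid-facet dictionary), so I would be explicit that the $\SSSS_{n}$-action used in Proposition~\ref{proposition_homology_modules}, in the homeomorphism $\sd(\Delta_{\cv})\simeq\Delta_{\cv}$, and in Proposition~\ref{proposition_Specht} is in every case the relabeling action on underlying sets, and hence that the three isomorphisms can be composed as $\SSSS_{n}$-modules. Once that is noted, the proof of Theorem~\ref{theorem_Specht} collapses into a one-sentence assembly of Propositions~\ref{proposition_homology_modules} and~\ref{proposition_Specht}.
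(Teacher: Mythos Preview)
Your proposal is correct and follows exactly the paper's approach: the paper's proof is literally the one-sentence assembly you describe, combining Propositions~\ref{proposition_homology_modules} and~\ref{proposition_Specht}. Your additional care in spelling out why the intermediate isomorphism $\HH_{k-2}(\sd(\Delta_{\cv}))\cong\HH_{k-2}(\Delta_{\cv})$ is $\SSSS_{n}$-equivariant is a welcome clarification but does not depart from the paper's argument.
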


\section{Filters generated by knapsack partitions}

We now turn our attention to filters in the pointed
partition lattice $\Pi^{\bullet}_{n}$
that are generated by a pointed knapsack partition,
that is, the poset
$\Pi^{\bullet}_{\{\lambda,\underline{m}\}}$,
which was introduced in Section~\ref{section_subposets}.
In order to study this poset we need
the corresponding collection of ordered set partitions.
\begin{definition}
For a pointed knapsack partition
$\{\lambda, \underline{m}\}
=\{\lambda_{1}, \lambda_{2}, \ldots, \lambda_{k}, \underline{m}\}$
of $n$
define the subcomplex $\Lambda_{\{\lambda,\underline{m}\}}$
of the complex of ordered set partitions $\Delta_{n}$
by
$$   \Lambda_{\{\lambda,\underline{m}\}}
    =
\{(C_{1}, \ldots, C_{r-1}, C_{r}) \in \Delta_{n}
   \:\: : \:\:
\{C_{1}, \ldots, C_{r-1}, \underline{C_{r}} \}
\in \Pi^{\bullet}_{\{\lambda,\underline{m}\}}\} . $$
\end{definition}

For a pointed knapsack partition $\{\lambda, \underline{m}\}$
of $n$ define $F$ to be the filter in the poset of
compositions of $n$ generated by compositions
$\cv$ such that $\type(\cv) = \{\lambda, \underline{m}\}$.
Now define $V(\lambda, \underline{m})$ to be
the collection of all pointed compositions
$\cv = (c_{1}, c_{2}, \ldots, c_{r})$ in the filter $F$
such that each $c_{i}, 1\leq i \leq r-1$, is
a sum of distinct parts of the partition $\lambda$ and $c_{r}=m$.
As an example, for $\lambda=\{1,1,3,7\}$
we have
$(4,8,m) \in V(\lambda, \underline{m})$ but
$(2,10,m) \not\in V(\lambda, \underline{m})$.

For a composition $\dv$ in $V(\lambda, \underline{m})$
define $\epsilon(\dv)$ to be the composition
of type $\{\lambda, \underline{m}\}$,
where each entry~$d_{i}$ of $\dv$ has been
replaced with a decreasing list of parts of $\lambda$,
that is,
$$ \epsilon(\dv) = (\lambda_{1,1}, \ldots, \lambda_{1,t_{1}},
\ldots, \lambda_{s,1}, \ldots, \lambda_{s,t_{s}}, m) , $$
where
$\lambda_{i,1} > \lambda_{i,2} > \cdots > \lambda_{i,t_{i}}$,
$\sum_{j=1}^{t_{i}} \lambda_{i,j}=d_{i}$
and
$$ \{\lambda,\underline{m}\}
     =
   \{\lambda_{1,1}, \ldots, \lambda_{1,t_{1}},
   \ldots, \lambda_{s,1}, \ldots, \lambda_{s,t_{s}}, \underline{m}\} . $$

As an example,
for the pointed knapsack partition $\lambda=\{2,1,\underline{1}\}$
we have
$\epsilon((3,1))=(2,1,1)$, $\epsilon((2,1,1))=(2,1,1)$
and $\epsilon((1,2,1))=(1,2,1)$.
Also note $\epsilon(\dv) \leq \dv$ in the partial order of compositions.

Similar to Theorem~\ref{theorem_shelling} we have the following
topological conclusion. However, this time the tool is not shelling,
but discrete Morse theory.
\begin{theorem}
There is a Morse matching on
the simplicial complex $\Lambda_{\{\lambda,\underline{m}\}}$
such that the only critical cells are of the form
$\sigma(\alpha, \epsilon(\dv))$
where
$\dv$ ranges in the set $V(\lambda, \underline{m})$
and
$\alpha$ ranges over all permutations in the symmetric group
$\SSSS_{n}$ with descent composition $\dv$.
Hence, the simplicial complex $\Lambda_{\{\lambda,\underline{m}\}}$
is homotopy equivalent to a wedge of
$\sum_{\dv \in V(\lambda, \underline{m})} \beta(\dv)$
spheres of dimension $k-1$.
\label{theorem_knapsack}
\end{theorem}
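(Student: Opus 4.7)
The plan is to build an explicit acyclic Morse matching on the face poset of $\Lambda_{\{\lambda,\underline{m}\}}$ whose unmatched (critical) cells are precisely the faces $\sigma(\alpha,\epsilon(\dv))$ appearing in the statement, and then to invoke the discrete Morse theorem of Section~\ref{section_preliminaries} to read off the homotopy type. The crucial leverage is the knapsack hypothesis: for every face $\tau = (C_1,\ldots,C_r)$ of $\Lambda_{\{\lambda,\underline{m}\}}$, each $|C_i|$ (with $i<r$) has a unique expression as a sum of distinct parts of $\lambda$, and so does $|C_r|-m$. Consequently $|C_i|+|C_{i+1}|$ has a unique knapsack decomposition too, namely the disjoint union of the two individual ones, which is what allows one to split a block or merge two adjacent ones in a canonical, reversible way.

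The matching will be defined by a left-to-right scan. For each face $\tau$, I search for the smallest index $i^{*}$ at which an action triggers. At position $i$, write the decreasing knapsack decomposition $\mu_1 > \mu_2 > \cdots > \mu_p$ of the combined size at $i$. Position $i$ is passed over whenever the current configuration is already compatible with the local structure of some $\sigma(\alpha,\epsilon(\dv))$: specifically, $|C_i|$ is a single part of $\lambda$ and the ordering of elements across the $C_i,C_{i+1}$ boundary agrees with either the within-run or the run-boundary condition on descents of $\sigma^{-1}(\tau)$ at that point. Otherwise the action is either to split $C_i$ by peeling off its $\mu_1$ smallest elements as a new initial block (when $|C_i|$ is a sum of multiple parts), or to merge $C_i$ with $C_{i+1}$ (when $|C_i|=\mu_1$ alone and the $\mu_1$ smallest elements of $C_i \cup C_{i+1}$ are exactly $C_i$). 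By the uniqueness of the knapsack decomposition, these split and merge operations are strict inverses at the same position, so pairing $\tau$ with the output of its action at $i^{*}$ gives a well-defined matching; faces with no actionable position are declared critical.

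Two verifications remain. First, unwinding the pass-over conditions shows that a face is critical exactly when every $|C_i|$ (with $i<r$) is a single part of $\lambda$, the block-size sequence splits into the maximal strictly-decreasing runs of some $\epsilon(\dv)$, and $\alpha = \sigma^{-1}(\tau)$ has descents exactly at the resulting run boundaries -- precisely the form $\sigma(\alpha,\epsilon(\dv))$ with $\Des(\alpha)=\dv$. Second, acyclicity: each matching move preserves the prefix $C_1,\ldots,C_{i^{*}-1}$, so $i^{*}$ is constant along every edge of the matching, and any alternating cycle would descend to a cycle on the blocks past $i^{*}$, which an induction on the number of blocks (in the spirit of Kozlov's treatment of no-equal-subset-sums partitions~\cite{Kozlov_paper}) rules out. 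The main obstacle is pinning down the scan rule tightly enough that the critical cells land on the nose while the matching remains well-defined and acyclic; the knapsack uniqueness is exactly what makes this delicate bookkeeping work. Once in place, the Morse theorem yields a wedge of spheres in the common critical-cell dimension $k-1$, with multiplicity $\sum_{\dv \in V(\lambda,\underline{m})} \beta(\dv)$.
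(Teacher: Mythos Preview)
Your strategy is the paper's: a discrete Morse matching on the face poset of $\Lambda_{\{\lambda,\underline{m}\}}$ built from a left-to-right scan, with the knapsack hypothesis guaranteeing that splits and merges are canonical. The paper encodes the scan via explicit conditions $\mathbf{A}_i$, $\mathbf{B}_i$ (Lemmas~\ref{lemma_Morse} and~\ref{lemma_cells}) and proves acyclicity by a short ``edge-type decreases along alternating paths'' argument rather than an induction.

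There is, however, a genuine bug in your scan rule: peeling off the \emph{largest} part $\mu_1$ is incompatible with having the critical cells be $\sigma(\alpha,\epsilon(\dv))$ with \emph{decreasing} runs. If $|C_i|$ decomposes as $\mu_1>\mu_2$, your split produces $(X,Y)$ with $|X|=\mu_1>\mu_2=|Y|$ and $\max(X)<\min(Y)$; but that is exactly your ``within-run'' pass-over configuration, so the scan on the new cell skips position $i$ instead of merging back, and the pairing fails to be an involution. Concretely, for $\{\lambda,\underline{m}\}=\{3,1,\underline{1}\}$ the cell $(\{1,2,3,4\},\{5\})$ splits under your rule to $(\{1,2,3\},\{4\},\{5\})$, yet scanning the latter passes over position $1$ (no descent and $3>1$) and merges at position $2$ to $(\{1,2,3\},\{4,5\})$; the two cells you intended to match are not matched to each other. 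The paper avoids this by peeling off the \emph{smallest} part $\kappa(|C_i|)$: after splitting one has $|X|\le\kappa(|Y|)$, which is precisely the merge trigger $\mathbf{A}_i$, while the complementary pass-over condition (``descent or $|C_i|>|C_{i+1}|$'') is exactly what pins the critical cells to $\epsilon(\dv)$. Replacing your $\mu_1$ by the smallest part $\mu_p$ and adjusting the merge test to ``$|C_i|$ is a single part with $|C_i|\le\kappa(|C_{i+1}|)$ and $\max(C_i)<\min(C_{i+1})$'' repairs the argument and recovers the paper's proof.
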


\begin{figure}[t]
\setlength{\unitlength}{0.7mm}
\begin{center}
\begin{picture}(140,140)(0,0)

\put(0,0){\circle*{3}}
\put(120,0){\circle*{3}}
\put(0,120){\circle*{3}}
\put(120,120){\circle*{3}}
\put(40,40){\circle*{3}}
\put(80,40){\circle*{3}}
\put(40,80){\circle*{3}}
\put(80,80){\circle*{3}}

\put(70,20){\circle*{3}}
\put(100,50){\circle*{3}}
\put(50,100){\circle*{3}}
\put(20,70){\circle*{3}}
\put(60,60){\circle*{3}}
\put(140,140){\circle*{3}}

\thicklines

\put(0,0){\line(1,0){120}}
\put(0,0){\line(0,1){120}}
\put(120,120){\line(-1,0){120}}
\put(120,120){\line(0,-1){120}}

\put(40,40){\line(1,0){40}}
\put(40,40){\line(0,1){40}}
\put(80,80){\line(-1,0){40}}
\put(80,80){\line(0,-1){40}}

\put(0,0){\line(1,1){40}}
\put(120,0){\line(-1,1){40}}
\put(120,120){\line(-1,-1){40}}
\put(0,120){\line(1,-1){40}}

%% Vertices:
\put(-5,-6){\small 4-123}
\put(115,-6){\small 234-1}
\put(115,123){\small 3-124}
\put(-11,123){\small 134-2}

\put(26,40){\small 124-3}
\put(82,40){\small 2-134}
\put(82,78){\small 123-4}
\put(26,78){\small 1-234}

%% Edges:
\put(55,-5){\small 4-23-1}
\put(55,35){\small 2-14-3}
\put(55,82){\small 1-23-4}
\put(55,122){\small 3-14-2}

\put(13,17){\rotatebox{45}{\small 4-12-3}}
\put(96,26){\rotatebox{-45}{\small 2-34-1}}
\put(92,96){\rotatebox{45}{\small 3-12-4}}
\put(16,106){\rotatebox{-45}{\small 1-34-2}}

\put(-4,53){\rotatebox{90}{\small 4-13-2}}
\put(36,53){\rotatebox{90}{\small 1-24-3}}
\put(82,66){\rotatebox{-90}{\small 2-13-4}}
\put(122,66){\rotatebox{-90}{\small 3-24-1}}

\put(40,40){\line(1,1){40}}
\put(40,40){\line(3,-2){30}}
\put(40,40){\line(-2,3){20}}
\put(80,80){\line(-3,2){30}}
\put(80,80){\line(2,-3){20}}
\put(120,0){\line(-5,2){50}}
\put(120,0){\line(-2,5){20}}
\put(0,120){\line(5,-2){50}}
\put(0,120){\line(2,-5){20}}

\qbezier(120,0)(140,20)(140,140)
\qbezier(0,120)(20,140)(140,140)

\put(56,16){\small 24-13}
\put(6,67){\small 14-23}
\put(52,101){\small 13-24}
\put(102,51){\small 23-14}

\put(62,56){\small 12-34}
\put(142,141){\small 34-12}

\put(84,9){\rotatebox{-21}{\small 24-3-1}}
\put(45,31){\rotatebox{-36}{\small 24-1-3}}
\put(108,34){\rotatebox{-67}{\small 23-4-1}}
\put(89,69){\rotatebox{-53}{\small 23-1-4}}

\put(21,60){\rotatebox{-53}{\small 14-2-3}}
\put(4,97){\rotatebox{-67}{\small 14-3-2}}
\put(60,95){\rotatebox{-36}{\small 13-2-4}}
\put(23,112){\rotatebox{-21}{\small 13-4-2}}

\put(43,47){\rotatebox{45}{\small 12-4-3}}
\put(61,65){\rotatebox{45}{\small 12-3-4}}
\put(67,139){\rotatebox{7}{\small 34-1-2}}
\put(139,66){\rotatebox{-95}{\small 34-2-1}}

\end{picture}
\end{center}
\caption{The simplicial complex
$\Lambda_{\{2,1,\underline{1}\}}$,
corresponding to the knapsack partition $\{2,1,\underline{1}\}$.
Notice that this complex is the union
of two complexes $\Delta_{(1,2,1)}$ and $\Delta_{(2,1,1)}$,
appearing in Figures~\ref{figure_121} and~\ref{figure_211}.}
\label{figure_Lambda}
\end{figure}
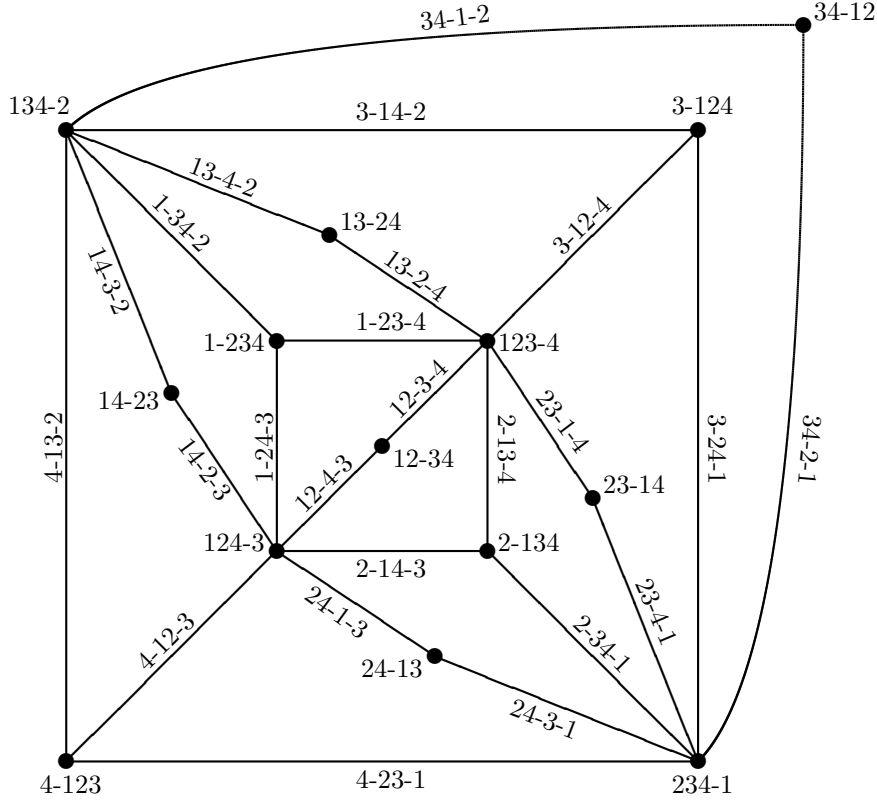

For a pointed knapsack partition
$\{ \lambda_{1}, \lambda_{2}, \ldots, \lambda_{k}, \underline{m} \}$ of $n$,
define a function $\kappa$ on the domain
$$   D
   =
     \left\{ \sum_{i \in S} \lambda_{i}
                \:\: : \:\:
             \emptyset \neq S \subseteq [k] \right\}  , $$
that is, all non-zero sums of parts of the partition,
excluding the pointed part.
Now $\kappa : D \longrightarrow \mathbb{P}$ is given by
$$    \kappa(\lambda_{i_{1}} + \lambda_{i_{2}} + \cdots + \lambda_{i_p})
    =
      \min(\lambda_{i_{1}}, \lambda_{i_{2}}, \ldots, \lambda_{i_p}) . $$
Observe that $\kappa$ is well-defined
since $\lambda$ is a knapsack partition.

For an ordered set partition $\tau = (C_{1}, C_{2}, \ldots, C_{r})$
in $\Lambda_{\{\lambda,\underline{m}\}}$, consider the following
$2r-1$ conditions:
\begin{itemize}
\item[--]
Let ${\bf A}_{i}$,
for $1 \leq i \leq r-2$,
be the condition that
$\max(C_{i}) < \min(C_{i+1})$ and $|C_{i}| \leq \kappa(|C_{i+1}|)$.

\item[--]
Let ${\bf A}_{r-1}$ be the condition that
$\max(C_{r-1}) < \min(C_{r})$.

\item[--]
Let ${\bf B}_{i}$,
for $1 \leq i \leq r-1$,
be the condition that
$\kappa(|C_{i}|) < |C_{i}|$.

\item[--]
Let ${\bf B}_{r}$ be the condition that
$|C_{r}| > m$.
\end{itemize}

Note that condition ${\bf A}_{i}$ concerns
comparing the $i$th and $(i+1)$st blocks of the set partition $\tau$.
We also notice that if
${\bf B}_{i}$ is true and $i \leq r-1$ then
the cardinality $|C_{i}|$ is
the sum of at least two parts of the partition~$\lambda$.
Similarly, if ${\bf B}_{r}$ is true then
the cardinality $|C_{r}|$ is the sum of at least
two parts of $\lambda$ and the integer $m$.

We use these conditions to construct a discrete Morse matching
for $\Lambda_{\{\lambda,\underline{m}\}}$.
We match the ordered set partition
$\tau = (C_{1}, C_{2}, \ldots, C_{r})$
in $\Lambda_{\{\lambda,\underline{m}\}}$ as follows:
\begin{itemize}
\item[--]
If conditions ${\bf A}_{j}$ and ${\bf B}_{j}$ are false
for $1 \leq j \leq i-1$ and $i \leq r-1$
but condition ${\bf B}_{i}$ is true,
then let $X$ be the $\kappa(|C_{i}|)$ smallest elements of $C_{i}$
and $Y$ be the $|C_{i}|-\kappa(|C_{i}|)$ largest elements of $C_{i}$.
Let $u(\tau)$ be given by
$$ u(\tau)
   =
   (C_{1}, C_{2}, \ldots, C_{i-1}, X, Y, C_{i+1}, \ldots, C_{r}) $$
and let the type of the edge $(\tau, u(\tau))$ be $i$.

\item[--]
If conditions ${\bf A}_{j}$ and ${\bf B}_{j}$ are false
for $1 \leq j \leq r-1$
but condition ${\bf B}_{r}$ is true,
then let $X$ be the $\kappa(|C_{r}| - m)$ smallest elements of $C_{r}$
and $Y$ be the $|C_{r}|-\kappa(|C_{r}| - m)$ largest elements of $C_{r}$.
Let $u(\tau)$ be given by
$$ u(\tau)
   =
   (C_{1}, C_{2}, \ldots, C_{r-1}, X, Y) $$
and let the type of the edge $(\tau, u(\tau))$ be $r$.

\item[--]
If conditions ${\bf A}_{j}$ and ${\bf B}_{j}$ are false
for $1 \leq j \leq i-1$
and condition ${\bf B}_{i}$ is false but condition
${\bf A}_{i}$ is true,
then let
$$ d(\tau)
   =
   (C_{1}, C_{2}, \ldots, C_{i-1}, C_{i} \cup C_{i+1},
     C_{i+2}, \ldots, C_{r}) $$
and let the type of the edge $(d(\tau), \tau)$ be $i$.
\end{itemize}

\begin{lemma}
Let $\tau$ and $\tau^{\prime}$ be two different ordered set partitions
satisfying the condition
$\tau \prec u(\tau) \succ \tau^{\prime} \prec u(\tau^{\prime})$.
Then this condition implies that
the type of $(\tau, u(\tau))$ is greater
than the type of $(\tau^{\prime}, u(\tau^{\prime}))$.
Hence the matching is acyclic.
\label{lemma_Morse}
\end{lemma}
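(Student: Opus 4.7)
The plan is to prove the strict decrease of types by case analysis on the position $j$ in $u(\tau)$ at which the merge producing $\tau'$ occurs. Let $i$ be the type of $(\tau, u(\tau))$, so by the matching rule $\mathbf{A}_l(\tau)$ and $\mathbf{B}_l(\tau)$ are false for every $l < i$ and $\mathbf{B}_i(\tau)$ holds. Write $u(\tau) = (C_1, \dots, C_{i-1}, X, Y, C_{i+1}, \dots, C_r)$, where $X$ consists of the $\kappa(|C_i|)$ smallest elements of $C_i$ and $Y$ the remaining ones. Since $\tau' \neq \tau$ is a facet of $u(\tau)$, the merge position $j$ satisfies $j \neq i$, so either $j < i$ or $j \geq i+1$.

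In the first case $j < i$ (including the sub-case $j = i-1$, where the merge involves $X$), the blocks of $\tau'$ at positions strictly below $j$ coincide with those of $\tau$, so $\mathbf{A}_l(\tau')$ and $\mathbf{B}_l(\tau')$ are false for $l < j$ by the minimality of $i$. I would then verify that $\mathbf{A}_{j-1}(\tau')$ still fails, splitting into the two possible reasons why $\mathbf{A}_{j-1}(\tau)$ failed: either the ordering $\max(C_{j-1}) < \min(C_j)$ was already violated (and $\min$ of a union is at most $\min$ of each summand), or the size inequality $|C_{j-1}| \leq \kappa(|C_j|)$ failed (and the knapsack property bounds $\kappa$ of the merged block above by the smaller summand). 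Finally, at position $j$ itself the merged block has size equal to a sum of at least two distinct parts of $\lambda$, so $\kappa$ of this size is strictly less than the size, making $\mathbf{B}_j(\tau')$ true. Hence $\tau'$ is matched upward with type $j < i$.

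The case $j \geq i+1$ is more delicate. The first $i-1$ blocks of $\tau'$ are unchanged from $\tau$, and the block at position $i$ is $X$, whose size $\kappa(|C_i|)$ is a single part of $\lambda$, so that $\mathbf{B}_i(\tau')$ is automatically false. The crux is to argue that $\mathbf{A}_i(\tau')$ holds, after which the matching rule would send $\tau'$ downward at position $i$, contradicting the hypothesis $\tau' \prec u(\tau')$ and so ruling out this case. The order condition $\max(X) < \min(Y)$ and the size bound $|X| = \kappa(|C_i|) \leq \kappa(|Y|)$ both hold in $u(\tau)$ by construction, and I would use the knapsack hypothesis to show they are preserved when $Y$ is potentially merged with a block to its right in forming $\tau'$, so that the order and size inequalities for $\mathbf{A}_i$ transfer to the block of $\tau'$ at position $i+1$.

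The main technical obstacle lies in this second case, namely propagating the inequality $|X| \leq \kappa(\cdot)$ from $Y$ to the possibly merged block at position $i+1$ of $\tau'$; this is where the uniqueness of integer decompositions guaranteed by the knapsack assumption is used essentially, and where the detailed structure of how $Y$ is composed of the parts of the decomposition of $|C_i|$ enters. Once both cases are settled, the type strictly decreases along every up-down-up triple, and since types are bounded below by $1$, no cycle of matched edges can close up, establishing acyclicity of the matching.
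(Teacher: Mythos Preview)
Your case analysis mirrors the paper's proof exactly: the paper also splits on whether the merge position $j$ is above or below $i$, dispatches $j<i$ by showing $\tau'$ is matched up at position $j$, and dispatches $j>i$ by claiming $\tau'$ is matched down at position $i$.

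There is, however, a genuine gap in your case $j=i+1$, where $\tau'$ is obtained by merging $Y$ with the next block $C_{i+1}$. You assert that $\mathbf{A}_i(\tau')$ holds, i.e.\ that $\max(X)<\min(Y\cup C_{i+1})$ and $|X|\le\kappa(|Y|+|C_{i+1}|)$. Neither inequality need hold: nothing prevents $\min(C_{i+1})<\max(X)$, and nothing prevents the decomposition of $|C_{i+1}|$ from containing a part smaller than $|X|=\kappa(|C_i|)$. The knapsack hypothesis gives uniqueness of decompositions but does not bound $\min(C_{i+1})$ or $\kappa(|C_{i+1}|)$ from below by anything related to $X$.

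Concretely, take $\{\lambda,\underline{m}\}=\{1,2,4,\underline{1}\}$, $\tau=(\{3,4,5\},\{1,2,6,7\},\{8\})$. Then $\mathbf{B}_1(\tau)$ holds (since $\kappa(3)=1<3$), so $u(\tau)=(\{3\},\{4,5\},\{1,2,6,7\},\{8\})$ with type $i=1$. Merging $Y=\{4,5\}$ with $C_2=\{1,2,6,7\}$ gives $\tau'=(\{3\},\{1,2,4,5,6,7\},\{8\})$. Here $\mathbf{A}_1(\tau')$ is \emph{false} because $\max\{3\}=3\not<1=\min\{1,2,4,5,6,7\}$; instead $\mathbf{B}_2(\tau')$ holds and $\tau'$ is matched \emph{up} with $u(\tau')=(\{3\},\{1,2\},\{4,5,6,7\},\{8\})$ of type $2$. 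So the type goes from $1$ to $2$, and the strict-decrease claim of the lemma is violated.

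The paper's own proof writes down an explicit $d(\tau')$ in the case $i<j$ whose displayed formula only makes sense for $j\ge i+2$, so it is equally silent on this sub-case. The acyclicity conclusion may well survive (in the example above the directed path terminates after two more steps without closing up), but the monotone-type argument as stated does not establish it; a different decreasing quantity, or a direct analysis of what can follow a type-increasing step, is needed.
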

\begin{proof}
Consider the following three ordered set partitions:
\begin{eqnarray*}
\tau
  & = &
(C_{1}, C_{2}, \ldots, C_{i-1}, C_{i} \cup C_{i+1}, C_{i+2}, \ldots, C_{r}) , \\
u(\tau)
  & = &
(C_{1}, C_{2}, \ldots, C_{r}) , \\
\tau^{\prime}
  & = &
(C_{1}, C_{2}, \ldots, C_{j-1}, C_{j} \cup C_{j+1}, C_{j+2}, \ldots, C_{r}) ,
\end{eqnarray*}
for $i \neq j$.
Note that the type of the edge $(\tau, u(\tau))$ is $i$.
If $i<j$, the ordered set partition $\tau^{\prime}$ should be matched to
$$  d(\tau^{\prime})
    =
   (C_{1}, C_{2}, \ldots, C_{i-1}, C_{i} \cup C_{i+1}, C_{i+2},
    \ldots, C_{j-1}, C_{j} \cup C_{j+1}, C_{j+2}, \ldots, C_{r}) , $$
contradicting the assumption that
$\tau^{\prime}$ was matched upwards with
$u(\tau^{\prime})$.
Hence, we conclude that $i>j$ and,
$$ u(\tau^{\prime})
     =
   (C_{1}, C_{2}, \ldots, C_{j-1}, X, Y, C_{j+2}, \ldots, C_{r}) $$
for $X \cup Y = C_{j} \cup C_{j+1}$
such that $\max(X)<\min(Y)$.
Since $\max(C_{j})>\min(C_{j+1})$,
we have $u(\tau) \neq u(\tau^{\prime})$
and the type of $(\tau^{\prime}, u(\tau^{\prime}))$ is $j$.
Therefore, we have
$\type(\tau, u(\tau)) = i > j = \type(\tau^{\prime}, u(\tau^{\prime}))$.

Assume now that this matching is not acyclic.
Then we would reach a contradiction
by following the inequalities of types around a cycle.
\end{proof}

\begin{lemma}
The critical cells of the above matching
of the face poset of $\Lambda_{\{\lambda,\underline{m}\}}$
are of the form
$\sigma(\alpha, \epsilon(\dv))$
where
$\dv$ ranges in the set $V(\lambda, \underline{m})$
and
$\alpha$ ranges over all permutations in the symmetric group
$\SSSS_{n}$ with descent composition $\dv$.
\label{lemma_cells}
\end{lemma}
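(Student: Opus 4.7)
The plan is to establish a bijection between critical cells and pairs $(\dv,\alpha)$ with $\dv \in V(\lambda,\underline{m})$ and $\Des(\alpha) = \dv$, realised by the map $(\dv,\alpha) \longmapsto \sigma(\alpha,\epsilon(\dv))$. Unpacking the matching rule, a face $\tau = (C_1, C_2, \ldots, C_r)$ is critical precisely when none of the three matching rules can fire, i.e., when every condition ${\bf A}_i$ and every condition ${\bf B}_i$ in the relevant range is false.

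For the forward direction, I would first read block sizes off the falsity of the ${\bf B}$ conditions. For $1 \leq i \leq r-1$, falsity of ${\bf B}_i$ yields $\kappa(|C_i|) = |C_i|$, which, since $\lambda$ is a knapsack partition, forces $|C_i|$ to be a single part of $\lambda$. Falsity of ${\bf B}_r$ gives $|C_r| \leq m$; combined with the obvious inequality $|C_r| \geq m$ (which holds because $\tau \in \Lambda_{\{\lambda,\underline{m}\}}$), this gives $|C_r| = m$. Next, declare an index $i \in \{1,\ldots,r-1\}$ to be a descent of $\tau$ when $\max(C_i) > \min(C_{i+1})$; position $r-1$ is automatically a descent, because ${\bf A}_{r-1}$ yields $\max(C_{r-1}) \geq \min(C_r)$ and the blocks are disjoint. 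For $i \leq r-2$, falsity of ${\bf A}_i$ reads ``$i$ is a descent of $\tau$, or $|C_i| > \kappa(|C_{i+1}|) = |C_{i+1}|$''. Group $C_1,\ldots,C_{r-1}$ into maximal runs terminated by descents; within each run the block sizes form a strictly decreasing list of parts of $\lambda$ summing to some integer $d_j$, and together with the trailing $m$ they form a composition $\dv = (d_1,\ldots,d_s,m) \in V(\lambda,\underline{m})$ with $\type(\tau) = \epsilon(\dv)$.

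Now let $\alpha$ be the permutation obtained by listing the elements of $C_1, C_2, \ldots, C_r$ in this order, with each block internally arranged in increasing order. Within a $\dv$-run the sorting condition $\max(C_i) < \min(C_{i+1})$ forces $\alpha$ to be monotonically increasing, while at each declared descent position the strict inequality produces a descent of $\alpha$; hence $\Des(\alpha) = \dv$ and $\tau = \sigma(\alpha,\epsilon(\dv))$. For the converse, given $\dv \in V(\lambda,\underline{m})$ and $\alpha$ with $\Des(\alpha) = \dv$, I would simply reverse this computation on $\sigma(\alpha,\epsilon(\dv))$: inside each $\dv$-run the entries of $\alpha$ increase, so the $\epsilon$-sub-blocks are sorted and have strictly decreasing sizes, making ${\bf A}_i$ false via its size clause; at each run boundary $\alpha$ has a descent, making ${\bf A}_i$ false via its order clause; each non-pointed block has size equal to a single part of $\lambda$ and $|C_r| = m$, so every ${\bf B}_i$ is false. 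The main obstacle is not conceptual but a careful book-keeping through the four cases (two for ${\bf A}_i$ and two for ${\bf B}_i$, with special handling of $i = r-1$ and $i = r$), together with a verification at the extraction step that the decomposition of each $d_j$ produced by the grouping agrees with the unique knapsack decomposition used to define $\epsilon$; this last point holds since the multiset $\{|C_1|,\ldots,|C_{r-1}|\}$ exhausts $\lambda$.
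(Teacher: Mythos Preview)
Your proposal is correct and follows essentially the same route as the paper's proof: characterize critical cells as those where every ${\bf A}_i$ and ${\bf B}_i$ is false, deduce from the ${\bf B}$-conditions that each non-pointed block size is a single part of $\lambda$ and that $|C_r|=m$, then use the failed ${\bf A}$-conditions to read off the permutation $\alpha$ and its descent composition $\dv$. Your explicit grouping into maximal runs is just a more structured phrasing of what the paper does when it sets $\dv=\Des(\alpha)$ and observes that $(|C_1|,\ldots,|C_r|)=\epsilon(\dv)$; the two orderings of the argument are equivalent.

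If anything, your write-up is more complete than the paper's: you supply the converse direction (that every $\sigma(\alpha,\epsilon(\dv))$ with $\Des(\alpha)=\dv\in V(\lambda,\underline{m})$ is indeed unmatched), which the paper leaves implicit, and you justify $|C_r|=m$ by pairing the negation of ${\bf B}_r$ with the filter inequality $|C_r|\geq m$, where the paper simply asserts equality. The only cosmetic point is that your phrase ``${\bf A}_{r-1}$ yields'' should read ``falsity of ${\bf A}_{r-1}$ yields,'' but the intent is clear.
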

\begin{proof}
The unmatched cells of the matching presented
on the face poset of $\Lambda_{\{\lambda,\underline{m}\}}$ have
the form $(C_{1}, C_{2}, \ldots, C_{r})$ such that
all the conditions
${\bf A}_{i}$ and ${\bf B}_{i}$ are false for $1 \leq i \leq r-1$
and ${\bf B}_{r}$ is false.

Since the condition ${\bf B}_{i}$ is false for $i \leq r-1$
we have that $|C_{i}| = \kappa(|C_{i}|)$, that is,
$|C_{i}|$ is a part of the partition $\lambda$ for $i \leq r-1$.
Also ${\bf B}_{r}$ is false implies that $|C_{r}| = m$.
Hence
$\{|C_{1}|, \ldots, |C_{r-1}|, \underline{|C_{r}|}\}$
is the pointed partition $\{\lambda,\underline{m}\}$.
Since the condition ${\bf A}_{i}$ is false,
we have $\max(C_{i}) > \min(C_{i+1})$
or $|C_{i}| > \kappa(|C_{i+1}|) = |C_{i+1}|$
for $1 \leq i \leq r-2$.
Finally, ${\bf A}_{r-1}$ is false, implying
$\max(C_{r-1}) > \min(C_{r})$.

For the unmatched cell $(C_{1}, \ldots, C_{r})$,
let $\alpha$ be the permutation obtained by
writing each block of the critical cell in increasing order.
Furthermore, let $\dv$ be the descent composition of the
permutation~$\alpha$.
Observe that the composition $\dv$ belongs to the set
$V(\lambda, \underline{m})$
since an entry in $\dv$ is a sum of distinct parts of $\lambda$.
Furthermore the composition
$(|C_{1}|, \ldots, |C_{r-1}|, |C_{r}|)$
is the composition $\epsilon(\dv)$.
Hence the unmatched cell is
$\sigma(\alpha,\epsilon(\dv))$.
\end{proof}

\begin{example}
{\rm
Consider the pointed knapsack partition
$\{\lambda,\underline{m}\} = \{2,1,\underline{1}\}$
whose associated complex~$\Lambda_{\{2,1,\underline{1}\}}$
is shown in Figure~\ref{figure_Lambda}.
Note that
$V(\lambda,\underline{m})
   =
\{(1,2,1), (2,1,1), (3,1)\}$.
The critical cells of the complex
$\Lambda_{\{2,1,\underline{1}\}}$ are as follows:
$$ \begin{array}{c | c | c | c | l}
\dv & \beta(\dv) & \epsilon(\dv) & W(\dv) &
\text{critical cells} \\ \hline
(1,2,1) & 5 & (1,2,1) & \{(1,2,1)\} &
\text{2-14-3, 3-14-2, 3-24-1, 4-13-2, 4-23-1} \\
(2,1,1) & 3 & (2,1,1) & \{(2,1,1)\} &
\text{14-3-2, 24-3-1, 34-2-1} \\
(3,1)   & 3 & (2,1,1) & \{(1,2,1), (2,1,1)\} &
\text{12-4-3, 13-4-2, 23-4-1}
\end{array} $$
Note that
$\Lambda_{\{2,1,\underline{1}\}}$
is homotopy equivalent to a wedge of $11$ circles.
The notion $W(\dv)$ will be
defined in the beginning of the next section.
}
\label{example_2_1_1}
\end{example}

\begin{proof}[Proof of Theorem~\ref{theorem_knapsack}]
By Lemma~\ref{lemma_Morse}
the matching presented is a Morse matching
and Lemma~\ref{lemma_cells}
describes the critical cells, proving the theorem.
\end{proof}

Now by the same reasoning as in Section~\ref{section_Quillen},
that is, using the forgetful map $\phi$ and Quillen's Fiber Lemma,
we obtain a homotopy equivalence between
the order complex of pointed partitions
$\Pi^{\bullet}_{\{\lambda,\underline{m}\}} - \{\ho\}$
and the
simplicial complex of ordered set partitions
$\Lambda_{\{\lambda,\underline{m}\}}$.
Since the proof follows the same outline, it is omitted.
\begin{theorem}
The order complex
$\Delta\left(\Pi^{\bullet}_{\{\lambda,\underline{m}\}} - \{\ho\}\right)$
is homotopy equivalent to
the barycentric subdivision~$\sd(\Lambda_{\{\lambda,\underline{m}\}})$
and hence the simplicial complex
$\Lambda_{\{\lambda,\underline{m}\}}$.
\label{theorem_pointed_partitions_2}
\end{theorem}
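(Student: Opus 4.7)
The plan is to transplant the argument of Section~\ref{section_Quillen} verbatim, with the single adjustment that the filter property of $\Pi^{\bullet}_{\{\lambda,\underline{m}\}}$ replaces the downward-closure properties used there. First I would introduce the $\SSSS_{n}$-simplicial forgetful map $\phi\colon \Lambda_{\{\lambda,\underline{m}\}} \longrightarrow \Pi^{\bullet}_{\{\lambda,\underline{m}\}}$ given by
$$\phi\bigl((C_{1},C_{2},\ldots,C_{r})\bigr) \;=\; \{C_{1},\ldots,C_{r-1},\underline{C_{r}}\},$$
which is well defined because membership in $\Lambda_{\{\lambda,\underline{m}\}}$ is defined precisely through this pointed partition.

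The heart of the argument is the analogue of Lemma~\ref{lemma_Omega}: for each $\pi = \{B_{1},\ldots,B_{s-1},\underline{B_{s}}\}$ in $\Pi^{\bullet}_{\{\lambda,\underline{m}\}}-\{\ho\}$ with $s \geq 2$, the subcomplex
$$\Omega \;=\; \phi^{-1}\!\left(\left(\Pi^{\bullet}_{\{\lambda,\underline{m}\}}-\{\ho\}\right)_{\geq \pi}\right)$$
is a cone with apex the vertex $([n]-B_{s},\,B_{s})$, hence contractible. I would verify this in two steps. First, the candidate apex itself lies in $\Omega$: the pointed partition $\{[n]-B_{s},\underline{B_{s}}\}$ is obtained from $\pi$ by merging all non-pointed blocks, so it sits above $\pi$ in $\Pi^{\bullet}_{n}$; since $\Pi^{\bullet}_{\{\lambda,\underline{m}\}}$ is a \emph{filter} it remains there, and it is not $\ho$ because $s \geq 2$. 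Second, for any face $\tau = (C_{1},\ldots,C_{r}) \in \Omega$ one has $C_{r} \supseteq B_{s}$; if $C_{r} = B_{s}$ the apex is already a vertex of $\tau$, and otherwise the refinement $\tau^{\ast} = (C_{1},\ldots,C_{r-1},\,C_{r}-B_{s},\,B_{s})$ is a common super-face of $\tau$ and the apex in $\Omega$.

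With contractibility in hand, Quillen's Fiber Lemma applied exactly as in the proof of Theorem~\ref{theorem_pointed_partitions} (using $\phi^{-1}\!\left(P_{\geq \pi}\right) = \sd(\Omega) \simeq \Omega$) yields
$$\Delta\bigl(\Pi^{\bullet}_{\{\lambda,\underline{m}\}}-\{\ho\}\bigr) \;\simeq\; \sd\bigl(\Lambda_{\{\lambda,\underline{m}\}}\bigr),$$
and a simplicial complex is homeomorphic to its barycentric subdivision, giving the second homotopy equivalence asserted in the theorem.

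The only non-cosmetic point---and hence the main obstacle---is the check that $\tau^{\ast}$ actually lies in $\Lambda_{\{\lambda,\underline{m}\}}$, that is, that $\phi(\tau^{\ast}) = \{C_{1},\ldots,C_{r-1},\,C_{r}-B_{s},\,\underline{B_{s}}\}$ belongs to $\Pi^{\bullet}_{\{\lambda,\underline{m}\}}$. This is where the knapsack hypothesis, encoded in the filter structure, is essential: since $\phi(\tau^{\ast}) \geq \pi$ in $\Pi^{\bullet}_{n}$ and $\pi \in \Pi^{\bullet}_{\{\lambda,\underline{m}\}}$, filter-closure delivers $\phi(\tau^{\ast}) \in \Pi^{\bullet}_{\{\lambda,\underline{m}\}}$ at no further cost. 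Every other step is a direct transcription of the argument in Section~\ref{section_Quillen}, which is why the authors regard the proof as omittable.
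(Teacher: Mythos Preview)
Your proposal is correct and follows exactly the approach the paper intends: the authors explicitly say the proof ``follows the same outline'' as Section~\ref{section_Quillen} via the forgetful map $\phi$ and Quillen's Fiber Lemma, and you have faithfully filled in those details, correctly identifying that the filter property of $\Pi^{\bullet}_{\{\lambda,\underline{m}\}}$ is what guarantees both the apex $([n]-B_{s},B_{s})$ and the refinement $\tau^{\ast}$ remain in the complex.
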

As a corollary we obtain the
M\"obius function of the poset
$\Pi^{\bullet}_{\{\lambda,\underline{m}\}} \cup \{\hz\}$;
see~\cite{Ehrenborg_Readdy_I}.
\begin{corollary}[Ehrenborg--Readdy]
The M\"obius function of the poset
$\Pi^{\bullet}_{\{\lambda,\underline{m}\}} \cup \{\hz\}$
is given by
$$ \mu\left(\Pi^{\bullet}_{\{\lambda,\underline{m}\}} \cup \{\hz\}\right)
      =
   (-1)^{k}
      \cdot
   \sum_{\dv \in V(\lambda, \underline{m})} \beta(\dv) . $$
\label{corollary_Ehrenborg_Readdy}
\end{corollary}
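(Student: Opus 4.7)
The plan is to derive the formula as a direct bookkeeping consequence of Theorems~\ref{theorem_knapsack} and~\ref{theorem_pointed_partitions_2}, together with Hall's theorem. First I would observe that the filter $\Pi^{\bullet}_{\{\lambda,\underline{m}\}}$ already contains a top element $\ho = \underline{[n]}$, so adjoining $\hz$ gives a bounded poset. By Hall's theorem, as recalled in Section~\ref{section_preliminaries},
$$ \mu\left(\Pi^{\bullet}_{\{\lambda,\underline{m}\}} \cup \{\hz\}\right)
    =
   \widetilde{\chi}\left(\Delta\left(\Pi^{\bullet}_{\{\lambda,\underline{m}\}} - \{\ho\}\right)\right) . $$

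Second, I would invoke Theorem~\ref{theorem_pointed_partitions_2}, which asserts that this order complex is homotopy equivalent to the simplicial complex of ordered set partitions $\Lambda_{\{\lambda,\underline{m}\}}$. Since homotopy equivalence preserves the reduced Euler characteristic, the right-hand side equals $\widetilde{\chi}(\Lambda_{\{\lambda,\underline{m}\}})$. Then Theorem~\ref{theorem_knapsack} identifies the homotopy type of $\Lambda_{\{\lambda,\underline{m}\}}$ as a wedge of $b = \sum_{\dv \in V(\lambda, \underline{m})} \beta(\dv)$ equidimensional spheres sitting in the top dimension of the complex. The reduced Euler characteristic of such a wedge is $\pm b$, with the sign determined by the parity of the sphere dimension.

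There is essentially no obstacle beyond chaining these three facts together; all the substantive topology has already been carried out in the main theorems of this section, with the heavy lifting done by the discrete Morse matching of Theorem~\ref{theorem_knapsack} and the equivariant Quillen-style argument behind Theorem~\ref{theorem_pointed_partitions_2}. The only point requiring care is to read off the correct parity from the sphere dimension recorded in Theorem~\ref{theorem_knapsack} so as to produce the factor $(-1)^{k}$ stated in the corollary.
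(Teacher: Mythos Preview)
Your proposal is correct and is exactly the argument the paper intends: the corollary is stated without proof as an immediate consequence of Theorems~\ref{theorem_knapsack} and~\ref{theorem_pointed_partitions_2} via Hall's theorem, precisely the chain you outline. Your caution about the sign is well placed, since the spheres in Theorem~\ref{theorem_knapsack} have dimension $k-1$ and one simply reads off $\widetilde{\chi} = (-1)^{k-1}\sum_{\dv}\beta(\dv)$.
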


\section{Cycles in the complex $\Lambda_{\{\lambda,\underline{m}\}}$}
\label{section_knapsack_cycles}

Observe that when the pointed part $m$ is equal
to zero, the complex $\Lambda_{\{\lambda,\underline{m}\}}$
is contractible. Hence we
tacitly assume that $m$ is positive
in this and the next section.

For a pointed knapsack partition
$\{ \lambda, \underline{m} \}$ of $n$
and $\dv \in V(\lambda, \underline{m})$,
let $W(\dv)$ be the set
$$ W(\dv)
      =
  \{\cv \in V(\lambda, \underline{m}):
\cv \leq \dv, \type(\cv)=\{\lambda,\underline{m}\}\} . $$
Especially we have $\epsilon(\dv) \in W(\dv)$.
For the case when
the pointed knapsack partition is
$\{2,1,\underline{1}\}$, see Example~\ref{example_2_1_1}.

For $\alpha$ a permutation in the symmetric group~$\SSSS_{n}$
and $\dv$ a composition of $V(\lambda,\underline{m})$,
define the subcomplex~$\Sigma_{\alpha, \dv}$
of the complex $\Lambda_{\{\lambda,\underline{m}\}}$
to be the simplicial complex whose facets are given by
         $$\{\sigma(\alpha \circ \gamma, \cv):
            \cv \in W(\dv), \:
            \gamma \in \SSSS^{c}_{\dv} \}.$$
This means the types of facets in $\Sigma_{\alpha, \dv}$
belong to the set $W(\dv)$.

For $\dv=(d_{1},\ldots,d_{r})$,
if $d_{i}$ splits into $t_{i}$ parts in a composition in $W(\dv)$,
then the group~$\SSSS_{t_{1}} \timesdots \SSSS_{t_{r}}$ acts on $W(\dv)$
by permuting the $t_i$ parts $d_{i}$ splits into.
Given $\cv \in W(\dv)$
there exists a permutation $\rho \in \SSSS_{t_{1}} \timesdots \SSSS_{t_{r}}$
so $\rho(\epsilon(\dv))=\cv$.
Define the sign of $\cv$,
that is, $(-1)^{\cv}$ to be the sign $(-1)^{\rho}$.
Especially, we have $(-1)^{\epsilon(\dv)}=1$.

Similar to Lemma~\ref{lemma_sphere_I} we have the next result.
\begin{lemma}
The subcomplex $\Sigma_{\alpha, \dv}$ is
isomorphic to the join
of the duals of the permutahedra
$$ P_{|K_{1}|} *  \cdots * P_{|K_{n-r+1}|}
    *
   P_{t_{1}} *  \cdots * P_{t_{r}}    $$
and hence it is a sphere of dimension $k-1$,
where $k$ is the number of parts in the partition $\lambda$.
\label{lemma_sphere_II}
\end{lemma}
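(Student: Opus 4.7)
The plan is to mimic the proof of Lemma~\ref{lemma_sphere_I}, extending it to accommodate the additional freedom of choosing $\cv \in W(\dv)$ within each row of the border strip associated to $\dv$. First I would represent each factor concretely: $P_{|K_j|}$ as ordered partitions of the column $K_j$, and $P_{t_i}$ as ordered partitions of $[t_i]$, where $[t_i]$ indexes the $t_i$ parts of $\lambda$ summing to $d_i$. The knapsack hypothesis, together with the requirement in the definition of $V(\lambda,\underline{m})$ that each $d_i$ be a sum of distinct parts of $\lambda$, ensures these $t_i$ parts have pairwise distinct values; hence $|W(\dv)|=\prod_i t_i!$ and no two distinct elements of $W(\dv)$ give the same sequence of sizes.

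Next I would construct an explicit bijection between faces of the join and faces of $\Sigma_{\alpha,\dv}$. Given a tuple $(\tau_1,\ldots,\tau_{n-r+1},\pi_1,\ldots,\pi_r)$, first glue the column partitions $\tau_j$ exactly as in Lemma~\ref{lemma_sphere_I}, unioning the last block of $\tau_j$ with the first block of $\tau_{j+1}$. This yields an ordered set partition $(E_1,\ldots,E_r)$ in which $E_i$ records the row-$R_i$ values of $\alpha\circ\gamma$, for the $\gamma \in \SSSS^c_{\dv}$ encoded by the $\tau_j$'s. Then, for each row $R_i$, refine $E_i$ according to $\pi_i$: each block of $\pi_i$ becomes a sub-block whose size is the sum of the indicated parts of $\lambda$, with the values inserted in position-order within $R_i$. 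The inverse map reads off the column distribution and the row splittings from any face of $\Sigma_{\alpha,\dv}$.

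Once the bijection is verified to preserve the face-subface relation, we obtain the simplicial isomorphism with the claimed join. The sphere statement then follows from the standard fact that the join of an $a$-sphere and a $b$-sphere is an $(a+b+1)$-sphere. The total dimension of the join is $\sum_j (|K_j|-2) + \sum_i (t_i-2) + (n+1) - 1$, which simplifies to $k-1$ using $\sum_j |K_j|=n$ and $\sum_i t_i = k+1$ (noting $t_r = 1$ since the pointed part does not split, while $\sum_{i<r} t_i = k$).

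The main obstacle will be verifying the bijection cleanly at the column-row joints of the border strip, where the gluing of the column partitions interacts with the row-refinements coming from the $\pi_i$'s. In particular, one must check both that every combination of column reorderings and row splittings actually corresponds to a genuine face of $\Sigma_{\alpha,\dv}$ of the expected type, and that distinct tuples from the join yield distinct such faces.
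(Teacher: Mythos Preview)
The paper explicitly omits the proof of this lemma, offering only the dimension count; so there is no ``paper's own proof'' to compare against.  Your strategy---extending the gluing construction of Lemma~\ref{lemma_sphere_I} by an additional layer of $P_{t_i}$ factors recording the reordering of the $t_i$ distinct $\lambda$-parts summing to $d_i$---is precisely the natural approach, and your use of the knapsack/distinctness hypothesis to ensure $|W(\dv)|=\prod_i t_i!$ is the key observation that makes the factor count match.  Your dimension calculation agrees with the one the paper does give.

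One point deserves care.  Your two-step description ``glue the $\tau_j$'s to obtain $(E_1,\ldots,E_r)$, then refine each $E_i$ by $\pi_i$'' is only literally correct when every $\tau_j$ is a facet of $P_{|K_j|}$; for a general face some $\tau_j$ will have fewer blocks, the Lemma~\ref{lemma_sphere_I} gluing then produces fewer than $r$ blocks, and there is no clean ``$E_i$'' to refine.  What actually happens is that the $\pi_i$'s introduce separators \emph{inside} each row $R_i$ (at positions determined by the partial sums of the chosen $\lambda$-parts), while the $\tau_j$'s control both the distribution of column-values to rows and whether the separator at each row boundary $d_1+\cdots+d_i$ is present; the two families of separators are disjoint because every column of size $\geq 2$ straddles a row boundary, never a sub-row boundary.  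You have in fact already flagged this interaction as the main obstacle, so the plan is sound---just be aware that the map must be defined on all faces simultaneously (or on facets together with a verification that the induced boundary relations match), not as a sequential ``glue, then refine'' recipe.
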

Since this lemma is not
necessary for the remainder of this paper,
the proof is omitted. However, let us verify the
dimension of the sphere.
First the dimension of the $(n-r)$-fold join
$P_{|K_{1}|} *  \cdots * P_{|K_{n-r+1}|}$
is $(|K_{1}| - 2) + \cdots + (|K_{n-r+1}|-2) + (n-r)
=
n - 2 \cdot (n-r+1) + n-r = r-2$.
Similarly, the dimension of
$P_{t_{1}} *  \cdots * P_{t_{r}}$
is given by
$(t_{1}-2) + \cdots + (t_{r}-2) + (r-1)
  =
(k+1) - 2 \cdot r + r-1 = k-r$.
Thus the dimension of the sphere in the lemma
is $(r-2) + (k-r) + 1 = k-1$.

Define the element $g_{\alpha,\dv}$
in the chain group $C_{k-1}(\Lambda_{\{\lambda,\underline{m}\}})$ to be
$$ g_{\alpha,\dv}
      =
   \sum_{\gamma \in \SSSS^{c}_{\dv}}
   \sum_{\cv \in W(\dv)}
        (-1)^{\gamma} \cdot (-1)^{\cv} \cdot
        \sigma(\alpha \circ \gamma, \cv).$$
When $\alpha$ has descent composition $\dv$,
note the critical cell $\sigma(\alpha, \epsilon(\dv))$
has sign $1$ in $g_{\alpha,\dv}$.

\begin{lemma}
For $\alpha \in \SSSS_{n}$, the element $g_{\alpha,\dv}$
in the chain group $C_{k-1}(\Lambda_{\{\lambda,\underline{m}\}})$
belongs to the kernel of the boundary map
and hence the homology group
$\HH_{k-1}(\Lambda_{\{\lambda,\underline{m}\}})$.
\end{lemma}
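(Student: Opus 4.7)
The plan is to expand $\partial g_{\alpha,\dv}$ as a sum over triples $(\gamma,\cv,i)$, with $\gamma \in \SSSS^{c}_{\dv}$, $\cv = (c_{1},\ldots,c_{k+1}) \in W(\dv)$, and $1 \leq i \leq k$, where each triple contributes the face obtained by merging the $i$th and $(i+1)$st blocks of $\sigma(\alpha \circ \gamma, \cv)$ with sign $(-1)^{\gamma}(-1)^{\cv}(-1)^{i-1}$. I will exhibit a sign-reversing involution on these triples that pairs each with another giving the same face but opposite sign, forcing the total to vanish.

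Writing $\dv = (d_{1}, \ldots, d_{r})$, the condition $\cv \leq \dv$ forces the parts of $\cv$ to group into $r$ consecutive \emph{chunks}, the $j$th of which has sum $d_{j}$ and consists of a permutation of the strictly decreasing list $\lambda_{j,1} > \cdots > \lambda_{j,t_{j}}$. Classify $(\gamma,\cv,i)$ as type~(a) when the $i$th and $(i+1)$st parts of $\cv$ lie in the same chunk, and as type~(b) when they straddle two adjacent chunks $R_{j}$ and $R_{j+1}$.

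For a type~(a) triple I would pair $(\gamma,\cv,i)$ with $(\gamma,\cv',i)$, where $\cv'$ is obtained from $\cv$ by swapping the parts $c_{i}$ and $c_{i+1}$. Because these two parts come from a strictly decreasing decomposition of $d_{j}$, they are distinct values, so $\cv' \neq \cv$ and the pairing is a fixed-point-free involution on the type~(a) triples. The merged face is unchanged, since the union over positions $i$ and $i+1$ covers the same block of indices of $\alpha \circ \gamma$ under $\cv$ and under $\cv'$, while all other blocks agree. By the definition of $(-1)^{\cv}$, the adjacent transposition within $\SSSS_{t_{j}}$ gives $(-1)^{\cv'} = -(-1)^{\cv}$, producing the required cancellation.

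For a type~(b) triple, let $p = d_{1} + \cdots + d_{j}$ when the merge straddles chunks $R_{j}$ and $R_{j+1}$, and let $\tau$ denote the transposition of $p$ and $p+1$ in $[n]$. The positions $p$ and $p+1$ are the last box of row~$j$ and the first box of row~$j+1$ of the border strip for $\dv$, and these lie in the same column of that strip, so $\tau \in \SSSS^{c}_{\dv}$. Pair $(\gamma,\cv,i)$ with $(\gamma \circ \tau, \cv, i)$. Applying $\tau$ swaps only the last element of the $i$th block with the first element of the $(i+1)$st block in $\sigma(\alpha \circ \gamma, \cv)$, so after merging these two blocks the face is identical; meanwhile $(-1)^{\gamma \tau} = -(-1)^{\gamma}$ produces the needed sign flip. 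The two involutions together cancel every boundary contribution, so $\partial g_{\alpha,\dv} = 0$. The delicate step is the sign accounting; the two pairings exploit the two independent sign factors $(-1)^{\cv}$ and $(-1)^{\gamma}$ in $g_{\alpha,\dv}$, and the knapsack hypothesis enters (through the strict inequality $\lambda_{j,s} > \lambda_{j,s+1}$ within a chunk) to ensure that the type~(a) involution has no fixed points.
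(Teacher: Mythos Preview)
Your argument is correct and is essentially the paper's own proof: your type~(b) cancellation via the transposition $\tau=(p,p+1)\in\SSSS^{c}_{\dv}$ is exactly the paper's pairing $\gamma\leftrightarrow\gamma\circ(c_{1}+\cdots+c_{i},\,c_{1}+\cdots+c_{i}+1)$ in the case $\sum_{s\le i}c_{s}=\sum_{s\le j}d_{s}$, and your type~(a) cancellation via swapping $c_{i}$ and $c_{i+1}$ is the paper's pairing $\cv\leftrightarrow\cvp$ in the remaining case. Your explicit remark that the knapsack hypothesis forces $c_{i}\neq c_{i+1}$ within a chunk (so the type~(a) involution is fixed-point free) is a detail the paper leaves implicit.
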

\begin{proof}
Apply the boundary map $\partial$ to $g_{\alpha,\dv}$
in $C_{k-1}(\Lambda_{\{\lambda,\underline{m}\}})$
and exchange the order of the three sums
to obtain
$$
\partial(g_{\alpha,\dv})
=
  \sum_{i=1}^{k}
  (-1)^{i-1} \cdot
  \sum_{\gamma \in \SSSS^{c}_{\dv}}
  \sum_{\cv \in W(\dv)}
      (-1)^\gamma \cdot
      (-1)^{\cv} \cdot
        (\ldots, \{ \ldots, \alpha_{\gamma_{c_{1}+\cdots+c_{i}}},
        \alpha_{\gamma_{c_{1}+\cdots+c_{i}+1}}, \ldots\}, \ldots) .
$$
If $\sum_{s=1}^{i} c_{s}=\sum_{s=1}^{j} d_{s}$ for some $j$,
the term corresponding to $\gamma$ cancels
with
the term corresponding to
$\gamma \circ (c_{1}+\cdots+c_{i}, c_{1}+\cdots+c_{i}+1)$.
If there does not exist such an index $j$,
we find the smallest integer~$\ell$ such that
$\sum_{s=1}^{i} c_{s} < \sum_{s=1}^{\ell} d_{s}$.
Note that $d_{\ell}$ splits into $t_{\ell} \geq 2$ parts.
Now consider the composition
$\cvp = (\ldots, c_{i-1},c_{i+1},c_{i},c_{i+2}, \ldots)$
where we switched the $i$th and the $(i+1)$st parts of $\cv$.
Note that $\cvp$ also belongs to $W(\dv)$ and the sign
differs from the sign of $\cv$, that is,
$(-1)^{\cvp} = -(-1)^{\cv}$.
Then the term corresponding to $\cv$ cancels with
the term corresponding to $\cvp$.
Hence, the sum vanishes.
\end{proof}

\begin{lemma}
If the critical cell $\sigma(\alpha^{\prime},\epsilon(\dvp))$
belongs to the cycle $g_{\alpha,\dv}$,
then we have the inequality $\alpha^{\prime} \leq \alpha$
in the weak Bruhat order.
Furthermore, if
the two permutations $\alpha$ and $\alpha^{\prime}$
are equal, then the two compositions~$\dv$ and $\dvp$
are equal.
\label{lemma_knapsack_triangular}
\end{lemma}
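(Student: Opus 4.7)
The plan is to unpack what it means for the critical cell $\sigma(\alpha',\epsilon(\dvp))$ to appear as a summand of $g_{\alpha,\dv}$. By the definition of the cycle, this forces the existence of $\gamma \in \SSSS^{c}_{\dv}$ and $\cv \in W(\dv)$ such that the ordered set partitions $\sigma(\alpha \circ \gamma, \cv)$ and $\sigma(\alpha',\epsilon(\dvp))$ coincide as faces of $\Lambda_{\{\lambda,\underline{m}\}}$. Matching types of this face immediately forces $\cv = \epsilon(\dvp)$, so both descriptions name the same facet via different permutation labelings.

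Next I would identify $\alpha'$ as the $\sigma^{-1}$-minimal preimage of this facet. By the characterization in Lemma~\ref{lemma_cells}, $\alpha'$ has descent composition $\dvp$, and since $\epsilon(\dvp)$ is by construction a refinement of $\dvp$, each block of the facet $\sigma(\alpha',\epsilon(\dvp))$ is contained in some $\dvp$-interval on which $\alpha'$ is already increasing. Hence $\alpha'$ writes every block of the facet in increasing order, so $\alpha' = \sigma^{-1}(\sigma(\alpha \circ \gamma, \cv))$ and therefore $\alpha' \leq \alpha \circ \gamma$ in the weak Bruhat order. Invoking Lemma~\ref{lemma_Bruhat} with the standing hypothesis $\Des(\alpha) = \dv$ (which is the setting under which $g_{\alpha,\dv}$ is the intended cycle) gives $\alpha \circ \gamma \leq \alpha$, and concatenating the two inequalities yields $\alpha' \leq \alpha$.

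For the second assertion, I would note that if $\alpha = \alpha'$, then the chain $\alpha \leq \alpha \circ \gamma \leq \alpha$ collapses, forcing $\gamma$ to be the identity of $\SSSS^{c}_{\dv}$. Then applying Lemma~\ref{lemma_cells} to the critical cell gives $\dvp = \Des(\alpha') = \Des(\alpha) = \dv$, as desired.

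The main obstacle I expect is keeping the standing hypothesis $\Des(\alpha) = \dv$ explicit throughout the argument: without it, Lemma~\ref{lemma_Bruhat} does not apply and the cycle $g_{\alpha,\dv}$ fails to carry the intended triangular structure needed in the next section to extract a basis. A subtler point, worth stating carefully, is that the identification $\alpha' = \sigma^{-1}$ of the shared facet uses precisely that $\epsilon(\dvp)$ refines $\dvp$ rather than merely sharing the same underlying pointed type $\{\lambda,\underline{m}\}$.
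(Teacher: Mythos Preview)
Your proof is correct and follows essentially the same route as the paper: identify $\gamma$ and $\cv$ from the definition of $g_{\alpha,\dv}$, observe $\cv = \epsilon(\dvp)$ by comparing types, recognize $\alpha'$ as the Bruhat-minimal preimage of the shared facet so that $\alpha' \leq \alpha \circ \gamma$, and then invoke Lemma~\ref{lemma_Bruhat} to obtain $\alpha \circ \gamma \leq \alpha$; the second assertion then follows directly from $\dv = \Des(\alpha) = \Des(\alpha') = \dvp$. Your additional observation that $\gamma$ must be the identity when $\alpha = \alpha'$ is correct but unnecessary, and your care in making the standing hypothesis $\Des(\alpha) = \dv$ explicit is well placed, since the paper leaves this implicit in its use of Lemma~\ref{lemma_Bruhat}.
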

\begin{proof}
Since $\sigma(\alpha^{\prime},\epsilon(\dvp))$ belongs to $g_{\alpha,\dv}$,
we have $\sigma(\alpha^{\prime},\epsilon(\dvp))=\sigma(\alpha \circ \gamma,\cv)$
for some $\gamma \in \mathfrak{S}'_{\dv}$ and $\cv \in W(\dv)$.
Note that $\epsilon(\dvp) = \cv$.
Hence a permutation $\alpha^{\prime}$
corresponding to the critical cell $\sigma(\alpha^{\prime},\epsilon(\dvp))$
is obtained
by writing each block of $\sigma(\alpha \circ \gamma,\cv)$ in increasing
order
and this implies $\alpha^{\prime} \leq \alpha \circ \gamma$.
Furthermore, $\alpha \circ \gamma \leq \alpha$ in the weak Bruhat order by
Lemma~\ref{lemma_Bruhat}.
Hence, we have $\alpha^{\prime} \leq \alpha$.

Finally, if the two permutations $\alpha$ and $\alpha^{\prime}$
are equal, we have that
$\dv = \Des(\alpha) = \Des(\alpha^{\prime}) = \dvp$.
\end{proof}

Using Lemma~\ref{lemma_knapsack_triangular}
we have the next result.
Observe that the proof is similar to that of Theorem~\ref{theorem_homology}
and hence omitted.

\begin{theorem}
For a pointed knapsack partition
$\{ \lambda, \underline{m} \}$ of $n$,
the cycles $g_{\alpha,\dv}$ where
the composition~$\dv$ ranges over the set $V(\lambda, \underline{m})$
and
$\alpha$ ranges over
all permutations with descent composition~$\dv$,
form a basis for the homology group
$\HH_{k-1}(\Lambda_{\{\lambda,\underline{m}\}})$.
\end{theorem}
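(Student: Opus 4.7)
The plan is to parallel the proof of Theorem~\ref{theorem_homology}, with the discrete Morse matching of Theorem~\ref{theorem_knapsack} playing the role previously played by the shelling. First I would note that the number of cycles $g_{\alpha,\dv}$ we have constructed equals $\sum_{\dv \in V(\lambda, \underline{m})} \beta(\dv)$, which by Theorem~\ref{theorem_knapsack} is precisely the rank of $\HH_{k-1}(\Lambda_{\{\lambda,\underline{m}\}})$. Hence it suffices to show these cycles are linearly independent.

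To produce a convenient target basis, I would invoke the cellular collapse coming from the Morse matching to obtain a continuous map $f : \Lambda_{\{\lambda,\underline{m}\}} \longrightarrow X$ onto a CW complex $X$ with one $0$-cell and one $(k-1)$-cell per critical cell $\sigma(\alpha, \epsilon(\dv))$. Then $X$ is a wedge of $(k-1)$-spheres, $f$ is a homotopy equivalence, and the fundamental cycles $h_{\alpha,\dv}$ of these spheres form a basis for $\HH_{k-1}(X)$. The induced map $f_{*}$ is then an isomorphism between $\HH_{k-1}(\Lambda_{\{\lambda,\underline{m}\}})$ and $\HH_{k-1}(X)$.

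Next I would expand $f_{*}(g_{\alpha,\dv})$ in the basis $\{h_{\alpha',\dv'}\}$. Only critical cells appearing in $g_{\alpha,\dv}$ with nonzero coefficient can contribute. By Lemma~\ref{lemma_knapsack_triangular}, any such critical cell $\sigma(\alpha', \epsilon(\dv'))$ satisfies $\alpha' \leq \alpha$ in the weak Bruhat order, and equality of permutations forces equality of the compositions. A direct check confirms that $\sigma(\alpha, \epsilon(\dv))$ itself appears in $g_{\alpha, \dv}$ with coefficient exactly $1$: since $\dv = \Des(\alpha)$, the row stabilizer of $\epsilon(\dv)$ meets the column stabilizer $\SSSS^{c}_{\dv}$ only at the identity, so the unique pair $(\gamma, \cv) \in \SSSS^{c}_{\dv} \times W(\dv)$ producing this critical cell is $(e, \epsilon(\dv))$, contributing sign $(-1)^{e} \cdot (-1)^{\epsilon(\dv)} = 1$. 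Ordering the pairs $(\alpha, \dv)$ by any linear extension of the weak Bruhat order on the first coordinate, the transition matrix from $\{h_{\alpha',\dv'}\}$ to $\{f_{*}(g_{\alpha,\dv})\}$ is therefore unitriangular, hence invertible. It follows that $\{f_{*}(g_{\alpha,\dv})\}$ is a basis of $\HH_{k-1}(X)$, and since $f_{*}$ is an isomorphism the cycles $g_{\alpha,\dv}$ form a basis of $\HH_{k-1}(\Lambda_{\{\lambda,\underline{m}\}})$.

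The main obstacle is justifying the existence of the homotopy equivalence $f$ with controlled behavior on critical cells. In the shellable setting of Theorem~\ref{theorem_homology} one simply contracts the union of non-spanning facets, but in the Morse-theoretic setting one must appeal to the cellular collapse produced by the acyclic matching (as in the standard proofs in~\cite{Forman_2,Kozlov}) to guarantee that the fundamental cycle of each $(k-1)$-sphere in $X$ really corresponds at the chain level to the critical cell $\sigma(\alpha, \epsilon(\dv))$ in a manner compatible with the triangularity argument above.
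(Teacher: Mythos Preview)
Your proposal is correct and follows precisely the approach the paper intends: the paper omits this proof, saying only that it is ``similar to that of Theorem~\ref{theorem_homology}'' using Lemma~\ref{lemma_knapsack_triangular}, and you have supplied exactly those details, including the appropriate adaptation from the shelling contraction to the Morse-theoretic cellular collapse. Your observation that the diagonal coefficient is~$1$ because $\SSSS_{\epsilon(\dv)} \cap \SSSS^{c}_{\dv} = \{e\}$ (since $\epsilon(\dv) \leq \dv$ forces $\SSSS_{\epsilon(\dv)} \subseteq \SSSS_{\dv}$) is the right justification and matches the paper's remark preceding the lemma that ``the critical cell $\sigma(\alpha, \epsilon(\dv))$ has sign~$1$ in $g_{\alpha,\dv}$.''
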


\section{The group action on the top homology}
\label{section_knapsack_group}

By the equivariant homology version of
Quillen's Fiber Lemma,
Theorem~\ref{theorem_equivariant_Quillen},
we have the following isomorphism.
\begin{theorem}
The two homology groups
$\HH_{k-1}\left(\Delta\left(\Pi^{\bullet}_{\{\lambda,\underline{m}\}} - \{\ho\}\right)\right)$
and
$\HH_{k-1}\left(\Lambda_{\{\lambda,\underline{m}\}}\right)$
are isomorphic as $\SSSS_{n}$-modules.
\label{theorem_knapsack_homology_modules}
\end{theorem}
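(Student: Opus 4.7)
The plan is to imitate the argument of Section~\ref{section_group}: apply the equivariant Quillen's Fiber Lemma (Theorem~\ref{theorem_equivariant_Quillen}) to the forgetful map, exactly as was done for the pair $(\Delta_{\cv}, \Pi^{\bullet}_{\cv})$ in Proposition~\ref{proposition_homology_modules}. In fact, the non-equivariant version of this homotopy equivalence has already been established in Theorem~\ref{theorem_pointed_partitions_2}, so the only remaining task is to track the action of~$\SSSS_{n}$ through the argument.

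Concretely, first I would restrict the forgetful map $\phi : \Delta_{n} \longrightarrow \Pi^{\bullet}_{n}$ of Section~\ref{section_Quillen} to the subcomplex $\Lambda_{\{\lambda,\underline{m}\}}$ and observe that its image lies in the filter $\Pi^{\bullet}_{\{\lambda,\underline{m}\}}$; removing the top element this yields a simplicial map
$$ \phi : \Lambda_{\{\lambda,\underline{m}\}} \longrightarrow \Pi^{\bullet}_{\{\lambda,\underline{m}\}} - \{\ho\} . $$
The symmetric group $\SSSS_{n}$ acts on both sides by relabeling elements of~$[n]$. Since $\phi$ sends $(C_{1},\ldots,C_{r})$ to $\{C_{1},\ldots,C_{r-1},\underline{C_{r}}\}$ by a rule that depends only on the underlying sets (and on which block is last), the action commutes with $\phi$; that is, $\phi$ is an $\SSSS_{n}$-simplicial map.

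Second, I would verify the fiber hypothesis. For every $\pi \in \Pi^{\bullet}_{\{\lambda,\underline{m}\}} - \{\ho\}$, the subcomplex
$$ \phi^{-1}\!\left(\left(\Pi^{\bullet}_{\{\lambda,\underline{m}\}} - \{\ho\}\right)_{\geq \pi}\right) $$
is precisely the one analyzed in Lemma~\ref{lemma_Omega}: its faces are the ordered partitions refining $\pi$ whose last block contains the pointed block of $\pi$. The same coning argument (adjoining the apex $([n]-B_{m},B_{m})$) shows this complex is contractible, hence acyclic; this argument is intrinsic to $\pi$ and does not use the hypothesis that the type is refinable to a single composition $\cv$, so it carries over verbatim to the knapsack setting.

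Third, applying Theorem~\ref{theorem_equivariant_Quillen} to $\phi$ yields $\SSSS_{n}$-module isomorphisms $\HH_{r}(\Lambda_{\{\lambda,\underline{m}\}}) \cong \HH_{r}(\Delta(\Pi^{\bullet}_{\{\lambda,\underline{m}\}} - \{\ho\}))$ in every degree, in particular in degree $k-1$, which is the claim. The proof is essentially a direct transcription of the argument for Proposition~\ref{proposition_homology_modules}, and I do not see any genuine obstacle: once Lemma~\ref{lemma_Omega} is reused and the equivariance of $\phi$ is noted, the equivariant Quillen Fiber Lemma closes the proof. If anything, the only point worth writing out carefully is that $\phi$ restricts to $\Lambda_{\{\lambda,\underline{m}\}}$ with image in $\Pi^{\bullet}_{\{\lambda,\underline{m}\}}$, which is immediate from the definition of $\Lambda_{\{\lambda,\underline{m}\}}$.
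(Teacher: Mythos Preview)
Your proposal is correct and matches the paper's approach exactly: the paper states this theorem with a one-line justification, citing the equivariant Quillen Fiber Lemma (Theorem~\ref{theorem_equivariant_Quillen}) and implicitly relying on precisely the equivariance of $\phi$ and the fiber contractibility (Lemma~\ref{lemma_Omega}) that you spell out. The only cosmetic point is that, as in Proposition~\ref{proposition_homology_modules} and Theorem~\ref{theorem_pointed_partitions_2}, the simplicial map should have domain $\sd(\Lambda_{\{\lambda,\underline{m}\}})$ rather than $\Lambda_{\{\lambda,\underline{m}\}}$ itself, after which one passes back to $\Lambda_{\{\lambda,\underline{m}\}}$ via the $\SSSS_{n}$-equivariant homeomorphism with its barycentric subdivision.
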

Hence it remains to make the connection between
the action on ordered set partitions and Specht modules.
\begin{theorem}
The direct sum of Specht modules
$$ \bigoplus_{\dv \in V(\lambda, \underline{m})} S^{B(\dv)} $$
is isomorphic to the top homology group
$\HH_{k-1}\left(\Lambda_{\{\lambda,\underline{m}\}}\right)$
as $\SSSS_{n}$-modules.
\end{theorem}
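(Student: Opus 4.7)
The plan is to follow the strategy of Proposition~\ref{proposition_Specht}, constructing an explicit isomorphism by exhibiting, for each $\dv \in V(\lambda,\underline{m})$, an $\SSSS_{n}$-module homomorphism
$$\Phi_{\dv} : S^{B(\dv)} \longrightarrow \HH_{k-1}\left(\Lambda_{\{\lambda,\underline{m}\}}\right)$$
that sends each polytabloid $\ev_{t}$ to the cycle $g_{\alpha_{t},\dv}$, where $\alpha_{t} \in \SSSS_{n}$ is the permutation obtained from the tableau $t$ of shape $B(\dv)$ by reading its entries in the northeast direction, as in Section~\ref{section_group}. That $g_{\alpha_{t},\dv}$ represents a class in the top homology is precisely the cycle lemma in Section~\ref{section_knapsack_cycles}.

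Two verifications are needed for each $\Phi_{\dv}$. First, well-definedness requires that the assignment $\ev_{t} \mapsto g_{\alpha_{t},\dv}$ respects the Specht-module relations. The column stabilizer antisymmetry $\ev_{\gamma t} = (-1)^{\gamma}\, \ev_{t}$ for $\gamma \in \SSSS_{\dv}^{c}$ translates into the identity $g_{\alpha_{t} \circ \gamma, \dv} = (-1)^{\gamma}\, g_{\alpha_{t}, \dv}$, which is immediate after re-indexing the double sum defining $g_{\alpha_{t},\dv}$. The Garnir relations require a more intricate argument analogous to the pairwise cancellation used in the proof that $g_{\alpha,\dv}$ is a cycle: for each pair of columns sharing elements to be exchanged, the transpositions within $\SSSS_{\dv}^{c}$ produce terms that cancel in pairs in the inner sum, uniformly across $\cv \in W(\dv)$. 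Second, $\SSSS_{n}$-equivariance is direct: relabeling by $\beta \in \SSSS_{n}$ sends $\ev_{t}$ to $\ev_{\beta t}$ with $\alpha_{\beta t} = \beta \circ \alpha_{t}$, and the action on chains is by relabeling entries, so $\Phi_{\dv}(\beta \cdot \ev_{t}) = g_{\beta \circ \alpha_{t}, \dv} = \beta \cdot g_{\alpha_{t},\dv}$.

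Next, I would assemble the maps into the $\SSSS_{n}$-equivariant homomorphism
$$\Phi = \bigoplus_{\dv \in V(\lambda,\underline{m})} \Phi_{\dv} : \bigoplus_{\dv \in V(\lambda,\underline{m})} S^{B(\dv)} \longrightarrow \HH_{k-1}\left(\Lambda_{\{\lambda,\underline{m}\}}\right).$$
To show $\Phi$ is an isomorphism, I would invoke the basis theorem at the end of Section~\ref{section_knapsack_cycles}: the cycles $g_{\alpha,\dv}$ with $\dv \in V(\lambda,\underline{m})$ and $\Des(\alpha) = \dv$ form a basis of the top homology. Since standard tableaux of shape $B(\dv)$ correspond via the reading bijection to permutations $\alpha$ with $\Des(\alpha) = \dv$, and the corresponding polytabloids $\ev_{t}$ form a basis of $S^{B(\dv)}$, the map $\Phi$ sends a basis of the domain bijectively to a basis of the codomain, yielding the isomorphism. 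The total dimension count $\sum_{\dv \in V(\lambda,\underline{m})} \beta(\dv)$ on both sides is guaranteed by Theorem~\ref{theorem_knapsack}.

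The main obstacle is the verification of the Garnir relations for the cycles $g_{\alpha_{t},\dv}$. Because $g_{\alpha,\dv}$ contains an additional sum over $\cv \in W(\dv)$ compared to the single-composition case of Section~\ref{section_group}, one must track how a Garnir exchange on tableaux interacts with the refinement compositions~$\cv$. The key observation should be that the column stabilizer $\SSSS_{\dv}^{c}$ acts uniformly across the sum indexed by $\cv$, so the cancellation occurs term-by-term in $\cv$; nevertheless the careful sign bookkeeping, involving both the signs $(-1)^{\gamma}$ and $(-1)^{\cv}$, is what makes this the delicate step.
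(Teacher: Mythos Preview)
Your overall strategy---building an $\SSSS_{n}$-equivariant map that sends polytabloids to the cycles $g_{\alpha,\dv}$ and then concluding via the basis theorem and dimension count---is the same as the paper's. The difference lies in how the map is set up, and the paper's route sidesteps precisely the obstacle you flag as ``delicate.''

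Rather than defining $\Phi_{\dv}$ only on polytabloids (which forces you to verify that all linear relations among polytabloids, in particular the Garnir relations, are respected), the paper defines the map on the full permutation module $M^{B(\dv)}$ first. A tabloid $s$ of shape $B(\dv)$, with associated permutation $\alpha$ obtained by reading each row in increasing order, is sent to the chain $\sum_{\cv \in W(\dv)} (-1)^{\cv}\,\sigma(\alpha,\cv)$ in $C_{k-1}(\Lambda_{\{\lambda,\underline{m}\}})$. Since tabloids form a \emph{basis} of $M^{B(\dv)}$, this assignment is automatically well-defined and $\SSSS_{n}$-equivariant; no relations need checking. One then simply computes that the image of a polytabloid $\ev_{t}$ under this map is exactly $g_{\alpha_{t},\dv}$, which is a cycle by the lemma in Section~\ref{section_knapsack_cycles}. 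Hence the restriction of the map to the submodule $S^{B(\dv)} \subseteq M^{B(\dv)}$ lands in the homology, with well-definedness for free.

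So your proposal is not incorrect, but the Garnir-relation verification you identify as the main obstacle is entirely avoidable. Defining the map at the level of tabloids rather than polytabloids gives the same homomorphism without that work.
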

\begin{proof}
We define a homomorphism
$$ \Psi :
   \bigoplus_{\dv \in V(\lambda, \underline{m})} M^{B(\dv)}
   \longrightarrow
   C_{k-1}\left(\Lambda_{\{\lambda,\underline{m}\}}\right)  ,  $$
by sending a tabloid $s$ of shape $B(\dv)$ to
the element $\sum_{\cv \in W(\dv)} (-1)^{\cv} \sigma(\alpha, \cv)$
in the chain group, where $\alpha$ is the permutation
obtained from the tabloid $s$
by reading the elements in each row in increasing order.
Observe that $\Psi$ is a $\SSSS_{n}$-module homomorphism.

Now consider the restriction of the homomorphism $\Psi$ to
the direct sum of Specht modules.
Note that
the group~$\SSSS_{\dv}^{c}$
is the column stabilizer of the border strip $B(\dv)$.
Let $t$ be a tableau and $\alpha$ its associated 
permutation.
The homomorphism $\Psi$ applied to the polytabloid ${\bf e}_{t}$
(see reference~\cite[Definition~2.3.2]{Sagan})
is
as follows:
$$    \Psi({\bf e}_{t})
    =
       \sum_{\gamma \in \SSSS_{\dv}^{c}}
          (-1)^{\gamma}
            \cdot
      \sum_{\cv \in W(\dv)}
          (-1)^{\cv} \cdot \sigma(\alpha \circ \gamma, \cv)
   =
      g_{\alpha, \dv}  ,  $$
which belongs to the kernel of the boundary map.
Hence $\Psi$ maps the directed sum of the Specht modules
to the homology group
$\HH_{k-1}\left(\Lambda_{\{\lambda,\underline{m}\}}\right)$.

Since $g_{\alpha,\dv}$ lies in the image of the restriction of $\Psi$
and the elements $g_{\alpha,\dv}$ span the homology group,
the restriction is surjective. Furthermore,
since the two $\SSSS_{n}$-modules have the same dimension,
we conclude that they are isomorphic.
\end{proof}

Hence we conclude
\begin{theorem}
The top homology group
$\HH_{k-1}\left(\Delta\left(\Pi^{\bullet}_{\{\lambda,\underline{m}\}} - \{\ho\}\right)\right)$
and
the direct sum of Specht modules
$$ \bigoplus_{\dv \in V(\lambda, \underline{m})} S^{B(\dv)} $$
are isomorphic as $\SSSS_{n}$-modules.
\label{theorem_Specht_2}
\end{theorem}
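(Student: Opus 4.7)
The plan is to deduce this result by chaining together the two isomorphisms established immediately above. First, Theorem~\ref{theorem_knapsack_homology_modules} gives an $\SSSS_n$-module isomorphism between $\HH_{k-1}\!\left(\Delta\left(\Pi^{\bullet}_{\{\lambda,\underline{m}\}} - \{\ho\}\right)\right)$ and $\HH_{k-1}\!\left(\Lambda_{\{\lambda,\underline{m}\}}\right)$, obtained from the equivariant version of Quillen's Fiber Lemma applied to the forgetful map $\phi$ (which is $\SSSS_n$-equivariant because relabeling commutes with forgetting the block order and distinguishing the pointed block). Second, the preceding theorem identifies $\HH_{k-1}\!\left(\Lambda_{\{\lambda,\underline{m}\}}\right)$ with $\bigoplus_{\dv \in V(\lambda,\underline{m})} S^{B(\dv)}$, via the explicit map $\Psi$ sending each polytabloid ${\bf e}_{t}$ to the cycle $g_{\alpha,\dv}$.

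So my proof would simply say: composing these two $\SSSS_n$-module isomorphisms yields the claimed isomorphism. No further computation is required, since both intermediate statements already encode equivariance.

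I do not anticipate any real obstacle in this step; the work is fully absorbed into Theorems~\ref{theorem_knapsack_homology_modules} and the Specht identification just proved. The only minor point worth mentioning explicitly is that $\sd(\Lambda_{\{\lambda,\underline{m}\}})$ and $\Lambda_{\{\lambda,\underline{m}\}}$ have canonically isomorphic reduced homology as $\SSSS_n$-modules, so passing from the barycentric subdivision appearing in Theorem~\ref{theorem_pointed_partitions_2} back to $\Lambda_{\{\lambda,\underline{m}\}}$ itself causes no loss. The entire proof can therefore be written in one or two sentences, along the lines of: ``Combining Theorem~\ref{theorem_knapsack_homology_modules} with the preceding theorem yields the desired $\SSSS_n$-module isomorphism.''
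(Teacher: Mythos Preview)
Your proposal is correct and matches the paper's approach exactly: the paper does not give a separate proof but simply writes ``Hence we conclude'' before stating Theorem~\ref{theorem_Specht_2}, indicating it follows immediately by composing Theorem~\ref{theorem_knapsack_homology_modules} with the preceding Specht-module identification. Your remark about the barycentric subdivision is a harmless clarification that the paper leaves implicit.
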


\section{Concluding remarks}

We have not dealt with the question
whether the poset $\Pi^{\bullet}_{\cv}$
is $EL$-shellable. Recall that Wachs
proved that the $d$-divisible partition lattice
$\Pi^{d}_{n} \cup \{\hz\}$
has an $EL$-labeling.
Ehrenborg and Readdy
gave an extension of this labeling
to prove that
$\Pi^{\bullet}_{(d, \ldots, d, m)}$
is $EL$-shellable~\cite{Ehrenborg_Readdy_II}.
Woodroofe~\cite{Woodroofe}
has shown that the order complex
$\Delta(\Pi^{d}_{n} - \{\ho\})$ has a convex ear decomposition.
This is not true in general for
$\Delta(\Pi^{\bullet}_{\cv} - \{\ho\})$.
See for instance
$\Delta(\Pi^{\bullet}_{(1,1,1)} - \{\ho\})$
in Figure~\ref{figure_pointed_hexagon}.

Can the order complex
$\Delta(\Pi^{\bullet}_{\cv} - \{\ho\})$
be shown to be shellable, using the fact that
the complex $\Delta_{\cv}$ is shellable?
That is, can a shelling of
$\Delta_{\cv}$ be lifted, similar to using
Quillen's Fiber Lemma?
Would this shelling relationship
also extend to
the two complexes
$\Lambda_{\{\lambda,\underline{m}\}}$
and
$\Delta\left(\Pi_{\{\lambda,\underline{m}\}} - \{\ho\}\right)$?

Kozlov~\cite{Kozlov_paper} introduced the notion of
a poset to be $EC$-shellable.
He showed
that the filter $\Pi_{\lambda}$ in the partition lattice
generated by a knapsack partition $\lambda$ is
$EC$-shellable~\cite[Theorem~4.1]{Kozlov_paper}.
Is it possible to
show that $\Pi^{\bullet}_{\cv}$
and $\Pi^{\bullet}_{\{\lambda,\underline{m}\}}$
are $EC$-shellable?
Furthermore, Kozlov computes the M\"obius function
of $\Pi_{\lambda}$.
See~\cite[Corollary~8.5]{Kozlov_paper}.
Can his
answer be expressed in terms of permutation statitics,
such as the descent set statistic?

Can these techniques be used for studying
other subposets of the partition lattice?
One such subposet is the odd partition lattice,
that is, the collection of all partitions where
each block size is odd. More generally,
what can be said about the case when all
the block sizes are congruent to $r$ modulo $d$?
These posets have been studied
in~\cite{Calderbank_Hanlon_Robinson}
and~\cite{Wachs_II}.
Moreover, what can be said about the poset
$\Pi^{\bullet}_{\{\lambda,\underline{m}\}}$
when $\{\lambda,\underline{m}\}$ is not
a pointed knapsack partition?

Another analogue of the partition lattice
is the Dowling lattice. Subposets
of the Dowling lattice have been
studied in~\cite{Ehrenborg_Readdy_II}
and~\cite{Gottlieb_I,Gottlieb_Wachs}.
Here the first question to ask is:
what is the right analogue of ordered set partitions?

\begin{figure}[t]
\setlength{\unitlength}{0.5mm}
\begin{center}
\begin{picture}(120,104)(0,0)

\put(0,2){\circle*{3}}
\put(60,104){\circle*{3}}
\put(120,2){\circle*{3}}

\put(30,54){\circle*{3}}
\put(90,54){\circle*{3}}
\put(60,2){\circle*{3}}

\put(30,17.4){\circle*{3}}
\put(60,69.4){\circle*{3}}
\put(90,17.4){\circle*{3}}

\thicklines

\put(30,17.4){\line(2,-1){30}}
\put(30,17.4){\line(-2,-1){30}}
\put(30,17.4){\line(0,1){36.6}}

\put(90,17.4){\line(2,-1){30}}
\put(90,17.4){\line(-2,-1){30}}
\put(90,17.4){\line(0,1){36.6}}

\put(60,69.4){\line(2,-1){30}}
\put(60,69.4){\line(-2,-1){30}}
\put(60,69.4){\line(0,1){34.6}}

%% Vertices:
\put(-11,-5){$13|\underline{2}$}
\put(121,-4){$12|\underline{3}$}
\put(56,107){$23|\underline{1}$}

\put(56,-5){$1|\underline{23}$}
\put(92,51){$2|\underline{13}$}
\put(13,51){$3|\underline{12}$}

\put(11,17.4){$1|3|\underline{2}$}
\put(92,17.4){$1|2|\underline{3}$}
\put(63,69.4){$2|3|\underline{1}$}

\end{picture}
\end{center}
\caption{The order complex of the poset
$\Pi^{\bullet}_{(1,1,1)} - \{\ho\}$.}
\label{figure_pointed_hexagon}
\end{figure}
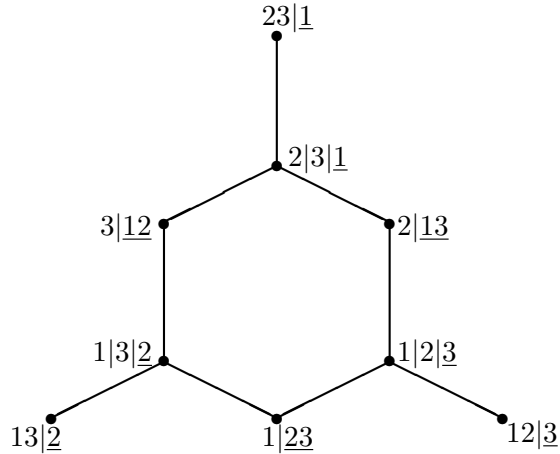

Wachs gave a basis for the top homology of
the order complex of the $d$-divisible partition
lattice~\cite[Section~2]{Wachs}.
Each cycle in her basis is
the barycentric subdivision of the boundary of a cube.
We can similarly describe a basis for
the order complex of $\Pi^{\bullet}_{\cv}$.
The major difference is that
the cycles are the barycentric
subdivision of a different polytope
depending on the composition $\cv$.
In order to describe this polytope,
recall that the $n$-dimensional root polytope $R_{n}$
(of type $A$) is defined
as the intersection of the $(n+1)$-dimensional crosspolytope
$\conv(\{\pm2 \ev_{i}\}_{1 \leq i \leq n+1})$ and
the hyperplane $x_{1} + x_{2} + \cdots + x_{n+1} = 0$.
Equivalently, the root polytope $R_{n}$ can be defined as
the convex hull of the set
$\{\ev_{i} - \ev_{j}\}_{1 \leq i, j \leq n+1}$.
Lastly, let $S_{n}$ denote the $n$-dimensional simplex.

We state the following theorem without proof.
\begin{theorem}
Given a composition $\cv$,
there is a basis for
$\HH_{k-2}(\Delta(\Pi^{\bullet}_{\cv} - \{\ho\}))$
where each basis element is the barycentric subdivision
of the boundary of the Cartesian product:
$$
\begin{array}{c l}
R_{|K_{1}|-1} \times R_{|K_{2}|-1} \timesdots
R_{|K_{n-k}|-1} \times S_{|K_{n-k+1}|-1}
& \text{ if } c_{1} \neq 1, \\
S_{|K_{1}|-1} \times R_{|K_{2}|-1} \timesdots
R_{|K_{n-k}|-1} \times S_{|K_{n-k+1}|-1}
& \text{ if } c_{1} = 1.
\end{array}
$$
Note that when all the parts of the composition $\cv$
are greater than $1$, this polytope reduces to
the $(k-1)$-dimensional cube.
\end{theorem}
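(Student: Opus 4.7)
The plan is to construct, for each permutation $\alpha$ in $\SSSS_{n}$ with descent composition $\cv$, an explicit cycle $\zeta_{\alpha}$ in the order complex $\Delta(\Pi^{\bullet}_{\cv} - \{\ho\})$ whose underlying subcomplex is simplicially isomorphic to the barycentric subdivision of the boundary of the Cartesian product polytope of the theorem, denoted $P(\cv)$ for brevity, and then to verify that the family $\{\zeta_{\alpha}\}$ spans the top homology group. The starting point is Lemma~\ref{lemma_sphere_I}, which identifies the support $\Sigma_{\alpha}$ of the cycle $g_{\alpha}$ in $\Delta_{\cv}$ as the join $P_{|K_{1}|} \ast \cdots \ast P_{|K_{n-k+1}|}$, together with the observation from Section~\ref{section_Delta} that $P_{m}$ is already the barycentric subdivision of the boundary of the simplex $S_{m-1}$, whose $m$ vertices we identify with positions in the column $K_{i}$ of the border strip. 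Many of the $R_{|K_{i}|-1}$ factors in the theorem reduce to a single point, precisely when $|K_{i}|=1$, so the substance of the theorem concerns the shared columns where $|K_{i}| \geq 2$, together with the simplex factor coming from the last column (and from the first when $c_{1}=1$).

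To pass from the join description of $\Sigma_{\alpha}$ to the barycentric subdivision of $\partial P(\cv)$, I would set up an order-preserving map $\pi_{\alpha}$ from the face poset of $\partial P(\cv)$ into $\Pi^{\bullet}_{\cv}$, column by column. Any face of $P(\cv)$ decomposes as $F_{1} \times \cdots \times F_{n-k+1}$ with $F_{i}$ a face of the $i$-th factor; the choice of $F_{i}$ dictates how the values $\{\alpha_{s} : s \in K_{i}\}$ in column $K_{i}$ are redistributed across the rows that the column meets. For a simplex factor $S_{|K_{i}|-1}$, the face $F_{i}$ is a non-empty subset of $K_{i}$ recording which elements are designated to merge (into the pointed block for the last column, or into the singleton first block when $c_{1}=1$). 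For a root polytope factor $R_{|K_{i}|-1}$ of an interior shared column meeting two rows, faces admit the classical description as pairs of disjoint non-empty subsets of $K_{i}$, which we use to record how the column's elements split between the upper and lower rows while blocks of the resulting pointed partition merge. Assembling these column-wise choices defines $\pi_{\alpha}$, and one verifies that $F \subseteq G$ implies $\pi_{\alpha}(F) \leq \pi_{\alpha}(G)$, so the induced simplicial map $\sd(\partial P(\cv)) \to \Delta(\Pi^{\bullet}_{\cv} - \{\ho\})$ is well-defined; its image is again $\sd(\partial P(\cv))$, and its fundamental cycle is $\zeta_{\alpha}$.

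To conclude that the cycles $\{\zeta_{\alpha}\}$ form a basis, I would mimic the triangularity argument in the proof of Theorem~\ref{theorem_homology}: the top-dimensional cell of $\zeta_{\alpha}$, coming from the fundamental face of $P(\cv)$, corresponds to the chain containing the pointed partition $\phi(\sigma(\alpha, \cv))$, and by an analogue of Lemma~\ref{lemma_inequality} this cell does not appear in $\zeta_{\alpha'}$ for any $\alpha' > \alpha$ in the weak Bruhat order. Combined with the dimension count $\beta(\cv)$ from Proposition~\ref{proposition_homology_modules} and Theorem~\ref{theorem_homology}, this implies the $\zeta_{\alpha}$ span the top homology. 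The main obstacle will be the root polytope portion of the correspondence $F \mapsto \pi_{\alpha}(F)$: faces of $R_{m}$ have a richer combinatorial structure than faces of the simplex, and verifying both that the map is order preserving and that it is compatible with the boundary map requires careful sign tracking and case analysis, particularly for longer interior columns arising from runs of interior parts equal to $1$ in $\cv$. Once this correspondence is pinned down, the homology statement and the dichotomy between the cases $c_{1}=1$ and $c_{1} \neq 1$ should fall out cleanly.
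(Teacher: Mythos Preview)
The paper explicitly states this theorem \emph{without proof}: the sentence immediately preceding it in the concluding remarks reads ``We state the following theorem without proof.'' There is therefore no argument in the paper to compare your proposal against.

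That said, your outline is a reasonable plan of attack and is in the spirit of the methods the paper does develop: starting from the join description of $\Sigma_{\alpha}$ in Lemma~\ref{lemma_sphere_I}, building an order-preserving map from the face poset of $\partial P(\cv)$ into $\Pi^{\bullet}_{\cv}-\{\ho\}$, and then invoking a Bruhat-order triangularity argument as in Theorem~\ref{theorem_homology} to conclude the cycles form a basis. Your observation that the proper faces of the type~$A$ root polytope $R_{m}$ are indexed by ordered pairs $(I,J)$ of disjoint non-empty subsets of $[m+1]$, with the corresponding face a product of simplices, is the right combinatorial handle for the column-wise construction. The genuine work, as you note, lies in the columns with $|K_{i}| \geq 3$ (runs of interior parts equal to~$1$ in $\cv$): there you must check carefully that the assignment $F_{i} \mapsto$ (redistribution of $\{\alpha_{s}:s\in K_{i}\}$ among the rows meeting column $K_{i}$) is order-preserving and lands in $\Pi^{\bullet}_{\cv}$, and that the induced map on barycentric subdivisions is injective on top-dimensional faces so that the fundamental cycle really is carried to a nonzero cycle. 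None of this is in the paper, so you are on your own for the details.
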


In a recent preprint Miller~\cite{Miller} has extended the definition
of the partition poset $\Pi^{\bullet}_{\cv}$ from type $A$
to all real reflection groups and
the complex reflection groups known as Shephard groups.
Can his techniques also extend
our results for the filter
$\Pi^{\bullet}_{\{\lambda, \underline{m}\}}$
where
$\{\lambda, \underline{m}\}$
is a knapsack partition?

\section*{Acknowledgements}

The authors thank Serge Ochanine for many helpful discussions
and Margaret Readdy, Michelle Wachs and the two referees for their comments on an earlier draft of this paper.
Both authors are partially supported by
National Science Foundation grant DMS-0902063.
The first author also thanks the Institute for Advanced
Study and is also partially supported by
National Science Foundation grants
DMS-0835373 and CCF-0832797.

\newcommand{\journal}[6]{{\sc #1,} #2, {\it #3} {\bf #4} (#5), #6.}
\newcommand{\book}[4]{{\sc #1,} ``#2,'' #3, #4.}
\newcommand{\bookf}[5]{{\sc #1,} ``#2,'' #3, #4, #5.}
\newcommand{\books}[6]{{\sc #1,} ``#2,'' #3, #4, #5, #6.}
\newcommand{\collection}[6]{{\sc #1,}  #2, #3, in {\it #4}, #5, #6.}
\newcommand{\thesis}[4]{{\sc #1,} ``#2,'' Doctoral dissertation, #3, #4.}
\newcommand{\springer}[4]{{\sc #1,} ``#2,'' Lecture Notes in Math., Vol.\ #3,
Springer-Verlag, Berlin, #4.}
\newcommand{\preprint}[3]{{\sc #1,} #2, preprint #3.}
\newcommand{\preparation}[2]{{\sc #1,} #2, in preparation.}
\newcommand{\appear}[3]{{\sc #1,} #2, to appear in {\it #3}}
\newcommand{\submitted}[3]{{\sc #1,} #2, submitted to {\it #3}}
\newcommand{\JCTA}{J.\ Combin.\ Theory Ser.\ A}
\newcommand{\AdvancesinMathematics}{Adv.\ Math.}
\newcommand{\JournalofAlgebraicCombinatorics}{J.\ Algebraic Combin.}

\bigskip
\noindent
{\em R.\ Ehrenborg and J.\ Jung,
    Department of Mathematics,
    University of Kentucky,
    Lexington, KY~40506},
{\verb+{jrge,jjung}@ms.uky.edu+}.

\end{document}